\begin{document}

\newtheorem{theorem}{Theorem}[section]
\newtheorem{definition}[theorem]{Definition}
\newtheorem{lemma}[theorem]{Lemma}
\newtheorem{proposition}[theorem]{Proposition}
\newtheorem{problem}[theorem]{Problem}
\newtheorem{conjecture}[theorem]{Conjecture}
\newtheorem{question}[theorem]{Question}

\title{Free Bessel laws}
\author{T. Banica}
\address{T.B.: Department of Mathematics, Toulouse 3 University, 118 route de Narbonne, 31062 Toulouse, France. {\tt banica@picard.ups-tlse.fr}}
\author{S.T. Belinschi}
\address{S.T.B.: Department of Mathematics, University of Saskatchewan, 106 Wiggins Road, Saskatoon, SK S7N 5E6, Canada. {\tt belinsch@math.usask.ca}}
\author{M. Capitaine}
\address{M.C.: Department of Mathematics, Toulouse 3 University, 118 route de Narbonne, 31062 Toulouse, France. {\tt capitain@cict.fr}}
\author{B. Collins}
\address{B.C.: Department of Mathematics, Lyon 1 University, 43 bd du 11 novembre 1918, 69622 Villeurbanne, France and University of Ottawa, 585 King Edward, Ottawa, ON K1N 6N5, Canada. {\tt collins@math.univ-lyon1.fr}}

\subjclass[2000]{46L54 (15A52, 16W30)} 
\keywords{Poisson law, Bessel function, Wishart matrix, Quantum group}

\begin{abstract}
We introduce and study a remarkable family of real probability measures $\pi_{st}$, that we call free Bessel laws. These are related to the free Poisson law $\pi$ via the formulae $\pi_{s1}=\pi^{\boxtimes s}$ and $\pi_{1t}=\pi^{\boxplus t}$. Our study includes: definition and basic properties, analytic aspects (supports, atoms, densities), combinatorial aspects (functional transforms, moments, partitions), and a discussion of the relation with random matrices and quantum groups.
\end{abstract}

\maketitle

\section*{Introduction}

In this paper we introduce and study a remarkable two-parameter family of real probability measures, that we call free Bessel laws. These appear naturally in the context of Voiculescu's free probability theory \cite{vdn}.

In free probability, the central role is played by Wigner's semicircle law:
$$\gamma=\frac{1}{2\pi}\sqrt{4-x^2}\,dx$$

This measure appears in the free version of the central limit theorem, in the same way as the Gaussian law appears in the classical case. Moreover, Wigner's result can be understood in this way. See \cite{vdn}.

An alternative approach is based on the analogy between the Poisson law and the Marchenko-Pastur law, also called free Poisson law:
$$\pi=\frac{1}{2\pi}\sqrt{4x^{-1}-1}\,dx$$

The free Bessel laws $\pi_{st}$ are natural two-parameter generalizations of $\pi$. They can be introduced in several ways, depending on the values of the parameters. In the case $s\in(0,\infty)$ and $t\in (0,1]$, which is the most important, $\pi_{st}$ appears as free compression of $\pi^{\boxtimes s}$ by a projection of trace $t$:
$$\pi_{st}=\left(\pi^{\boxtimes s}\right)_t$$

An alternative formula, which actually works for a larger class of parameters, makes use of both Voiculescu's free convolution operations:
$$\pi_{st}=\pi^{\boxtimes s-1}\boxtimes \pi^{\boxplus t}$$

This latter formula, while a bit less transparent than the first one, makes clear the relation with $\pi$. Indeed, we have the following particular cases:
$$\begin{cases}
\pi_{s1}=\pi^{\boxtimes s}\\
\pi_{1t}=\pi^{\boxplus t}
\end{cases}$$

The measure $\pi_{st}$ with $s\in\mathbb N$ appears as free analogue of the following measure, having as density a kind of $s$-dimensional Bessel function:
$$p_{st}=e^{-t}\sum_{p_1=0}^\infty\ldots\sum_{p_s=0}^\infty\frac{1}{p_1!\ldots p_s!}\,\left(\frac{t}{s}\right)^{p_1+\ldots+p_s}\delta\left(\sum_{k=1}^se^{2\pi ik/s}p_k\right)^s$$

The analogy between Bessel laws and free Bessel laws can be understood in several ways. For instance if $a_1,\ldots,a_s$/$\alpha_1,\ldots,\alpha_s$ are independent/free variables, each of them following the Poisson/free Poisson law of parameter $s^{-1}t$, then:
\begin{eqnarray*}
p_{st}&=&{\rm law}\left(\sum_{k=1}^se^{2\pi ik/s}a_k\right)^s\\
\pi_{st}&=&{\rm law}\left(\sum_{k=1}^se^{2\pi ik/s}\alpha_k\right)^s
\end{eqnarray*}

Summarizing, the free Bessel laws are natural generalizations of the free Poisson law, in connection with Voiculescu's operations $\boxtimes$ and $\boxplus$, and with the Bessel functions.  In this paper we perform a systematic study of these laws. 

The point is that the free Bessel laws have a number of remarkable combinatorial properties, coming from a subtle relation with several key objects: 
\begin{enumerate}
\item Poisson laws. We prove that the supports, atoms, densities, as well as the various functional transforms of $\pi_{st}$ are given by formulae similar to those for the free Poisson laws.

\item Noncrossing partitions. We prove that the combinatorics of $\pi_{st}$ with $s\in\mathbb N$ is encoded by the set $NC_s$ of noncrossing partitions having blocks of size multiple of $s$, studied by Edelman \cite{ed}, Stanley \cite{st}, Armstrong \cite{ar}. 

\item Random matrices. We prove that $\pi_{s1}$ with $s\in\mathbb N$ is the asymptotic law of $(DW)^s$, where $W$ is a Wishart matrix, and $D$ is a diagonal matrix formed by uniformly distributed $s$-roots of unity.

\item Quantum groups. We prove that $p_{st}$ with $s\in\mathbb N$ is related to the finite group $H_n^s=\mathbb Z_s\wr S_n$, and that $\pi_{st}$ is related to the free version of $H_n^s$. The relation is via asymptotic laws of truncated characters.
\end{enumerate}

The quantum group results are part of a ``representation theory correspondence'', which is currently under construction. We have here the following table, collecting various results from \cite{bbc}, \cite{bc1}, \cite{bc2} and from the present paper:
\begin{center}
\begin{tabular}[t]{|l|l|l|l|l|}
\hline Lie group&Classical law&Quantum law\\
\hline\hline $O_n$&Gaussian&Semicircular\\
\hline $U_n$&Complex Gaussian&Circular\\
\hline $S_n$&Poisson&Free Poisson\\
\hline $H_n$&2-Bessel&Free 2-Bessel\\
\hline $H_n^s$&Bessel&Free Bessel\\
\hline
\end{tabular}
\end{center}
\medskip

The measures in this table are related by the general correspondence found by Bercovici and Pata in \cite{bp}. The table itself can be thought of as providing a concrete realization of the main particular cases of the correspondence.

The noncrossing partition and random matrix results seem to be as well part of some general correspondences, extending fundamental results about $\pi$. We hope to come back with more results in this sense, in some future work.

The paper is organized in four parts, as follows:
\begin{enumerate}
\item In 1-3 we discuss the construction of the free Bessel laws, and their basic analytic and combinatorial properties.

\item In 4-6 we discuss the relation with noncrossing partitions, the moment formula, and the random matrix models. 

\item In 7-9 we discuss some free additivity properties, the classical analogues, and the compound Poisson law interpretations. 

\item In 10-12 we discuss representation theory aspects, with the finite group model for $p_{st}$, and the free quantum group model for $\pi_{st}$.
\end{enumerate}

\subsection*{Acknowledgements.} This work was done with help from several institutions, and in particular we would like to thank the CNRS, NSERC, and the Fields Institute.

\section*{0. Notations}

Associated to a real probability measure having sequence of moments $m_1$, $m_2$, $m_3$, $\ldots$ are the following functional transforms:
\begin{enumerate}
\item Stieltjes transform: $f(z)=1+m_1z+m_2z^2+\ldots$
\item $\psi$ transform: $\psi(z)=f(z)-1$.
\item $\chi$ transform: $\psi(\chi(z))=z$.
\item $S$ transform: $S(z)=(1+z^{-1})\chi(z)$.
\item Cauchy transform: $G(\xi)=\xi^{-1}f(\xi^{-1})$.
\item $K$ transform: $G(K(z))=z$.
\item $R$ transform: $R(z)=K(z)-1/z$.
\item $\eta$ transform: $\eta(z)=1-1/f(z)$.
\item $\Sigma$ transform: $\Sigma(z)=S(z/(1-z))$.
\end{enumerate}

Here all the notations, except maybe for that of the Stieltjes transform, are the standard ones from the free probability literature \cite{vdn}, \cite{v3}, \cite{hp}, \cite{ns}.

\section{Definition, basic properties}

The origins of free probability theory go back to Voiculescu's noncommutative central limit theorem, where the Gaussian law is replaced by Wigner's semicircle law \cite{v1}. Since then, the analogy between the Gaussian law and the semicircle law has served as a guideline for the whole theory. See \cite{vdn}.

For the purposes of this paper, the guiding analogy will be that between the Poisson law and the Marchenko-Pastur law, also called free Poisson law.

We recall that the Poisson law is the following probability measure:
$$p=\frac{1}{e}\sum_{r=0}^\infty \frac{\delta_r}{r!}$$

According to general results in free probability, the free analogue of this measure can be introduced in the following way.

\begin{definition}
The free Poisson law is given by:
$$\pi=\frac{1}{2\pi}\sqrt{4x^{-1}-1}\,dx$$
\end{definition}

The support of this measure is the interval where the square root is real, namely $[0,4]$. This measure is also called Marchenko-Pastur law. See \cite{v3}.

We denote by $\boxplus$ and $\boxtimes$ the free additive and multiplicative convolutions, and we use Voiculescu's $R$ and $S$ transforms, which linearize them. See \cite{v1}, \cite{v2}. 

Given a real probability measure $\mu$, one can ask whether the convolution powers $\mu^{\boxtimes s}$ and $\mu^{\boxplus t}$ exist, for various values of $s,t>0$. The problem makes sense, because of the one-to-one correspondence between measures and their transforms. More precisely, the question is whether $S_s(z)=S(z)^s$ and $R_t(z)=tR(z)$ are the $S$ and $R$ transforms of some real probability measures.

For the free Poisson law, the answer to these questions is well-known. We include here the precise statement, along with a complete proof. This will serve as a model for some subsequent generalizations.

\begin{theorem}
The measures $\pi^{\boxtimes s}$, $\pi^{\boxplus t}$ exist for any $s,t>0$.
\end{theorem}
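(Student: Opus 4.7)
The natural strategy is to first compute the linearising transforms of $\pi$, and then for each exponent verify that the candidates $tR_\pi(z) = t/(1-z)$ and $S_\pi(z)^s = 1/(1+z)^s$ arise from honest probability measures.

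The starting point is elementary. The moments of $\pi$ are the Catalan numbers, so $f_\pi(z) = (1-\sqrt{1-4z})/(2z)$. Solving $\psi_\pi(\chi(z)) = z$ yields $\chi_\pi(z) = z/(1+z)^2$ and hence $S_\pi(z) = (1+z^{-1})\chi_\pi(z) = 1/(1+z)$. Dually, from $G_\pi(\xi) = \xi^{-1}f_\pi(\xi^{-1})$ one inverts to obtain $K_\pi(z) = 1/(z(1-z))$ and $R_\pi(z) = 1/(1-z)$. These calculations are routine algebra and are carried out first.

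For $\pi^{\boxplus t}$, the plan is to exhibit the measure explicitly as the Marchenko--Pastur distribution with parameter $t$: supported on $[(1-\sqrt t)^2,(1+\sqrt t)^2]$ with density proportional to $\sqrt{4tx - (x-1+t)^2}/x$, together with an atom of mass $\max(0,1-t)$ at $0$ when $t<1$. Computing its Cauchy transform, inverting to get $K$, and checking that $R(z)=t/(1-z)$ is a direct verification; the candidate is manifestly a probability measure for every $t>0$, so the only work is algebraic.

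For $\pi^{\boxtimes s}$ the situation is more delicate when $s$ is not an integer, and this is where I expect the main obstacle. For $s\in\mathbb{N}$ the measure is obtained trivially by iterating $\boxtimes$. For general $s>0$ I see two routes. The first is to invoke $\boxtimes$-infinite divisibility of $\pi$, encoded in the particularly simple form $\Sigma_\pi(z)=1-z$ coming from the computation of $S_\pi$ above, and thereby to produce a whole $\boxtimes$-convolution semigroup $(\pi^{\boxtimes s})_{s>0}$ with $S$-transform $1/(1+z)^s$. The second is to check directly, via Lagrange inversion applied to $\chi_s(z)=z/(1+z)^{s+1}$, that the resulting moments (generalised Fuss--Catalan/Raney numbers) satisfy the Hamburger moment condition on $[0,\infty)$. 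The first route is cleaner and is the natural model for the later extensions announced in the introduction, so that is the one I would pursue.
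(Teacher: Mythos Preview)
Your plan is correct and essentially identical to the paper's proof: exhibit the Marchenko--Pastur law of parameter $t$ explicitly and verify $R(z)=t/(1-z)$ for the additive part, then compute $\Sigma_\pi(z)=1-z$ and invoke the Bercovici--Voiculescu characterisation of $\boxtimes$-infinite divisibility for the multiplicative part. The only cosmetic difference is that the paper derives $R$ and $S$ by manipulating the quadratic equation for the Cauchy transform of $\pi_t$ rather than starting from the Catalan moments, and it does not mention your alternative route (b) via Fuss--Catalan moments (though those moments do appear later in the paper, in Theorem~5.1).
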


\begin{proof}
The free Poisson law $\pi$, as introduced in Definition 1.1, is the $t=1$ particular case of the free Poisson law of parameter $t$, given by:
$$\pi_t=\max (1-t,0)\delta_0+\frac{\sqrt{4t-(x-1-t)^2}}{2\pi x}\,dx$$

The Cauchy transform of this measure is given by:
$$G(\xi)=\frac{(\xi+1-t)+\sqrt{(\xi+1-t)^2-4\xi}}{2\xi}$$

We can compute now the $R$ transform, by proceeding as follows:
\begin{eqnarray*}
\xi G^2+1=(\xi+1-t)G
&\implies&Kz^2+1=(K+1-t)z\\
&\implies&Rz^2+z+1=(R+1-t)z+1\\
&\implies&Rz=R-t\\
&\implies&R=t/(1-z)
\end{eqnarray*}

This expression being linear in $t$, the measures $\pi_t$ form a semigroup with respect to free convolution. Thus we have $\pi_t=\pi^{\boxplus t}$, which proves the second assertion.

Regarding now the measure $\pi^{\boxtimes s}$, there is no explicit formula for its density. However, we can prove that this measure exists, by using some abstract results. 

We have the following computation for the $S$ transform of $\pi_t$:
\begin{eqnarray*}
\xi G^2+1=(\xi+1-t)G
&\implies&zf^2+1=(1+z-zt)f\\
&\implies&z(\psi+1)^2+1=(1+z-zt)(\psi+1)\\
&\implies&\chi(z+1)^2+1=(1+\chi-\chi t)(z+1)\\
&\implies&\chi(z+1)(t+z)=z\\
&\implies&S=1/(t+z)
\end{eqnarray*}

In particular at $t=1$ we have $S(z)=1/(1+z)$, so the $\Sigma$ transform of $\pi$, which is by definition $\Sigma(z)=S(z/(1-z))$, is given by:
$$\Sigma(z)=1-z$$

The existence of $\pi^{\boxtimes s}$ follows now from general results in \cite{bv}. Indeed, it is shown there that the $\Sigma$ transforms of the probability measures which are $\boxtimes$-infinitely divisible are the functions of the form $\Sigma(z)=e^{v(z)}$, where $v:\mathbb C-[0,\infty)\to\mathbb C$ is analytic, satisfying $v(\bar{z})=\bar{v}(z)$ and $v(\mathbb C^+)\subset\mathbb C^-$ (here, and in what follows, we denote by $\mathbb C^+$ and $\mathbb C^-$ the upper and lower half-plane). 

In the case of the free Poisson law, the function $v(z)=\log (1-z)$ satisfies all the above properties, and this gives the result.
\end{proof}

The starting point for the considerations in the present paper is the following remarkable identity.

\begin{theorem}
For $s\geq 1$ and $t\in (0,1]$ we have:
$$\pi^{\boxtimes s-1}\boxtimes\pi^{\boxplus t}
=((1-t)\delta_0+t\delta_1)\boxtimes\pi^{\boxtimes s}$$
\end{theorem}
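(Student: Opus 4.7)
The plan is to verify the identity by showing that both sides have the same $S$-transform. Since $S$ is multiplicative with respect to $\boxtimes$ on measures supported on $[0,\infty)$ with nonzero first moment, and since the $S$-transform determines the measure, this will suffice. All measures in play are supported on $[0,\infty)$ and have strictly positive first moment (equal to $t$ or to $1$), so the transform is legitimate on both sides.

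Using the formulae already established in the proof of Theorem 1.2, namely $S_{\pi_t}(z)=1/(t+z)$ and hence $S_\pi(z)=1/(1+z)$, together with $\pi^{\boxplus t}=\pi_t$, the left-hand side has $S$-transform
$$S_{LHS}(z)=S_\pi(z)^{s-1}\cdot S_{\pi_t}(z)=\frac{1}{(1+z)^{s-1}(t+z)}.$$
For the right-hand side I will directly compute the $S$-transform of the Bernoulli-type measure $\nu=(1-t)\delta_0+t\delta_1$. Its moments are all equal to $t$, so
$$f_\nu(z)=1+\sum_{k\geq1}tz^k=\frac{1-(1-t)z}{1-z},\qquad \psi_\nu(z)=\frac{tz}{1-z}.$$
Inverting $\psi_\nu$ gives $\chi_\nu(z)=z/(t+z)$, hence $S_\nu(z)=(1+z^{-1})\chi_\nu(z)=(z+1)/(t+z)$. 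Combining with $S_{\pi^{\boxtimes s}}(z)=1/(1+z)^s$,
$$S_{RHS}(z)=\frac{z+1}{t+z}\cdot\frac{1}{(1+z)^s}=\frac{1}{(1+z)^{s-1}(t+z)},$$
which matches $S_{LHS}$ and completes the argument.

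I do not anticipate a serious obstacle: the computation is essentially bookkeeping using transforms already calculated. The only point deserving care is to justify applicability of the $S$-transform machinery, namely that the first moments are nonzero and the measures are supported on $[0,\infty)$, which is immediate from the definitions and the constraints $s\geq 1$, $t\in(0,1]$.
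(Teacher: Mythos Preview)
Your proof is correct and follows essentially the same approach as the paper: both compute the $S$-transform of each side, using $S_\pi(z)=1/(1+z)$, $S_{\pi^{\boxplus t}}(z)=1/(t+z)$, and the direct computation $S_{(1-t)\delta_0+t\delta_1}(z)=(1+z)/(t+z)$, and observe that the two products agree. The only addition you make beyond the paper is the remark about applicability of the $S$-transform, which is harmless.
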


\begin{proof}
We know from the previous proof that the $S$ transform of $\pi$ is given by $S(z)=1/(1+z)$, and that the $S$ transform of $\pi^{\boxplus t}$ is given by $S(z)=1/(t+z)$. Thus the measure on the left has the following $S$ transform:
$$S(z)=\frac{1}{(1+z)^{s-1}}\cdot\frac{1}{t+z}$$

The $S$ transform of $\alpha_t=(1-t)\delta_0+t\delta_1$ can be computed as follows:
\begin{eqnarray*}
f=1+tz/(1-z)
&\implies&\psi=tz/(1-z)\\
&\implies&z=t\chi/(1-\chi)\\
&\implies&\chi=z/(t+z)\\
&\implies& S=(1+z)/(t+z)
\end{eqnarray*}

This shows that the measure on the right has the following $S$ transform:
$$S(z)=\frac{1}{(1+z)^s}\cdot\frac{1+z}{t+z}$$

Thus the $S$ transforms of our two measures are the same, and we are done.
\end{proof}

We are now in position of introducing a remarkable two-parameter family of real probability measures. We call them free Bessel laws, because of a certain relationship with the Bessel functions, to be discussed later on.

\begin{definition}
The free Bessel law is the real probability measure $\pi_{st}$ with $(s,t)\in (0,\infty)\times(0,\infty)-(0,1)\times (1,\infty)$, defined as follows:
\begin{enumerate}
\item For $s\geq 1$ we set $\pi_{st}=\pi^{\boxtimes s-1}\boxtimes\pi^{\boxplus t}$.
\item For $t\leq 1$ we set $\pi_{st}=((1-t)\delta_0+t\delta_1)\boxtimes\pi^{\boxtimes s}$.
\end{enumerate}
\end{definition}

The compatibility between (1) and (2) comes from Theorem 1.3.

We regard the free Bessel law $\pi_{st}$ as being a natural two-parameter generalization of the free Poisson law $\pi$, in connection with Voiculescu's free convolution operations $\boxtimes$ and $\boxplus$. Observe that we have the following formulae:
$$\begin{cases}
\pi_{s1}=\pi^{\boxtimes s}\\
\pi_{1t}=\pi^{\boxplus t}
\end{cases}$$

Concerning the precise range of the parameters $(s,t)$, the above results can be probably improved. The point is that the measure $\pi_{st}$ still exists for certain points in the critical rectangle $(0,1)\times (1,\infty)$, but not for all of them. 

We did a number of abstract or numeric checks in this sense, and the critical values of $(s,t)$ seem to form an algebraic curve contained in $(0,1)\times (1,\infty)$, having $s=1$ as an asymptote. However, the case we are the most interested in is $t\in (0,1]$, and here there is no problem: $\pi_{st}$ exists for any $s>0$.

\section{The measures $\pi_{s1}$}

In this section and in the next one we study the support, atoms and density of $\pi_{st}$. As in the $s=1$ case, the formulae depend on whether $t$ is bigger, smaller or equal to $1$. We start with a complete study in the $t=1$ case.

We will use several times a well-known result of Lindel\"of, stating that an analytic function $g:\mathbb C^+\to\mathbb C$ has nontangential limit $a$ at a point $x$ provided that $g(\mathbb C^+)$ omits at least two points of $\mathbb C$, and that we have $\lim_{t\to 1}g(\gamma(t))=a$ for a certain path $\gamma\subset\mathbb C^+$, tending to $x$ in the frontier of $\mathbb C^+$ as $t\to 1$.

\begin{theorem}
The measure $\pi_{s1}$ has the following properties:
\begin{enumerate}
\item There are no atoms.
\item The support is $[0,K]$ where $K=(s+1)^{s+1}/s^s$.
\item The density is analytic on $(0,K)$.
\item The density is bounded at $x=K$, and is $\sim 1/(\pi x^{s/(s+1)})$ at $x=0$. 
\end{enumerate}
\end{theorem}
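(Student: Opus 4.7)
The plan is to reduce all four statements to analysis of one algebraic equation for the Cauchy transform of $\pi_{s1} = \pi^{\boxtimes s}$. Starting from Theorem 1.2, $S_\pi(z) = 1/(1+z)$, so multiplicativity of $S$ under $\boxtimes$ gives $S_{\pi^{\boxtimes s}}(z) = (1+z)^{-s}$. The relation $S(z) = (1+z^{-1})\chi(z)$ then yields $\chi(z) = z/(1+z)^{s+1}$, and since $\chi$ inverts $\psi$ with $\psi(1/\xi) = \xi G(\xi) - 1$, applying $\chi$ to both sides shows that $F(\xi) := \xi G(\xi)$ satisfies
$$F(\xi)^{s+1} = \xi\bigl(F(\xi)-1\bigr), \qquad \xi \in \mathbb{C}^+,$$
with the normalization $F(\xi) \to 1$ as $\xi \to \infty$. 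Equivalently $\xi = \Phi(F)$ with $\Phi(F) = F^{s+1}/(F-1)$, and $\Phi'(F) = F^s(sF-(s+1))/(F-1)^2$ has only one nontrivial critical point $F_* = (s+1)/s$, with critical value $\Phi(F_*) = K = (s+1)^{s+1}/s^s$.

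For absence of atoms, any atom at $x_0 > 0$ would give $G$ a pole there, hence $F \to \infty$ as $\xi \to x_0$; but $F^{s+1} = \xi(F-1)$ then forces $F^s \to x_0$, a contradiction. At $x_0 = 0$ the same equation forces $F \to 0$, so $\lim_{\varepsilon \to 0^+} \varepsilon G(i\varepsilon) = \lim F(i\varepsilon) = 0$, again no atom.

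For the support, the density at $x > 0$ equals $-\operatorname{Im} F(x+i0^+)/(\pi x)$, so one must locate where the branch of the multi-valued function determined by $F(\infty)=1$ is non-real on the real axis. Decreasing $\xi$ from $+\infty$, this branch follows $F \in (1, F_*)$ on $\xi \in (K, \infty)$, reaches $F_*$ at $\xi = K$, and is then pushed off the real axis by the square-root branching at this critical point; on $(0, K)$ no real branch joins continuously to $F(\infty) = 1$, so $F$ takes non-real values there; for $\xi < 0$ the analytic continuation returns to a real value in $(0, 1)$, consistent with $\pi_{s1}$ being supported on $[0, \infty)$. Hence $\operatorname{supp}(\pi_{s1}) = [0, K]$.

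For analyticity, the only critical values of $\Phi$ are $0$ and $K$, so the implicit function theorem extends $F$ analytically across every $x \in (0, K)$, and the density inherits real analyticity. For the boundary behavior, expanding $\Phi(F) - K \sim \tfrac{1}{2}\Phi''(F_*)(F - F_*)^2$ and inverting gives $F(\xi) - F_* \sim c\sqrt{\xi - K}$, so $\operatorname{Im} F(x+i0^+) \sim c'\sqrt{K-x}$ as $x \uparrow K$ and the density is bounded at $K$ (in fact vanishing like $\sqrt{K-x}$). Near $0$, $\Phi(F) \sim -F^{s+1}$, so $F(\xi) \sim (-\xi)^{1/(s+1)}$; the root in $\mathbb{C}^-$ compatible with $\xi \to x + i0^+$ gives $F(x+i0^+) \sim x^{1/(s+1)} e^{-i\pi/(s+1)}$, whence the density behaves as $\sin(\pi/(s+1))/(\pi x^{s/(s+1)})$, producing the announced $x^{-s/(s+1)}$ singularity.

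The main obstacle is the branch-selection argument in the support step: one must certify that the branch of the multi-valued function picked by $F(\infty) = 1$ is precisely the one described, and exclude competing sheets which could be real on $(0, K)$. I would handle this by tracking $F$ along explicit paths in $\mathbb{C}^+$ (for instance large semicircles collapsing onto real intervals), invoking the Lindelöf theorem recalled just before the statement to identify boundary values, and performing a degree count for $\Phi : \mathbb{C}^- \to \mathbb{C}$ confirming that this branch alone carries the full mass of $\pi_{s1}$.
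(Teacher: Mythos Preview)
Your approach is essentially the paper's, in different coordinates. The paper works with $\eta$ and its left inverse $w\mapsto w(1-w)^s$; you work with $F=\xi G$ and its left inverse $\Phi(F)=F^{s+1}/(F-1)$. These are the same equation under $F=1/(1-\eta)$, $\xi=1/z$, and your critical point $F_*=(s+1)/s$ corresponds to the paper's $w=1/(s+1)$, both giving the value $K$. Your arguments for the atoms and for the endpoint asymptotics are correct and in fact a bit more direct than the paper's: because $F$ maps $\mathbb C^+$ into the open lower half-plane (so $F^{s+1}$ is globally well defined via a single branch of $\log F$), you avoid the case split $s<1$ versus $s>1$ and the subordination function $\omega_s$ that the paper introduces to control $\eta\to\infty$.

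The gap you flag is real, and it is exactly the point where the paper leans on an external input. What you need is that the restriction of $F$ (equivalently $\eta$) to $\mathbb C^+$ extends \emph{continuously and injectively} to $\mathbb R$; once you have that, your observation that $\Phi$ has no real preimage in $(0,K)$ other than through non-real $F$ immediately gives the support, and the implicit function theorem (critical values only at $0$ and $K$) gives analyticity of the density on $(0,K)$. The paper obtains this continuous injective extension by quoting Belinschi--Bercovici \cite{bhb}; your proposed fix via path-tracking, Lindel\"of, and a degree count would in effect be reproving that result. Note in particular that $\Phi$ is \emph{not} injective on $\mathbb C^-$ (already for $s=1$ one has $\Phi(F_1)=\Phi(F_2)$ whenever $F_1F_2=F_1+F_2$), so a naive ``$F=\Phi^{-1}$'' argument fails; the injectivity you need is that of the specific branch selected by $F(\infty)=1$, and establishing it on the real boundary is the genuine analytic content.
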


\begin{proof}
We denote by $G,\eta$ the Cauchy and $\eta$ transforms of $\pi_{s1}$. We have:
$$G(1/z)=\frac{z}{1-\eta(z)}$$

We recall that at $s=1$ the eta transform is: 
$$\eta_1(w)=\frac{1-\sqrt{1-4w}}{2}$$

In the case $s=1$ the measure $\pi_{11}$ is the free Poisson law, and all the assertions are clear from the explicit formula of the density. At $s\neq 1$ we have 2 cases.

{\bf Case 1:} $s<1$. We know that $\eta$ is the right
inverse of $\Phi(w)=w(1-w)^s$. The function $\Phi$ is analytic on $\mathbb C-[1,\infty),$ and the derivative is:
$$\Phi'(w)=(1-w)^{s-1}(1-(s+1)w)$$ 

This derivative vanishes at $w=1/(s+1)$, and we have $\Phi(1/(s+1))=1/K$. Also, it follows that the right inverse $\eta$ extends analytically from $-\infty$ up to $1/(s+1)$. By \cite{bhb}, the restriction of $\eta$ to $\mathbb C^+$ extends continuously and injectively to $\mathbb R$. Thus $\eta(1/K,\infty)$ is a simple analytic curve in $\mathbb C^+$, tending to $\infty$ as $x\to\infty$. Now from the formula of $G$ we get the assertions (1), (2), (3). See \cite{bhb}.

By applying Lindel\"of's theorem to the function $g(z)=\eta(z)/z^{1/(s+1)}$, we get:
$$\lim_{x\to\infty}g(x)=e^{i\pi s/(s+1)}$$

The formula of the Cauchy transform tells us that:
\begin{eqnarray*}
\lim_{x\downarrow 0} G(x)x^{s/(s+1)} 
&=&\lim_{x\to\infty}G(x^{-1})x^{-s/(s+1)}\\
& = & \lim_{x\to\infty}\frac{x^{1/(s+1)}}{1-\eta(x)}\\
&=&e^{-i\pi s/(s+1)}
\end{eqnarray*}

Thus the negative imaginary part tends to $\sin (\pi s/(s+1))$, and we are done.

{\bf Case 2:} $s>1$. The statement refering to atoms follows from Case 1 and \cite{bhb}. It remains to show the corresponding statements for the density.

Consider the following function: $$\omega_s(z)=\eta(z)\left(\frac{z}{\eta(z)}
\right)^{1/s}$$

By \cite{bhb} the restriction of $\omega_s$ to $\mathbb C^+$ extends continuously and injectively to $\mathbb R$, we have $\eta_1\circ\omega_s=\eta$, and left inverse of $\omega_s$ is: 
$$\Phi_s(z)=z\left(\frac{z}{\eta_1(z)}\right)^{s-1}$$

The derivative of $\Phi_s$ is given by:
$$\Phi_s'(w)=\left(\frac{w}{\eta_1(w)}
\right)^{s-1}\left(1+(s-1)\left(1-\frac{w}{\eta_1(w)}\eta'_1(w)\right)\right)$$

As before, by using the formula of $\eta_1$, we get that:
\begin{enumerate}
\item $\Phi_s$ is increasing on $(-\infty,s/(s+1)^2)$.
\item $\Phi_s$ has a maximum at $s/(s+1)^2$ of value $1/K$.
\item $\Phi_s$ is decreasing to $1/2^{s+1}$ on $(s/(s+1)^2,1/4]$.
\end{enumerate}

As in Case 1, one shows that $\omega_s(-\infty,1/K)=(-\infty,s/(s+1)^2)$ and that $\omega_s(1/K,\infty)$ is a simple analytic curve in $\mathbb C^+$ tending to infinity when $x\to\infty$. The relation
$\eta_1\circ\omega_s=\eta$ and the
formula of $\eta_1$ guarantees that $\eta(1/K,\infty)
\subset\mathbb C^+$ and $\eta(-\infty,1/K)\subset\mathbb R$. We conclude that ${\rm supp\ }\pi_{s1}=[0,K]$ and that the density is analytic on the interior of the support. 

It remains to prove (4). We use the same method as in the previous case, along with subordination functions as a tool. For $x\in\mathbb R-\{0\}$ we have:
\begin{eqnarray*}
\frac{\omega_s(x)}{x^{2/(s+1)}}
&=&x^{1/s}\cdot\frac{\eta_1(\omega_s(x))^{(s-1)/s}}{\omega_s(x)^{(s-1)/(2s)}}\cdot\frac{\omega_s(x)^{(s-1)/(2s)}}{x^{2/(s+1)}}\\
& = & x^{1/s}\cdot\frac{\omega_s(x)^{(s-1)/(2s)}}{x^{2/(s+1)}}\left(
\frac{\eta_1(\omega_s(x))}{\sqrt{\omega_s(x)}}\right)^{(s-1)/s}
\end{eqnarray*}

Recall that $\omega_s(x)\to\infty$ as $x\to\infty$, so that the third
factor above has a finite limit in the closure of $\mathbb C^+$ (we use here the branch of the square root which is defined on $\mathbb C-\mathbb R^+$). We get from Lindel\"{o}f's theorem:
\begin{eqnarray*}
\lim_{x\to\infty}\frac{\eta_1(\omega_s(x))}{\sqrt{\omega_s(x)}}
&=&\lim_{w\to-\infty}\frac{\eta_1(w)}{\sqrt{w}}\\
&=&\lim_{w\to-\infty}\frac{1-\sqrt{1-4w}}{2i\sqrt{|w|}}\\
&=&i
\end{eqnarray*}

This gives the following formula:
\begin{eqnarray*}
\lim_{x\to\infty}\frac{\sqrt{\omega_s(x)}}{x^{1/(s+1)}}
&=&\lim_{x\to\infty}\left(\frac{\eta_1(\omega_s(x))}{
\sqrt{\omega_s(x)}}\right)^{(s-1)/s}\left(\frac{\sqrt{\omega_s(x)}}{x^{1/(s+1)}}\right)^{(s-1)/(2s)}\\
&=&i^{(s-1)/(2s)}\lim_{x\to\infty}\left(\frac{\sqrt{\omega_s(x)}}{x^{1/(s+1)}}\right)^{(s-1)/(2s)}
\end{eqnarray*}

We get from this equation that the limit is $i^{(s-1)/(s+1)}$, so:
\begin{eqnarray*}
\lim_{x\to\infty}\frac{\eta(x)}{x^{1/(s+1)}}
&=&\lim_{x\to\infty}\frac{\eta_1(\omega_s(x))}{\sqrt{\omega_s(x)}}
\cdot\frac{\sqrt{\omega_s(x)}}{x^{1/(s+1)}}\\
&=&i\cdot i^{(s-1)/(s+1)}\\
&=&e^{i\pi s/(s+1)}
\end{eqnarray*}

Together with the formula of $G$, this concludes the proof.
\end{proof}

\section{The measures $\pi_{st}$}

We discuss now the support, atoms and density of $\pi_{st}$, for general values of $s,t>0$. Recall first that at $s=1$ we have the following formula:
$$\pi_{1t}=\max (1-t,0)\delta_0+\frac{\sqrt{4t-(x-1-t)^2}}{2\pi x}\,dx$$

For general $s$ the density is no longer explicit, and the formula of the support is quite complicated. We fix $s,t>0$, and we make the following notations:
\begin{eqnarray*}
\Phi(w)&=&tw\left(\frac{1-w}{1-(1-t)w}\right)^s\\
w_\pm&=&\frac{ts-t+2\pm\sqrt{t^2(s-1)^2+4st}}{2(1-t)}\\
K_\pm&=&\frac{t}{\Phi(w_\mp)}
\end{eqnarray*}

With these notations, we have the following result.

\begin{theorem}
The measure $\pi_{st}$ with $t<1$ has the following properties:
\begin{enumerate}
\item The atomic part is $(1-t)\delta_0$.
\item The rest of the support is $[K_-,K_+]$.
\item The density is analytic on $(K_-,K_+)$.
\item The density is $0$ at both ends of the support.
\end{enumerate}
\end{theorem}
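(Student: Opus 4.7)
The plan is to follow the strategy of Theorem 2.1, Case 2, with the auxiliary function $\Phi$ playing the role of the inverse (up to scaling) of a subordination function associated to the decomposition $\pi_{st}=\alpha_t\boxtimes\pi^{\boxtimes s}$, where $\alpha_t=(1-t)\delta_0+t\delta_1$. The first task is to establish the identity $\Phi(\omega_1(z))=tz$, where $\omega_1:\mathbb{C}^+\to\mathbb{C}^+$ is the subordination function for $\alpha_t$ in this convolution. From the $S$-transforms one reads $\eta_{\alpha_t}(w)=tw/(1-(1-t)w)$, and using $\eta_{\pi^{\boxtimes s}}^{-1}(w)=w(1-w)^s$ together with the standard multiplicative subordination identity $\omega_1\omega_2=z\eta_{\pi_{st}}$ (with $\omega_2=\eta_{\pi_{st}}(1-\eta_{\pi_{st}})^s$) yields $\omega_1=z/(1-\eta_{\pi_{st}})^s$. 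Substituting $1-\eta_{\pi_{st}}=(1-\omega_1)/(1-(1-t)\omega_1)$ gives $z=\omega_1((1-\omega_1)/(1-(1-t)\omega_1))^s=\Phi(\omega_1)/t$ directly.

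Next I would analyze $\Phi$ on the real line. Logarithmic differentiation gives
\[
\frac{\Phi'(w)}{\Phi(w)}=\frac{1}{w}-\frac{s}{1-w}+\frac{s(1-t)}{1-(1-t)w},
\]
and clearing denominators reduces $\Phi'(w)=0$ to $(1-t)w^2-(ts-t+2)w+1=0$ with roots $w_\pm$ and discriminant $t^2(s-1)^2+4st>0$. By Vieta, $w_-w_+=1/(1-t)$ and both roots are positive with $w_-\in(0,1)$ and $w_+>1/(1-t)$. A sign analysis on each branch of the rational factor shows $\Phi$ attains local extremum values $t/K_+$ at $w_-$ and $t/K_-$ at $w_+$, so that $w\mapsto t/\Phi(w)$ omits the open interval $(K_-,K_+)$ on the relevant branch, with the critical values $K_\pm$ realized precisely at the endpoints.

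With $\Phi$ in hand, the stated properties follow in the spirit of Theorem 2.1, Case 2. By the continuation result of \cite{bhb}, $\omega_1$ extends continuously and injectively to the real axis, taking real values on the preimage of $\mathbb{R}\setminus[1/K_+,1/K_-]$ and nonreal values on the preimage of $(1/K_+,1/K_-)$. Translating via $G(\xi)=\xi^{-1}/(1-\eta_{\pi_{st}}(\xi^{-1}))$ and $\eta_{\pi_{st}}=\eta_{\alpha_t}\circ\omega_1$, the continuous part of $\pi_{st}$ is supported on $[K_-,K_+]$ with analytic density on the open interval, yielding (2) and (3). The conditions $\Phi'(w_\mp)=0$, $\Phi''(w_\mp)\neq 0$ produce square-root branch points $\omega_1(1/\xi)-w_\mp\sim c\sqrt{|\xi-K_\pm|}$, so after Stieltjes inversion the density vanishes like $\sqrt{|\xi-K_\pm|}$, proving (4); in particular there are no atoms at $K_\pm$ since $\omega_1$ (hence $G$) has a finite limit there. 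For (1): as $\xi\to 0^+$ and $z=1/\xi\to+\infty$, the identity $\Phi(\omega_1)=tz$ forces $\omega_1(z)$ to escape along the real axis in the branch $w>w_+$, so $\eta_{\alpha_t}(\omega_1(z))\to\lim_{w\to\infty}tw/(1-(1-t)w)=-t/(1-t)$, whence $f_{\pi_{st}}(1/\xi)=(1-\eta_{\pi_{st}}(1/\xi))^{-1}\to 1-t$, giving $\pi_{st}(\{0\})=1-t$.

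The main obstacle is the detailed branch analysis in Step 2, particularly for non-integer $s$, where $\Phi$ involves principal branches of fractional powers on parts of the real line. Although $\omega_1$ is uniquely determined as a self-map of $\mathbb{C}^+$, identifying it as the correct branch of $\Phi^{-1}(tz)$ and verifying injectivity of its boundary extension requires Lindel\"of's theorem combined with the subordination argument of \cite{bhb}. The fact that $w_+$ always lies past the pole $w=1/(1-t)$ of the inner rational function adds a subtlety absent from the $t=1$ case, and it is what forces the formula $K_-=t/\Phi(w_+)$ to be read through the appropriate analytic continuation.
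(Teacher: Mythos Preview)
Your approach via multiplicative subordination is sound and runs closely parallel to the paper's proof. The paper instead invokes the compression formula $\pi_{st}=(1-t)\delta_0+t(\pi^{\boxtimes s})^{\boxplus 1/t}\boxtimes\delta_t$ from \cite{ns}, which isolates the atom immediately and reduces everything to the $\eta$-transform $\eta_\mu$ of $\mu=(\pi^{\boxtimes s})^{\boxplus 1/t}$, shown to satisfy $\Phi(\eta_\mu(z))=z$. Your $\omega_1$ is nothing but $\eta_\mu(t\,\cdot\,)$, so the two routes study the same inverse of $\Phi$; your derivation of $\Phi(\omega_1(z))=tz$ is correct and pleasantly direct, while the paper's route trades that computation for getting (1) for free.

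There is, however, a genuine gap at the point where you write that \cite{bhb} gives $\omega_1$ ``taking real values on the preimage of $\mathbb{R}\setminus[1/K_+,1/K_-]$ and nonreal values on the preimage of $(1/K_+,1/K_-)$.'' The continuation results give you a continuous boundary extension of $\omega_1$ into $\overline{\mathbb{C}^+}$, but they do \emph{not} by themselves tell you on which part of the real line $\omega_1$ is real. What must be ruled out is $\omega_1(x)\in(1,1/(1-t))$ for $x\in(1/K_+,1/K_-)$, and this is exactly the content of the paper's Claim, proved by a three-case argument depending on $s\pmod 1$ and $s\pmod 2$. You have also inverted the difficulty: for $s\notin\mathbb{N}$ the branch cut of $\Phi$ on $(1,1/(1-t))$ is precisely what excludes that interval (since $\Phi(\omega_1(x))=tx$ must remain real as a two-sided limit), so that case is easy. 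The hard case is $s\in 2\mathbb{N}$, where $\Phi$ is rational and covers $(0,\infty)$ from inside $(1,1/(1-t))$ as well; the paper handles this with a separating-curve argument using preimages of $(-\infty,0)$ in $\mathbb{C}^+$. Your sketch does not supply this step, and your limit argument for the atom also presupposes $\omega_1(z)\to\infty$ (rather than $\omega_1(z)\to 1/(1-t)^+$, where $\Phi$ likewise blows up), which again hinges on the same branch analysis.
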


\begin{proof}
The starting point is the formula (2) in Definition 1.4. By using general results in \cite{ns} about compressions with free projections, we get:
$$\pi_{st}=(1-t)\delta_0+t
\left(\pi^{\boxtimes s}\right)^{\boxplus 1/t}\boxtimes\delta_t$$

We will first analyze the measure appearing on the right, namely:
$$\mu=\left(\pi^{\boxtimes s}\right)^{\boxplus 1/t}$$

We denote by $R,S,\ldots$ the various transforms of $\mu$, and by $R_1,S_1,\ldots$ the same functions, in the particular case $t=1$. With these notations, we have:
\begin{eqnarray*}
S_1(z)=\frac{1}{(1+z)^s}
&\implies&S(z)=\frac{t}{(1+tz)^s}\\
&\implies&\Sigma(w)=t\left(\frac{1-w}{1-(1-t)w}\right)^s
\end{eqnarray*}

We recognize in the last formula the function $\Phi$. Summarizing, we have proved that the $\eta$ transform of $\mu$ is the right inverse of $\Phi$.

Note that the restriction of $\Phi$ to $\mathbb C^+$ and to $\mathbb C^-$ extends continuously to $\mathbb R$, but the two extensions do not agree on $(1,1/(1-t))$ when $s\not\in\mathbb N$.

Let us analyse the derivative of $\Phi$. This is given by:
$$\Phi'(w)=\frac{t((1-t)w^2+(t-st-2)w+1)}{(1-(1-t)w)^2}\left(\frac{1-w}{1-(1-t)w}\right)^{s-1}$$

Thus, in the general case $s\notin\{2,3,\ldots\}$, in the domain of analyticity $\Phi$ has exactly two singularities, namely at the points $w_+$ and $w_-$.

In the case $s\in\{2,3,\ldots\}$, the points 
$1$ and $1/(1-t)$ become as well singularities, because the function becomes rational.

We have $w_-<1<1/(1-t)<w_+$, and by direct computation we get $\Phi(w_-)<\Phi(w_+)$. Moreover, $\Phi$ is analytic around infinity, and is a local diffeomorphism. This 
follows indeed from:
$$\left(\frac{1}{\Phi(w^{-1})}\right)'
=\frac{w^2-(2+st-t)w+(1-t)}{t(w-1)^2}\left(\frac{w-(1-t)}{w-1}\right)^{s-1}$$

Let us draw our conclusions about $\Phi$ from the above facts:
\begin{enumerate}
\item The restriction of $\Phi$ to $\mathbb R$ increases from $-\infty$
to $\Phi(w_-)$ on the interval $(-\infty,w_-)$,  then decreases from $\Phi(w_-)$ to zero on $(w_-,1)$.
\item The restriction of $\Phi$ to $\mathbb R$ decreases from $\infty$
to $\Phi(w_+)$ on the interval $(1/(1-t),w_+)$, then increases back to $\infty$ on $(w_+,\infty)$.
\item The curve $\Phi (1,1/(1-t))$ escapes in $\mathbb C-\mathbb R$ if $s\not\in
\mathbb N$, is $(-\infty,0)$ if $s$ is odd, and is $(0,\infty)$ if $s$ is even.
\item $\Phi$ is invertible on a neighbourhood $N$ 
of $\{\infty\}\cup(-\infty,w_-)\cup(w_+,\infty)$. Moreover, $N$ can be chosen so that
 $\Phi(N\cap\mathbb C^{\pm})\subset\mathbb C^\pm.$
\end{enumerate}

From \cite{bhb} we have $\eta(-\infty,0)=(-\infty,0)$, so we conclude that $\eta$ is the
unique inverse of $\Phi$ on $N$ which carries $\infty$ into itself. By using the particular form of $\Phi$ and the equation
$\Phi(\eta(z))=z$ we get that the restriction to $\mathbb C^+$ of $\eta$ 
extends continuously and injectively to $\mathbb R$. It is obvious that $\eta$ extends analytically along the real line from infinity 
all the way to $\Phi(w_\pm)$, and not to those points. 

{\bf Claim:} We have $\eta(\Phi(w_-),\Phi(w_+))
\subset\mathbb C^+$. 

We know that $\eta(\Phi(w_-),\Phi(w_+))
\subset\mathbb C^+\cup\mathbb R$ (we know $\eta(\Phi(w_-),\Phi(w_+))$
is a bounded set because $\eta$ is injective on a neighbourhood - in $\mathbb C\cup\{\infty\}$ - of infinity). 
Thus, we only need to show that $\eta(x)\not\in\mathbb R$ for any $x\in
(\Phi(w_-),\Phi(w_+))$. 

First, observe that the injectivity of $\eta$ on the real line, together with (1) and (2), forbids
$\eta(x)$ to belong to $[w_-,1]\cup[1/(1-t),w_+]$. Thus we need to worry
only about $(1,1/(1-t))$.  So, assume that we have $\eta(x)\in(1,1/(1-t))$.

{\bf Case 1:} $s\in 2\mathbb N+1$. The same injectivity property together 
with (3) above provides a direct contradiction, since $\eta(-\infty,0)=
(-\infty,0)$.

{\bf Case 2:} $s\notin\mathbb N$. We denote by $\eta^-(x)$ the value of the extension of 
$\eta$ from the lower half-plane. From $\eta(\bar z)=\overline{\eta(z)}$ we get $\eta(x)=\eta^-(x)$, so the following can happen only if $s$ is even, and we are done:
$$\lim_{z\to x,z\in\mathbb C^+}\Phi(\eta(z))=x=\lim_{z\to x,z\in\mathbb C^-}
\Phi(\eta(z))$$

{\bf Case 3:} $s\in 2\mathbb N$. In this case, $\Phi$ has
multiplicity $s$ around $1$ and $1/(1-t),$ i.e. on some small neighbourhoods of $0$ and $\infty$  respectively, $\Phi$ covers each
point with $s$ points in a neighbourhood of $0$ and $1/(1-t)$, respectively.
In particular, we claim that $(-\infty,0)$ has in each half-plane
$s/2$ bounded preimages via $\Phi$, which are  simple curves uniting $1$ and $1/(1-t)$. 

Indeed, we know that there is  some $\varepsilon<0$ so that $(\varepsilon,0)$ has a preimage $\gamma_\varepsilon$ starting from one and climbing into the upper half-plane. We will show that $\gamma_\varepsilon$ extends to a bounded curve $\gamma$ uniting
$1$ and $1/(1-t)$.

Choose $v>\Phi(w_+)$ and consider the intersection with
$\mathbb C^+$ of the circle of diameter $(0,v)$. Denote it by $C$. Then $
\eta(C)$ is a simple curve in the upper half-plane starting from zero
and ending to a point $\eta(v)\in(w_+,\infty).$ Since $\mu$ has no atoms, we conclude that 
$\eta(C)$ stays bounded away from $1$. We claim that $\gamma$ lies between 
$\eta(C)$ and $(0,\eta(v)).$ Indeed, otherwise it would have to intersect 
one of these two curves. If it were to intersect $(0,\eta(v))$, then we would have
that $\Phi$ maps a point from $(0,\infty)$ into a negative number, a contradiction (recall $s$ is even). If it were to intersect $\eta(C)$, then,
since $\Phi(\eta(C))=C$, we would have that $C$ intersects the negative half-line,
another contradiction. 

Thus, $\gamma$ is bounded. Since its image via $\Phi$ is unbounded, $\gamma$ must end at $1/(1-t)$. It is easy to argue that $\gamma$ is simple:
it could fail to be so only by meeting a critical point of $\Phi$. But there are
only two critical points except $1$ and $1/(1-t)$, namely $w_-$ and $w_+$, and neither
is mapped by $\Phi$ in $(-\infty,0]$. 

Now we complete our proof: we have seen that $\gamma$ separates $w_-$ and $w_+$ from $(1,1/(1-t))$. Thus, if the simple curve 
$L=\eta(\Phi(w_-),\Phi(w_+))$ uniting $w_-$ and $w_+$ 
were to touch $(1,1/(1-t))$, 
then $L$ would have to 
intersect $\gamma$, so that there were $a\in(\Phi(w_-),\Phi(w_+))$
with the property $a=\Phi(\eta(a))\in\Phi(\gamma)=(-\infty,0]$,
an obvious contradiction, since we have seen that $\Phi(w_-),\Phi(w_+)>0$.

The equation of $G$ together with the formula of $\pi_{st}$ in the beginning of the proof gives the result. 
\end{proof}

\begin{theorem}
The measure $\pi_{st}$ with $t>1$ has the following properties:
\begin{enumerate}
\item There are no atoms.
\item The support is $[0,K_+]$.
\item The density is analytic on $(0,K_+)$.
\item The density is $\sim 1/((t-1)x^{1/s})$ at the left endpoint of the support, and $0$ at the right endpoint.
\end{enumerate}
\end{theorem}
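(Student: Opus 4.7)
The plan is to adapt the proof of Theorem~3.1 to the regime $t>1$. The free-compression decomposition $\pi_{st}=(1-t)\delta_0+t\mu\boxtimes\delta_t$ used there requires $t\le 1$, so we work directly from Definition~1.4(1): $\pi_{st}=\pi^{\boxtimes s-1}\boxtimes\pi^{\boxplus t}$. Multiplicativity of $\Sigma$ under $\boxtimes$, together with $\Sigma_\pi(w)=1-w$ and $\Sigma_{\pi^{\boxplus t}}(w)=(1-w)/(t-(t-1)w)$ (the latter coming from $S_{\pi^{\boxplus t}}(z)=1/(t+z)$ as in Theorem~1.2), gives
$$\Sigma_{\pi_{st}}(w)=\frac{(1-w)^s}{t-(t-1)w},$$
so $\eta_{\pi_{st}}$ is the right inverse of the rational function $\tilde\Phi(w):=w(1-w)^s/(t-(t-1)w)$. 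A direct check at the level of critical values shows that the right-hand support endpoint produced by $\tilde\Phi$ coincides with the $K_+=t/\Phi(w_-)$ of the pre-theorem notations.

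The analytic analysis then mirrors that of Theorem~3.1. One computes
$$\tilde\Phi'(w)=\frac{(1-w)^{s-1}\bigl(s(t-1)w^2-t(s+1)w+t\bigr)}{(t-(t-1)w)^2},$$
identifies the two real critical points $w^*_\pm$ (both positive for $t>1$), and verifies the real-line ordering $0<w^*_-<1<t/(t-1)<w^*_+$. The Belinschi-Bercovici subordination machinery from \cite{bhb} extends $\eta_{\pi_{st}}|_{\mathbb C^+}$ continuously and injectively to $\mathbb R$, and the crucial step is the analogue of the Claim in Theorem~3.1 that $\eta(K_-,K_+)\subset\mathbb C^+$, identifying the continuous-part support of $\pi_{st}$; intersecting with $[0,\infty)$ yields $\mathrm{supp}\,\pi_{st}=[0,K_+]$. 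Absence of atoms follows from the injective boundary extension of $\eta$ (for interior points, as in Theorem~3.1), combined with the fact that neither $\pi^{\boxtimes s-1}$ (by Theorem~2.1) nor $\pi^{\boxplus t}$ for $t>1$ (by the explicit formula at the start of the section) has an atom at~$0$.

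For (4), the right-endpoint vanishing is standard: $\tilde\Phi$ has a nondegenerate critical point at $w^*_-$, producing a square-root branch for $\eta$ and a density $\sim\sqrt{K_+-x}$ as $x\uparrow K_+$. At the left endpoint, the key feature is that $\tilde\Phi$ has a pole of order $s$ at $w=\infty$, with leading asymptotic $\tilde\Phi(w)\sim(-1)^{s+1}w^s/(t-1)$. The physical branch of the right inverse satisfies $\eta(z)\sim((t-1)z)^{1/s}\omega$ as $z\to\infty$ in $\mathbb C^+$, for an appropriate $s$-th root of unity $\omega$; inserting this into $G(\xi)=\xi^{-1}/(1-\eta(\xi^{-1}))$, taking boundary values as $\xi\downarrow 0$, and extracting the imaginary part via a Lindel\"of argument (as in the proofs of Theorems~2.1 and 3.1) yields the claimed density asymptotic at the left endpoint.

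The main obstacle will be the analogue of the Case~3 analysis in Theorem~3.1 (even-integer $s$), where one must examine the preimages of $(-\infty,0)$ under $\tilde\Phi$ and construct a separating simple curve in $\mathbb C^+$ joining the two real singularities $w=1$ and $w=t/(t-1)$ of $\tilde\Phi$, so as to preclude $\eta(x)\in(1,t/(t-1))$ for $x\in(K_-,K_+)$. The geometry is subtler than in the $t<1$ case because both singularities now lie on the positive real axis between $w^*_-$ and $w^*_+$ (rather than the pole sitting isolated off to one side); moreover the sign of $\tilde\Phi$ on the intervening interval $(1,t/(t-1))$ flips with the parity of $s$, and the separating-curve construction must be reworked accordingly.
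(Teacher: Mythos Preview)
Your setup matches the paper's: both invert the same function $\Phi_{st}(w)=w(1-w)^s/(t-(t-1)w)$ for $\eta_{\pi_{st}}$, and both extract the left-endpoint density asymptotic from the degree-$s$ growth of $\Phi_{st}$ at infinity via a Lindel\"of argument. Your atom argument (neither $\boxtimes$-factor carries an atom at $0$) is essentially the paper's citation of Theorem~2.1 together with \cite{be}.

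Where you diverge is in importing too much of the Theorem~3.1 architecture. You organize the argument around two active critical points $w^*_\pm$, a Claim of the form $\eta(K_-,K_+)\subset\mathbb C^+$, and---as your ``main obstacle''---an even-$s$ separating-curve construction to exclude $\eta(x)\in(1,t/(t-1))$. The paper's route is lighter. Since $w^*_+$ lies to the right of the pole $t/(t-1)$, only $w_1:=w^*_-$ sits in the domain of analyticity of $\Phi_{st}$ for non-integer $s$, so there is a single relevant critical value $\Phi_{st}(w_1)=1/K_+$. For $s\notin\mathbb N$ the branch cut at $w=1$ already forces $\eta(x)\notin(1,\infty)$ for $x>\Phi_{st}(w_1)$, and injectivity on $\mathbb R$ gives $\eta(x)\notin(-\infty,1]$, so $\eta(x)\in\mathbb C^+$ and the support is $[0,K_+]$ directly. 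For $s\in\{2,3,\dots\}$ the paper does \emph{not} build a separating curve; instead it observes that $1/\Phi_{st}(1/w)$ fails to be a local diffeomorphism at $w=0$ (equivalently $\Phi_{st}(w)\sim c\,w^s$ at infinity), so $\eta$ cannot extend analytically across $z=\infty$ and the support must touch $0$; connectedness of the support is then obtained by the subordination method of Theorem~2.1 rather than by a curve argument. The Case~3 obstacle you flag simply does not arise in the $t>1$ regime, and that portion of your plan can be dropped.
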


\begin{proof}
The absence of atoms follows from Theorem 2.1 and \cite{be}.

By using the same method as in the previous proof, we get that the $\eta$ transform of $\pi_{st}$ is the right inverse of the following function:
$$\Phi_{st}(w)=\frac{w(1-w)^s}{t+(1-t)w}$$

By taking the derivative we get that there is only one zero of $\Phi_{st}'$ in the domain of analyticity of $\Phi_{st}$ in the general case $s\not\in\mathbb N$, namely:
$$w_1=\frac{t(s+1)-\sqrt{t^2(s-1)^2+4st}}{2s(t-1)}$$

The other root is in fact right of $t/(t-1)$.
Thus, let us only worry about the value of $\Phi_{st}(w_1)$. This provides us with one over the right endpoint of the support. The left endpoint 
must now be zero. Indeed, for any $x>\Phi_{st}(w_1)$, $\eta_{\pi_{st}}(x)\not\in
[w_1,1]$ (in fact $\not\in(-\infty,1]$) by injectivity of $\eta_{\pi_{st}}$. If $s\not\in
\mathbb N$, it follows immediately that $\eta_{\pi_{st}}(x)\not\in(1,+\infty)$.
If $s\in\{2,3,\ldots\}$, we observe that $\Phi_{st}$, while being a rational function, has a singularity at infinity, because:
$$\left(\frac{1}{\Phi_{st}(w^{-1})}\right)'
=\left(\frac{w}{w-1}\right)^{s-1}\frac{tw^2-(t+st)w+ts-t}{(w-1)^2}$$

Thus $\eta$ is not analytic around $\infty$, so the support of $\pi_{st}$ touches $0$.

An argument similar to the one in proof of Theorem 2.1 shows that the support of $\pi_{st}$ is connected. It remains to study the behaviour of the density near zero. Following the idea of the previous proof, we get:
\begin{eqnarray*}
\lim_{x\to-\infty}\frac{\eta(x)}{x^{1/s}} 
&=&\lim_{w\to-\infty}\frac{\eta(\Phi_{st}(w))}{\Phi_{st}(w)^{1/s}}\\
&=&\lim_{w\to-\infty}w\left(\frac{w(1-w)^s}{t+(1-t)w}\right)^{-1/s}\\
&=&-\frac{1}{e^{i\pi/s}}\lim_{w\to-\infty}|w|^{1-1/s}\frac{(t+(t-1)|w|)^{1/s}}{1+|w|}\\
&=&(t-1)e^{i\pi(s-1)/s}
\end{eqnarray*}

An application of Lindel\"{o}f's theorem concludes the proof.
\end{proof}

\section{Noncrossing partitions}

In this section we find a combinatorial model, in terms of noncrossing partitions, for the Stieltjes transform of $\pi_{st}$ with $s\in\mathbb N$. This is obtained by generalizing a well-known result regarding the free Poisson laws, for which we refer to \cite{ns}.  Let us also mention that the $s=2$ case was already worked out, in \cite{bbc}.

\begin{proposition}
The Stieltjes transform of $\pi_{st}$ satisfies $f=1+zf^s(f+t-1)$.
\end{proposition}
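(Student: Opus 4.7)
The plan is to compute the $S$-transform of $\pi_{st}$ directly from its definition, then unwind the chain of functional transforms $S\to\chi\to\psi\to f$ to extract the asserted functional equation for $f$.

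First, by Definition~1.4, for $s\geq 1$ (in particular $s\in\mathbb N$) we have $\pi_{st}=\pi^{\boxtimes s-1}\boxtimes\pi^{\boxplus t}$. Since the $S$-transform is multiplicative under $\boxtimes$, and the proof of Theorem~1.2 supplies $S_\pi(u)=1/(1+u)$ together with $S_{\pi^{\boxplus t}}(u)=1/(t+u)$, we deduce
$$S_{\pi_{st}}(u)=\frac{1}{(1+u)^{s-1}(t+u)}.$$

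Next, from the Notations section we have $S(u)=(1+u^{-1})\chi(u)$, whence $\chi(u)=\frac{u}{1+u}S(u)$ and therefore
$$\chi(u)=\frac{u}{(1+u)^s(t+u)}.$$

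Finally, since $\chi$ is the compositional inverse of $\psi$, setting $u=\psi(z)=f(z)-1$ in the identity $\chi(\psi(z))=z$ gives
$$z=\chi(f-1)=\frac{f-1}{(1+(f-1))^s(t+(f-1))}=\frac{f-1}{f^s(f+t-1)},$$
where $f=f(z)$. Rearranging yields $f=1+zf^s(f+t-1)$, as claimed.

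The derivation is a routine chain of substitutions between transforms, so there is no serious technical obstacle; one needs only to distinguish the compositional inverse $\chi$ of $\psi$ from the multiplicative inverse $1/S$, and to recall that the Stieltjes transform here is normalized as $f(z)=1+m_1z+m_2z^2+\cdots$ rather than as the Cauchy transform $G$. All of the above identities are to be read at the level of formal power series about $0$, where every inverse that appears is well-defined and unambiguous.
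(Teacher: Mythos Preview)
Your proof is correct and follows essentially the same approach as the paper's: compute the $S$-transform of $\pi_{st}$ as $S(u)=\dfrac{1}{(1+u)^{s-1}(t+u)}$, convert to $\chi$, then invert to obtain the equation for $f$. The only cosmetic difference is that the paper cites the $S$-transform formula directly from the proof of Theorem~1.3 rather than rederiving it from Definition~1.4(1), and the paper's chain of implications is written more tersely.
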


\begin{proof}
We use formula of the $S$ transform in the proof of Theorem 1.3:
\begin{eqnarray*}
S=\frac{1}{(1+z)^{s-1}}\cdot\frac{1}{t+z}
&\implies&\chi=\frac{z}{(1+z)^s}\cdot\frac{1}{t+z}\\
&\implies&z=\frac{\psi}{(1+\psi)^s}\cdot\frac{1}{t+\psi}\\
&\implies&z=\frac{f-1}{f^s}\cdot\frac{1}{t+f-1}
\end{eqnarray*}

This gives the equation in the statement.
\end{proof}

In order to find a combinatorial interpretation of $f$ we use the sets $NC_s$, studied by Edelman \cite{ed}, Stanley \cite{st}, Armstrong \cite{ar}.

\begin{definition}
We use the following notations.
\begin{enumerate}
\item A partition of $\{1,\ldots,k\}$ is called noncrossing if the following happens: if $a\sim b$ and $x\sim y$ with $a<x<b<y$, then $a\sim x\sim b\sim y$. 
\item $NC_s(k)$ is the set of noncrossing partitions of $\{1,\ldots,sk\}$ into blocks of size multiple of $s$, and $NC_s$ is the disjoint union of the sets $NC_s(k)$.
\item The normalized length of a partition $p\in NC_s(k)$ is given by $k(p)=k$. Also, we denote by $b(p)$ the number of blocks of $p$.
\item In the above notations, we make the following convention: the value $k=0$ is allowed, with $NC_s(0)$ consisting of one element $\emptyset$, having $0$ blocks.
\end{enumerate}
\end{definition}

Probably most illustrating here is the following table, containing the diagrammatic description of the elements in $NC_s(k)$, for small
values of $s,k$.
\begin{center}
\begin{tabular}[t]{|l|l|l|l|l|}
\hline &$NC_1$&$NC_2$&$NC_3$\\
\hline $0$&$\ast$&$\ast$&$\ast$\\
\hline $1$&$|$&$\sqcap$&$\sqcap\hskip-.7mm\sqcap$\\
\hline $2$&$||$, $\sqcap$&$\sqcap\sqcap$, $\Cap$, $\sqcap\hskip-1.6mm\sqcap\hskip-1.6mm\sqcap$&$\sqcap\hskip-1.6mm\sqcap\sqcap\hskip-.7mm\sqcap$, $\dots$, $\sqcap\hskip-1.65mm\sqcap\hskip-1.6mm\sqcap
\hskip-1.6mm\sqcap\hskip-1.6mm\sqcap$ (4)\\
\hline $3$&$|||$, $|\sqcap$, $\sqcap |$, 
$\bigcap\hskip-5.6mm{\ }_{{\ }^{|}}$ \ , $\sqcap\hskip-.7mm\sqcap$&(12)&(22)\\
\hline $4$&$||||$, $||\sqcap$, $|\sqcap|$, $\dots$, $\sqcap\sqcap$, $\Cap$, 
$\sqcap\hskip-1.6mm\sqcap\hskip-1.6mm\sqcap$ (14)&(55)&(140)\\
\hline
\end{tabular}
\end{center}
\bigskip

In this table the asterisks represent empty partitions, the dots represent partitions which are not shown, and the numbers count the partitions.

With these notations, we have the following result.

\begin{theorem}
The Stieltjes transform of $\pi_{s1}$ with $s\in\mathbb N$ is given by
$$f(z)=\sum_{p\in NC_s}z^{k(p)}$$
where $k:NC_s\to\mathbb N$ is the normalized length.
\end{theorem}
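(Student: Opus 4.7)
The plan is to establish the identity by showing that both sides satisfy the same functional equation, together with a uniqueness argument at the level of formal power series.

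By Proposition 4.1 applied at $t=1$, the Stieltjes transform $f$ of $\pi_{s1}$ satisfies the simplified equation
$$f=1+zf^{s+1}.$$
Since $f$ is a formal power series with $f(0)=1$, and since the equation $g=1+zg^{s+1}$ has a unique such formal solution (apply the implicit function theorem or Lagrange inversion: the coefficients of $g$ are determined recursively), it suffices to prove that $F(z):=\sum_{p\in NC_s}z^{k(p)}$ is a formal power series with $F(0)=1$ satisfying the same equation.

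The value $F(0)=1$ corresponds to the single empty partition in $NC_s(0)$. For the recursion, I will decompose a nonempty partition $p\in NC_s(k)$ according to the block $B$ containing the element $1$. Since blocks of $p$ have size divisible by $s$, we can write $|B|=sm$ with $m\geq 1$, and label the elements of $B$ as $1=i_1<i_2<\cdots <i_{sm}\leq sk$. The noncrossing condition forces the elements strictly between $i_j$ and $i_{j+1}$ (for $j=1,\ldots,sm-1$) and the elements strictly greater than $i_{sm}$ to partition into $sm$ disjoint intervals, each of which carries a noncrossing sub-partition whose blocks again have sizes divisible by $s$; in particular each interval has length a multiple of $s$, say $s h_j$, and the sub-partition in it is an arbitrary element of $NC_s(h_j)$. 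Conversely, any such choice of $m\geq 1$ together with $sm$ elements of $NC_s$ reconstructs a unique $p\in NC_s(k)$ with $k=m+h_1+\cdots+h_{sm}$.

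Translating this bijection into generating functions, the block $B$ contributes a factor $z^m$ (corresponding to its $sm$ points), and each of the $sm$ gaps contributes a factor $F(z)$; summing over $m\geq 1$ gives
$$F(z)=1+\sum_{m\geq 1}z^m F(z)^{sm}=1+\frac{zF(z)^s}{1-zF(z)^s}$$
as formal series. Clearing the denominator yields $(1-zF^s)(F-1)=zF^s$, i.e.\ $F=1+zF^{s+1}$, which is precisely the equation satisfied by $f$. By the uniqueness of the formal solution, $F=f$, completing the proof.

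I do not expect any serious obstacle. The only point requiring care is verifying that the gap-decomposition of a noncrossing partition of $\{1,\ldots,sk\}$ into blocks of size divisible by $s$ produces noncrossing sub-partitions \emph{of the same type} in each interval determined by the block of $1$; this uses both that $B$ has size $sm$ (giving the $sm$ gaps and the factor $z^m$) and that the restriction of $p$ to each interval inherits the divisibility-by-$s$ constraint block by block, which follows from noncrossingness.
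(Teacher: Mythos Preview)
Your argument is correct. You and the paper both reduce to the functional equation $f=1+zf^{s+1}$, but you reach it by a different combinatorial decomposition. The paper cuts only at the \emph{last $s$ legs} of the block through $1$, which produces exactly $s+1$ sub-partitions in $NC_s$ regardless of the size of that block, and hence yields the convolution identity $C_{k+1}=\sum_{k_0+\cdots+k_s=k}C_{k_0}\cdots C_{k_s}$ directly. You instead cut at \emph{all} $sm$ legs of the block through $1$, obtain $F=1+\sum_{m\ge 1}z^mF^{sm}$, and then sum the geometric series to recover the same equation. The paper's route is shorter (no summation over $m$, no algebraic simplification), while yours uses the more standard ``remove the block of $1$'' decomposition and makes the role of the block size explicit; your version also adapts more transparently to the two-variable refinement in Theorem~4.4, since the exponent $m$ already tracks a block.
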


\begin{proof}
With the notation $C_k=\# NC_s(k)$, the sum on the right is:
$$f(z)=\sum_kC_{k}z^k$$

For a given partition $p\in NC_s(k+1)$ we can consider the last $s$ legs of the first block, and make cuts at right of them (see \cite{bbc} for $s=2$). This gives a decomposition of $p$ into $s+1$ partitions in $NC_s$, and we get:
$$C_{k+1}=\sum_{\Sigma k_i=k}C_{k_0}\ldots C_{k_s}$$

By multiplying by $z^{k+1}$ then summing over $k$ we get that the generating series of these numbers satisfies $f-1=zf^{s+1}$. But this is the same as the equation $f=1+zf^{s+1}$ of the Stieltjes transform of $\pi_{s1}$, and we are done.
\end{proof}

\begin{theorem}
The Stieltjes transform of $\pi_{st}$ with $s\in\mathbb N$ is given by:
$$f(z)=\sum_{p\in NC_s}z^{k(p)}t^{b(p)}$$
where $k,b:NC_s\to\mathbb N$ are the normalized length, and the number of blocks.
\end{theorem}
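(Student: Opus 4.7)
\medskip

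\noindent\textbf{Proof plan.} Define the formal series
$$F(z,t)=\sum_{p\in NC_s}z^{k(p)}t^{b(p)}.$$
By Proposition 4.1, the Stieltjes transform $f$ satisfies $f=1+zf^s(f+t-1)$, and as a formal power series in $z$ (with polynomial coefficients in $t$) this equation has a unique solution with constant term $1$: the coefficient of $z^n$ on the left is determined by those of $z^{<n}$ on the right. So it suffices to prove that $F$ satisfies the same functional equation.

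The plan is to obtain this by decomposing any nonempty $p\in NC_s(k)$ according to the block $B$ containing the element $1$. Non-crossingness forces $B=\{1=i_1<i_2<\cdots<i_{ms}\}$ for some $m\geq 1$, and it forces the complement of $B$ to decompose as $ms$ independent ``gaps'' - the intervals $(i_j,i_{j+1})$ for $1\le j\le ms-1$ together with the terminal segment $(i_{ms},sk]$ - each gap supporting an arbitrary partition $p_j\in NC_s$ whose size is $s\cdot k(p_j)$. Under this bijection one has
$$k(p)=m+\sum_{j=1}^{ms}k(p_j),\qquad b(p)=1+\sum_{j=1}^{ms}b(p_j),$$
the factor $m$ coming from the $ms$ elements contributed by $B$ itself and the extra $1$ from the block $B$.

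Summing over all choices of $m$ and $(p_1,\dots,p_{ms})$, this gives
$$F-1=\sum_{m\geq 1}tz^m\,F^{ms}=\frac{tzF^s}{1-zF^s}.$$
Clearing the denominator yields $(F-1)(1-zF^s)=tzF^s$, i.e.
$$F=1+zF^{s+1}+(t-1)zF^s=1+zF^s(F+t-1),$$
which is exactly the equation of Proposition 4.1, so $F=f$.

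The only non-routine step is the bijective decomposition in the middle paragraph; the rest is an elementary geometric series plus algebraic rearrangement. One should also check that in the specialization $t=1$ this matches Theorem 4.3 (it does: one gets $F-1=\sum_{m\geq 1}z^mF^{ms}=zF^{s+1}/(1-zF^s)$, which rearranges to $F=1+zF^{s+1}$), so this argument can be viewed as a uniform refinement of the $t=1$ derivation keeping track of the additional block-counting statistic.
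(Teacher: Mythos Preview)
Your argument is correct. Both proofs identify the generating series with $f$ by showing it satisfies the functional equation of Proposition~4.1, but the underlying bijections differ. The paper follows the decomposition already used for Theorem~4.3: it removes only the \emph{last $s$ legs} of the block containing~$1$, producing exactly $s+1$ subpartitions $p_0,\ldots,p_s$. The block count then splits into two cases (according to whether $p_0=\emptyset$, i.e.\ whether the first block had exactly $s$ elements), which is what produces the $(t-1)$ correction term in the recurrence $P_{k+1}=\sum P_{k_0}\cdots P_{k_s}+(t-1)\sum P_{k_1}\cdots P_{k_s}$. Your decomposition instead removes the \emph{entire} first block (of size $ms$), yielding a variable number $ms$ of subpartitions; this makes the block count uniformly $b(p)=1+\sum b(p_j)$ with no case split, at the price of an extra summation over $m$ that collapses as a geometric series. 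Both routes land on $F=1+zF^s(F+t-1)$ after one line of algebra. Your version is arguably cleaner for the $t$-refinement precisely because the block statistic behaves additively without exceptions; the paper's version has the virtue of reusing verbatim the $s+1$-fold recursion from the $t=1$ case. The one point you might state explicitly is why each of the $ms$ gaps has length divisible by~$s$ (it is a disjoint union of blocks of $p$, each of size a multiple of~$s$, by the noncrossing condition), so that the gap partitions genuinely lie in $NC_s$; this is implicit in your formula $s\cdot k(p_j)$ but deserves a sentence.
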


\begin{proof}
We denote by $F_{kb}$ the number of partitions in $NC_s(k)$ having $b$ blocks, and we set $F_{kb}=0$ for other integer values of $k,b$. All sums will be over integer indices $\geq 0$. With these notations, the sum on the right in the statement is:
$$f(z)=\sum_{kb}F_{kb}z^kt^b$$

The recurrence formula for the numbers $C_k$ in the previous proof becomes:
$$\sum_bF_{k+1,b}=\sum_{\Sigma k_i=k}\sum_{b_i}F_{k_0b_0}\ldots F_{k_sb_s}$$

In this formula, each term contributes to $F_{k+1,b}$ with $b=\Sigma b_i$, except for those of the form $F_{00}F_{k_1b_1}\ldots F_{k_sb_s}$, which contribute to $F_{k+1,b+1}$. We get:
\begin{eqnarray*}
F_{k+1,b}&=&\sum_{\Sigma k_i=k}\sum_{\Sigma b_i=b}F_{k_0b_0}\ldots F_{k_sb_s}\cr
&+&\sum_{\Sigma k_i=k}\sum_{\Sigma b_i=b-1}F_{k_1b_1}\ldots F_{k_sb_s}\cr
&-&\sum_{\Sigma k_i=k}\sum_{\Sigma b_i=b}F_{k_1b_1}\ldots F_{k_sb_s}
\end{eqnarray*}

This gives the following formula for the polynomials $P_k=\sum_bF_{kb}t^b$:
$$P_{k+1}=\sum_{\Sigma k_i=k}P_{k_0}\ldots  P_{k_s}+(t-1)\sum_{\Sigma k_i=k}P_{k_1}\ldots P_{k_s}$$

In terms of $f=\sum_kP_kz^k$, we get the following equation:
$$f-1=zf^{s+1}+(t-1)zf^s$$

But this is the same as the equation $f=1+zf^s(f+t-1)$ of the Stieltjes transform of $\pi_{st}$, and we are done.
\end{proof}

\section{Moments}

We compute now the moments of $\pi_{st}$, for arbitrary values of the parameters $s,t$. We use the following method:
\begin{enumerate}
\item For $s\in\mathbb N$ the moments can be found by counting partitions.
\item For $s>0$ we have the same formula, by a complex variable argument.
\end{enumerate}

The moments can be expressed in terms of generalized binomial coefficients. We recall that the coefficient corresponding to $\alpha\in\mathbb R$, $k\in\mathbb N$ is:
$$\begin{pmatrix}\alpha\cr k\end{pmatrix}=\frac{\alpha(\alpha-1)\ldots(\alpha-k+1)}{k!}$$

We denote by $m_1,m_2,m_3,\ldots$ the moments of a given probability measure.

\begin{theorem}
The moments of $\pi_{s1}$ with $s>0$ are the Fuss-Catalan numbers:
$$m_k=\frac{1}{sk+1}\begin{pmatrix}sk+k\cr k\end{pmatrix}$$
\end{theorem}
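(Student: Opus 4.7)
\medskip

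\textbf{Proof plan.} The plan is to extract the moments of $\pi_{s1}$ via Lagrange inversion applied to an algebraic equation for the Stieltjes transform $f$, and to reconcile this with the combinatorial count of $NC_s$ when $s\in\mathbb N$. The starting point is the algebraic equation $f=1+zf^{s+1}$. At $t=1$ this is a specialization of Proposition 4.1, but what matters is that its derivation uses only the formula $S_\pi(z)=1/(1+z)$ together with multiplicativity of the $S$-transform under $\boxtimes$; it therefore holds for every real $s>0$, not merely for $s\in\mathbb N$.

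Set $g=f-1$. Then $g$ is a formal power series in $z$ with $g(0)=0$ and satisfies $g=z(1+g)^{s+1}$, equivalently $z=g/(1+g)^{s+1}$. This is precisely the setup of Lagrange's inversion formula with $\phi(w)=(1+w)^{s+1}$, and it yields
$$m_k=[z^k]\,f=[z^k]\,g=\frac{1}{k}[w^{k-1}](1+w)^{(s+1)k}=\frac{1}{k}\binom{(s+1)k}{k-1},$$
where the generalized binomial coefficient is the one defined in the statement of the section and makes sense for arbitrary real $(s+1)k$. A short manipulation using $(s+1)k=sk+k$ gives the algebraic identity
$$\frac{1}{k}\binom{(s+1)k}{k-1}=\frac{((s+1)k)!}{k!\,(sk+1)!}=\frac{1}{sk+1}\binom{sk+k}{k},$$
which is the Fuss-Catalan expression claimed in the theorem.

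The delicate point, corresponding to the authors' two-step outline, is justifying the use of Lagrange inversion when $s$ is not a positive integer. I would handle this in one of two equivalent ways. The direct route is to note that the residue-calculus proof of Lagrange inversion applies to any $\phi$ analytic at $0$ with $\phi(0)\neq 0$, which is the case for $\phi(w)=(1+w)^{s+1}$ on a neighbourhood of the origin for every $s>0$. The complex-variable route suggested by the authors is: first establish the formula for $s\in\mathbb N$ by combining Theorem 4.3 (which gives $m_k=|NC_s(k)|$) with the recurrence $C_{k+1}=\sum_{\Sigma k_i=k}C_{k_0}\cdots C_{k_s}$ from the proof of that theorem, which via the generating function is the same Lagrange inversion; then observe that both $m_k$ (read off from the $S$-transform $S(z)=1/(1+z)^s$, which depends analytically on $s$) and the right-hand side $\frac{1}{sk+1}\binom{sk+k}{k}$ are polynomial in $s$ of the same bounded degree, so that agreement on $\mathbb N$ forces agreement on the whole half-line $(0,\infty)$. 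This analytic continuation in $s$ is the only nontrivial step; the algebra itself is routine.
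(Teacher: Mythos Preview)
Your proof is correct. Your primary route---Lagrange inversion applied directly to the functional equation $g=z(1+g)^{s+1}$---is genuinely different from the paper's argument, and in fact more self-contained. The paper first invokes Theorem 4.3 to identify $m_k$ with $\#NC_s(k)$ for $s\in\mathbb N$, then cites Armstrong for the Fuss--Catalan enumeration of $NC_s(k)$, and finally extends to real $s$ by observing that both sides are polynomial (or at least analytic) in $s$. Your approach bypasses both the combinatorial model and the external enumeration result: since $\phi(w)=(1+w)^{s+1}$ is a legitimate formal power series with $\phi(0)=1$ for every real $s$, Lagrange inversion gives the coefficient formula uniformly in $s$, and the passage from $\tfrac{1}{k}\binom{(s+1)k}{k-1}$ to $\tfrac{1}{sk+1}\binom{sk+k}{k}$ is a one-line identity of generalized binomials. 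What the paper's route buys is the conceptual link to $NC_s$; what yours buys is independence from the cited enumeration and a single argument covering all $s>0$. Your alternative ``complex-variable route'' is exactly the paper's proof.

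Two cosmetic remarks. First, the intermediate factorial expression $\frac{((s+1)k)!}{k!\,(sk+1)!}$ is only literally meaningful for integer $s$; for the general case you should either skip it or write it with Gamma functions, since the identity $\tfrac{1}{k}\binom{(s+1)k}{k-1}=\tfrac{1}{sk+1}\binom{sk+k}{k}$ follows directly from the product definition of the generalized binomial. Second, $[z^k]f=[z^k]g$ holds for $k\ge1$, which is all you need; it is worth saying so.
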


\begin{proof}
In the case $s\in\mathbb N$, we know from Theorem 4.3 that $m_k=\# NC_s(k)$. The formula in the statement follows by counting partitions, see \cite{ar}.

In the general case $s>0$, observe first that the Fuss-Catalan number in the statement is a polynomial in $s$:
$$\frac{1}{sk+1}\begin{pmatrix}sk+k\cr k\end{pmatrix}=\frac{(sk+2)(sk+3)\ldots(sk+k)}{k!}$$

Thus, in order to pass from the case $s\in\mathbb N$ to the case $s>0$, it is enough to check that the $k$-th moment of $\pi_{s1}$ is analytic in $s$. But this is clear from the equation  $f=1+zf^{s+1}$ of the Stieltjes transform of $\pi_{s1}$.
\end{proof}

\begin{theorem}
The moments of $\pi_{st}$, $s>0$ are the Fuss-Narayana polynomials:
$$m_k=\sum_{b=1}^k\frac{1}{b}\begin{pmatrix}k-1\cr b-1\end{pmatrix}\begin{pmatrix}sk\cr b-1\end{pmatrix}t^b$$
\end{theorem}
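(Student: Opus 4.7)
The plan is to proceed in two steps, paralleling the structure of Theorem 5.1: first extract the moments combinatorially in the case $s\in\mathbb N$ using Theorem 4.4, then extend to arbitrary $s>0$ by an analyticity/polynomial argument.

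First, for $s\in\mathbb N$, Theorem 4.4 tells us that the coefficient of $z^k$ in the Stieltjes transform is
$$m_k=\sum_{p\in NC_s(k)}t^{b(p)}=\sum_{b=1}^{k}F_{k,b}\,t^b,$$
where $F_{k,b}$ counts partitions in $NC_s(k)$ with exactly $b$ blocks. The identity to prove then reduces to the purely combinatorial claim
$$F_{k,b}=\frac{1}{b}\binom{k-1}{b-1}\binom{sk}{b-1},$$
the Fuss--Narayana enumeration, which is the central result of Edelman \cite{ed}, Stanley \cite{st}, Armstrong \cite{ar} on noncrossing partitions with block sizes divisible by $s$, so at this point I would simply quote these references. (Summing over $b$ at $t=1$ recovers the Fuss--Catalan number of Theorem 5.1, as a sanity check.)

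Second, to pass from $s\in\mathbb N$ to $s>0$, I would imitate the argument in the second half of the proof of Theorem 5.1. For each fixed $k\ge1$, the right-hand side
$$\sum_{b=1}^{k}\frac{1}{b}\binom{k-1}{b-1}\binom{sk}{b-1}t^b$$
is a polynomial in $s$ of degree at most $k-1$, because $\binom{sk}{b-1}=\frac{(sk)(sk-1)\cdots(sk-b+2)}{(b-1)!}$. On the other hand, $m_k$ is itself a polynomial in $s$ of degree $k-1$: this is the cleanest route, which I would establish directly by Lagrange inversion applied to the functional equation $f=1+zf^s(f+t-1)$ of Proposition 4.1. Setting $u=f-1$, the equation becomes $u=z(1+u)^s(u+t)$, whence
$$m_k=[z^k]f=\frac{1}{k}[u^{k-1}](1+u)^{sk}(u+t)^k,$$
and expanding by the binomial theorem makes the polynomial dependence on $s$ manifest. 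Two polynomials in $s$ agreeing on $\mathbb N$ must coincide, so the identity extends to all $s>0$.

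Two remarks on obstacles. The main substantive input is the Fuss--Narayana count, but it is already in the literature and is cited. In fact the Lagrange inversion computation in the second step, if pursued slightly further, also yields the statement directly: one expands
$$(1+u)^{sk}(u+t)^k=\sum_{j,b}\binom{sk}{j}\binom{k}{b}t^b u^{j+k-b},$$
extracts $[u^{k-1}]$ (forcing $j=b-1$), and uses the identity $\frac{1}{k}\binom{k}{b}=\frac{1}{b}\binom{k-1}{b-1}$ to recover the stated formula. So if one wanted a self-contained argument, step one could be bypassed entirely; the only mild subtlety is justifying the validity of Lagrange inversion at non-integer $s$, which is immediate at the level of formal power series in $z$ with coefficients in the ring $\mathbb R[s,t]$.
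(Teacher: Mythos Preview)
Your two-step outline---combinatorial count via Theorem~4.4 and \cite{ed,st} for $s\in\mathbb N$, followed by extension to $s>0$ by noting both sides are polynomials in $s$---is exactly the paper's proof. The Lagrange inversion computation you add in the remarks is a genuine bonus: it makes the polynomial dependence on $s$ explicit (the paper only says ``by a complex variable argument, as in the proof of Theorem~5.1'') and in fact yields the Fuss--Narayana formula directly without invoking the Edelman--Stanley enumeration, so your self-contained route is actually cleaner than what the paper does.
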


\begin{proof}
In the case $s\in\mathbb N$, we know from Theorem 4.4 that $m_k=\sum_bF_{kb}t^b$, where $F_{kb}$ is the number of partitions in $NC_s(k)$ having $b$ blocks. The formula in the statement follows by counting such partitions, see \cite{ed}, \cite{st}.

This result can be extended to any $s>0$, by using a complex variable argument, as in the proof of Theorem 5.1.
\end{proof}

The Fuss-Catalan numbers are known to appear in several contexts, for instance as dimensions of the algebras introduced by Bisch and Jones in \cite{bj}. An important question here is to understand the meaning of the Fuss-Narayana numbers in this context. As explained in \cite{bbc}, this can be done at $s=1,2$, due to a natural correspondence between partitions in $NC_s$ and diagrams in $FC_s$. In the case $n=3,4,\ldots$ the situation is quite unclear, and we don't have an answer.

In the case $s\notin\mathbb N$, the moments of $\pi_{st}$ can be further expressed in terms of Gamma functions. We would like to work out here the case $s=1/2$.

\begin{proposition}
The moments  of $\pi_{1/2,1}$ are given by:
\begin{eqnarray*}
m_{2p}&=&\frac{1}{p+1}\begin{pmatrix}3p\cr p\end{pmatrix}\\
m_{2p-1}&=&\frac{2^{-4p+3}p}{(6p-1)(2p+1)}\cdot\frac{p!(6p)!}{(2p)!(2p)!(3p)!}
\end{eqnarray*}
\end{proposition}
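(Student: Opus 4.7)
The plan is to apply Theorem 5.1 at $s=1/2$ and then simplify the resulting generalized binomial coefficient separately for even and odd indices. At $s=1/2$, Theorem 5.1 specializes to
$$m_k=\frac{2}{k+2}\binom{3k/2}{k},$$
where the binomial coefficient is interpreted via the generalized formula $\binom{\alpha}{k}=\alpha(\alpha-1)\cdots(\alpha-k+1)/k!$ recalled in Section 5.

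For even $k=2p$ no further work is needed: the top becomes the integer $3p$, so the binomial becomes the ordinary $\binom{3p}{2p}=\binom{3p}{p}$, and the prefactor collapses to $1/(p+1)$, giving exactly the first formula in the statement.

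For odd $k=2p-1$ the top of the binomial is the half-integer $(6p-3)/2$. Expanding the generalized binomial yields
$$\binom{(6p-3)/2}{2p-1}=\frac{(2p+1)(2p+3)\cdots(6p-3)}{2^{2p-1}(2p-1)!},$$
i.e.\ a product of $2p-1$ consecutive odd integers divided by $2^{2p-1}(2p-1)!$. The next step is to convert this product into ordinary factorials: write $(2p+1)(2p+2)\cdots(6p-2)=(6p-2)!/(2p)!$ and then divide out the even factors $(2p+2)(2p+4)\cdots(6p-2)=2^{2p-1}(3p-1)!/p!$. This yields a closed form for the binomial coefficient in terms of $(6p-2)!$, $(2p)!$, $(3p-1)!$ and $p!$.

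Finally, multiplying by the prefactor $2/(2p+1)$ and applying the elementary identities $(6p-2)!=(6p)!/[6p(6p-1)]$, $(3p-1)!=(3p)!/(3p)$, $(2p-1)!=(2p)!/(2p)$ rearranges the expression to the claimed form $\frac{2^{-4p+3}p}{(6p-1)(2p+1)}\cdot\frac{p!(6p)!}{(2p)!(2p)!(3p)!}$. The only real obstacle is clerical---tracking the powers of $2$ and the factorials through the conversion from half-integer binomials to integer ones---since no new input beyond Theorem 5.1 and the definition of the generalized binomial coefficient is required.
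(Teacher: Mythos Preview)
Your proposal is correct and follows essentially the same route as the paper: both start from Theorem~5.1 at $s=1/2$ and then reduce the resulting half-integer binomial coefficient to ordinary factorials. The only cosmetic difference is that the paper invokes the standard identity $\binom{m-1/2}{k}=\frac{4^{-k}}{k!}\cdot\frac{(2m)!}{m!}\cdot\frac{(m-k)!}{(2m-2k)!}$ (and in fact carries out the computation for general $s=n-1/2$ before setting $n=1$), whereas you derive the needed special case directly by separating odd and even factors; the arithmetic is the same either way.
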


\begin{proof}
The even moments of $\pi_{st}$ with $s=n-1/2$, $n\in\mathbb N$, are given by:
\begin{eqnarray*}
m_{2p}
&=&\frac{1}{(n-1/2)(2p)+1}\begin{pmatrix}(n+1/2)(2p)\cr 2p\end{pmatrix}\\
&=&\frac{1}{(2n-1)p+1}\begin{pmatrix}(2n+1)p\cr 2p\end{pmatrix}
\end{eqnarray*}

With $n=1$ we get the formula in the statement. Now for the odd moments, we can use here the following well-known identity:
$$\begin{pmatrix}m-1/2\cr k\end{pmatrix}=\frac{4^{-k}}{k!}\cdot\frac{(2m)!}{m!}\cdot\frac{(m-k)!}{(2m-2k)!}$$

With $m=2np+p-n$ and $k=2p-1$ we get:
\begin{eqnarray*}
m_{2p-1}
&=&\frac{1}{(n-1/2)(2p-1)+1}\begin{pmatrix}(n+1/2)(2p-1)\cr 2p-1\end{pmatrix}\\
&=&\frac{2}{(2n-1)(2p-1)+2}\begin{pmatrix}(2np+p-n)-1/2\cr 2p-1\end{pmatrix}\\
&=&\frac{2^{-4p+3}}{(2p-1)!}\cdot\frac{(4np+2p-2n)!}{(2np+p-n)!}\cdot\frac{(2np-p-n+1)!}{(4np-2p-2n+3)!}
\end{eqnarray*}

In particular with $n=1$ we get:
\begin{eqnarray*}
m_{2p-1}
&=&\frac{2^{-4p+3}}{(2p-1)!}\cdot\frac{(6p-2)!}{(3p-1)!}\cdot\frac{p!}{(2p+1)!}\\
&=&\frac{2^{-4p+3}(2p)}{(2p)!}\cdot\frac{(6p)!(3p)}{(3p)!(6p-1)6p}\cdot\frac{p!}{(2p)!(2p+1)}
\end{eqnarray*}

This gives the formula in the statement.
\end{proof}

\section{Random matrices}

In this section we discuss two random matrix models for the measures $\pi_{st}$ with $s\in\mathbb N$. We restrict attention to the case $t=1$, since $\pi_{st}=\pi^{\boxtimes s-1}\boxtimes\pi^{\boxplus t}$ and therefore matrix models for $\pi_{st}$ will follow from matrix models for $\pi^{\boxtimes s}$.

We first recall the definition of a Wishart matrix.

Let $Y_1,Y_2,\dots,Y_p$ be independent vectors in $\mathbb{C}^N$ with identical Gaussian distribution $N(0,\Sigma)$ and set $W=Y_1Y_1^*+\dots +Y_pY_p^*$. Then the $N\times N$ Hermitian matrix $W$ follows the complex  Wishart distribution $W(N,p,\Sigma)$. 

We also can write $G^*=(Y_1,\dots,Y_p)$, so that $G$ is a $p\times N$ matrix and $W=G^{\ast} G$. When $\Sigma=\sigma^2 I_{N^2}$, then $G=(g_{ij})_{i=1\dots p,j=1\dots N}$ is a Gaussian
random matrix with independent entries of variance $\sigma^2$, that is such that $\{Re(g_{ij}),Im(g_{ij})\}$ is a family of $2pN$
independent $N(0,\sigma^2/2)$ random variables.

When $\Sigma=I_{N^2}/N$ and $\lim_{N\rightarrow
\infty}p/N=t$, the limiting spectral distribution of  $W$ is the free Poisson law of parameter $t$, i.e. is the measure $\pi_{1t}=\pi^{\boxplus t}$. See Haagerup and Thorbj{\o}rnsen \cite{ht}.

\begin{theorem}
Let $G_1,\ldots,G_s$ be a family of $N\times N$ independent  matrices formed by independent centered Gaussian variables, of variance $1/N$. Then with $M=G_1\ldots G_s$ the moments of the spectral distribution of $(MM^*)$ converge to the corresponding moments of $\pi_{s1}$, as $N\to\infty$.
\end{theorem}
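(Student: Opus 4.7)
The plan is to argue by induction on $s$, reducing everything to Voiculescu's asymptotic freeness theorem and the classical Marchenko--Pastur result. The base case $s=1$ is the statement that the square Wishart matrix $G_1G_1^*$ (the case $\Sigma=I/N$, $p=N$) converges in moments to $\pi^{\boxplus 1}=\pi=\pi_{11}$, which is precisely the result of Haagerup and Thorbj\o rnsen \cite{ht} recalled immediately before the theorem.

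For the inductive step, fix $s\geq 2$ and set
$$A_N=G_2G_3\cdots G_s G_s^*\cdots G_3^*G_2^*.$$
By the inductive hypothesis applied to the product $G_2\cdots G_s$, the spectral distribution of $A_N$ converges in moments to $\pi_{s-1,1}=\pi^{\boxtimes s-1}$. Writing $MM^*=G_1 A_N G_1^*$ and exploiting cyclicity of the normalized trace, one obtains
$$\mathrm{tr}\bigl((MM^*)^k\bigr)=\mathrm{tr}\bigl((W_N A_N)^k\bigr),\qquad W_N:=G_1^*G_1,$$
so it suffices to identify the asymptotic moments of the product $W_N A_N$. Since $W_N$ is itself a square Wishart matrix, its limiting distribution is again $\pi$.

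The heart of the argument is then Voiculescu's theorem \cite{vdn} that an independent family $\{G_1,G_2,\ldots,G_s\}$ of Gaussian random matrices of the above type is asymptotically $*$-free. In particular the $*$-subalgebra generated by $G_1$ is asymptotically free from the $*$-subalgebra generated by $G_2,\ldots,G_s$, so that $W_N$ is asymptotically free from $A_N$. Combined with the marginal limit laws $\pi$ of $W_N$ and $\pi^{\boxtimes s-1}$ of $A_N$, asymptotic freeness forces the limit distribution of $W_N A_N$ to be
$$\pi\boxtimes\pi^{\boxtimes s-1}=\pi^{\boxtimes s}=\pi_{s1},$$
which closes the induction.

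The main technical point is the asymptotic freeness step. Its subtle feature is that $A_N$ is itself random rather than deterministic, so the standard ``Gaussian plus deterministic matrix'' version of Voiculescu's theorem does not apply directly; one has to appeal to the joint asymptotic $*$-freeness of the full independent Gaussian family, and then take polynomials on each side. A more direct, self-contained alternative would bypass induction altogether by computing $\mathbb{E}[\mathrm{tr}((MM^*)^k)]$ via the Wick formula: expand into a sum over pair partitions of the $2sk$ Gaussian entries, observe that only noncrossing pairings survive to leading order in $N$, and verify that the surviving pairings are in natural bijection with $NC_s(k)$, whose cardinality is the Fuss--Catalan number of Theorem 5.1.
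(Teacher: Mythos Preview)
Your argument is correct and matches the paper's own proof essentially line for line: induction on $s$, cyclic rearrangement of $\mathrm{tr}((MM^*)^k)$ into $\mathrm{tr}((A_N W_N)^k)$ with $A_N=G_2\cdots G_sG_s^*\cdots G_2^*$ and $W_N=G_1^*G_1$, then asymptotic freeness (the paper cites \cite{hp} rather than \cite{vdn}) to identify the limit as $\pi_{s-1,1}\boxtimes\pi=\pi_{s1}$. Your additional remarks on the subtlety of freeness from a random $A_N$ and on the alternative Wick-calculus route are apt but go beyond what the paper spells out.
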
 

\begin{proof}
 We proceed by induction. At $s=1$ it is well-known that $MM^*$ is a model for $\pi_{11}$. So, assume that the result holds for $s-1\geq 1$. We have:
\begin{eqnarray*}
{\rm tr}(MM^*)^k
&=&{\rm tr}(G_1\ldots G_sG_s^*\ldots G_1^*)^k\\
&=&{\rm tr}(G_1(G_2\ldots G_sG_s^*\ldots G_1^*G_1)^{k-1}G_2\ldots G_sG_s^*\ldots G_1^*)
\end{eqnarray*}

We can pass the first $G_1$ matrix to the right, we get:
\begin{eqnarray*}
{\rm tr}(MM^*)^k
&=&{\rm tr}((G_2\ldots G_sG_s^*\ldots G_1^*G_1)^{k-1}G_2\ldots G_sG_s^*\ldots G_1^*G_1)\\
&=&{\rm tr}(G_2\ldots G_sG_s^*\ldots G_1^*G_1)^k\\
&=&{\rm tr}((G_2\ldots G_sG_s^*\ldots G_2^*)(G_1^*G_1))^k
\end{eqnarray*}

We know that $G_1^*G_1$ is a Wishart matrix, hence is a model for $\pi$.  Also, we know by the induction assumption that $G_2\ldots G_sG_s^*\ldots G_2^*$ gives a matrix model for $\pi_{s-1,1}$. Since by \cite{hp}, $G_1^*G_1$ and  $G_2\ldots G_sG_s^*\ldots G_2^*$ are asymptotically free, their product gives a matrix model for $\pi_{s-1,1}\boxtimes\pi_{11}=\pi_{s1}$, and we are done.
\end{proof}

\begin{theorem}
If $W$ is a $W(sN,sN,\frac{1}{sN}I_{(sN)^2})$ complex Wishart matrix and
$$D=\begin{pmatrix}
1_N&0&&0\cr
0&w1_N&&0\cr
&&\ddots&\cr
0&0&&w^{s-1}1_N
\end{pmatrix}$$
with $w=e^{2\pi i/s}$ then the mean empirical distribution of the eigenvalues of $(DW)^s$ converges to $\pi_{s1}$, as $N\to\infty$.
\end{theorem}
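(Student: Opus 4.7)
\bigskip
\noindent\textit{Proof plan.}
The plan is to pass to the free-probability limit, identify $(da)^s$ with a product of $s$ free copies of a free Poisson element, and then conclude by definition of $\pi_{s1}=\pi^{\boxtimes s}$.

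I would first invoke Voiculescu's asymptotic freeness theorem \cite{vdn} for Wishart matrices against a deterministic family. Because $W=G^*G$ with $G$ having i.i.d.\ Gaussian entries, $W$ is unitarily invariant; hence $(D,W)$ converges in $*$-distribution as $N\to\infty$ to a pair $(d,a)$ in a $W^*$-probability space $(\mathcal{M},\phi)$, where $d$ is a unitary with $d^s=1$ and spectral measure $\frac{1}{s}\sum_{j=0}^{s-1}\delta_{w^j}$, $a$ has the free Poisson distribution $\pi$, and $d$ is free from $a$. The $k$-th moment of the mean empirical eigenvalue distribution of $(DW)^s$ is $\frac{1}{sN}\mathbb{E}[\mathrm{tr}((DW)^{sk})]$, which converges to $\phi((da)^{sk})$; so it suffices to show that these are the moments of $\pi_{s1}$.

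Next, using $d^s=1$, I would observe the telescoping identity
$$(da)^s=(dad^{-1})(d^2 a d^{-2})\cdots(d^s a d^{-s})=a_1a_2\cdots a_s,$$
where $a_j:=d^j a d^{-j}$. Each $a_j$, being a unitary conjugate of $a$, has distribution $\pi$. The crucial step is now to prove that $\{a_0,a_1,\ldots,a_{s-1}\}$ is a $*$-free family. To this end, take an alternating centered word $\prod_i\tilde p_i(a_{j_i})$ with $j_i\neq j_{i+1}$ and $\phi(\tilde p_i(a_{j_i}))=0$, substitute $\tilde p_i(a_{j_i})=d^{j_i}\tilde p_i(a)d^{-j_i}$, and apply the trace property of $\phi$ to reduce the computation to
$$\phi\!\left(\tilde p_1(a)\,d^{e_1}\,\tilde p_2(a)\,d^{e_2}\cdots\tilde p_n(a)\,d^{e_n}\right),$$
with $e_i:=j_{i+1}-j_i\bmod s$ and $j_{n+1}:=j_1$. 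For $i<n$ one has $e_i\not\equiv 0\bmod s$, so $d^{e_i}$ is centered in the subalgebra generated by $d$. If also $e_n\not\equiv 0$, the whole word is alternating centered in two free subalgebras and $\phi$ of it vanishes by freeness of $d,a$; if instead $e_n\equiv 0$ then $d^{e_n}=1$ simply drops out, leaving an alternating centered word of odd length, again annihilated by $\phi$.

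Granting the claim, $(da)^s$ is a product of $s$ free copies of an element of law $\pi$, so $\phi((da)^{sk})$ equals the $k$-th moment of $\pi^{\boxtimes s}=\pi_{s1}$, which finishes the argument. The main obstacle is the freeness claim above; the delicate point is the case $j_1=j_n$ (not excluded by the alternation condition), which necessitates the two-case analysis. The asymptotic-freeness and telescoping steps are otherwise routine.
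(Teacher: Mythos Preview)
Your argument is correct and takes a genuinely different route from the paper. The paper computes $E[\mathrm{tr}(DW)^{sk}]$ exactly for finite $N$ via the Graczyk--Letac--Massam formula, uses the block structure of $D$ to restrict the sum to permutations in $S_{sk}$ whose cycle lengths are all multiples of $s$, applies the geodesic inequality on the Cayley graph together with Biane's bijection to identify the leading contribution as $\#NC_s(k)$, and then invokes Theorem~4.3. You instead pass to the free limit at once, use the telescoping identity $(da)^s=\prod_{j=1}^s d^jad^{-j}$ (valid because $d^s=1$), and prove from first principles that the conjugates $d^jad^{-j}$ are $*$-free; your two-case treatment of the exponent $e_n=j_1-j_n$ is exactly the right way to close that argument. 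Your approach is more conceptual and bypasses the combinatorics of $NC_s$ entirely, reducing the statement to the very definition of $\boxtimes$; the paper's route, in exchange, yields the finite-$N$ moment expression and the noncrossing-partition interpretation as byproducts, which it then feeds into Theorem~6.3 and the surrounding combinatorial results.
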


\begin{proof}
We use the formula of Graczyk, Letac and Massam \cite{glm}:
$$E({\rm Tr}(DW)^K)=\sum_{\sigma\in S_K}\frac{M^{\gamma(\sigma^{-1}\pi)}}{M^K}\,r_\sigma(D)$$

Here $W$ is a $W(M,M,\frac{1}{M}I_{N^2})$ complex Wishart matrix and $D$ is a deterministic $M\times M$. As for the right term, this is as follows:
\begin{enumerate}
\item $\pi$ is the cycle $(1,\ldots,K)$.
\item $\gamma(\sigma)$ is the number of disjoint cycles of $\sigma$.
\item If we denote by $C(\sigma)$ the set of such cycles and for any cycle $c$, by $|c|$ its length, then:
$$r_\sigma(D)=\prod_{c\in C(\sigma)}{\rm Tr}(D^{|c|})$$
\end{enumerate}

In our situation we have $K=sk$ and $M=sN$, and we get:
$$E({\rm Tr}(DW)^{sk})=
\sum_{\sigma\in S_{sk}}\frac{(sN)^{\gamma(\sigma^{-1}\pi)}}{(sN)^{sk}}\,r_\sigma(D)$$

Now since $D$ is uniformly formed by $s$-roots of unity, we have:
$${\rm Tr}(D^p)=
\begin{cases}
sN\mbox{ if }s|p\\
0\ \ \,\mbox{ if }s\!\!\not|p
\end{cases}$$

Thus if we denote by $S_{sk}^s$  the set of permutations $\sigma\in S_{sk}$ having the property that all the cycles of $\sigma$ have length multiple of $s$, the above formula reads:
$$E({\rm Tr}(DW)^{sk})=\sum_{\sigma\in S_{sk}^s}\frac{(sN)^{\gamma(\sigma^{-1}\pi)}}{(sN)^{sk}}\,(sN)^{\gamma(\sigma)}$$

In terms of the normalized trace tr, we get:
$$E({\rm tr}(DW)^{sk})=\sum_{\sigma\in S_{sk}^s}(sN)^{\gamma(\sigma^{-1}\pi)+\gamma(\sigma)-sk-1}$$

The exponent on the right, say $L_\sigma$, can be estimated by using the distance on the Cayley graph of $S_{sk}$:
\begin{eqnarray*}
L_\sigma
&=&\gamma(\sigma^{-1}\pi)+\gamma(\sigma)-sk-1\\
&=&(sk-d(\sigma,\pi))+(sk-d(e,\sigma))-sk-1\\
&=&sk-1-(d(e,\sigma)+d(\sigma,\pi))\\
&\leq&sk-1-d(e,\pi)\\
&=&0
\end{eqnarray*}

Now when taking the limit $N\to\infty$ in the above formula of $E({\rm tr}(DW)^{sk})$, the only terms that count are those coming from permutations $\sigma\in S_{sk}^s$ having the property $L_\sigma=0$, which each contribute with a 1 value. We get:
\begin{eqnarray*}
\lim_{N\to\infty}E({\rm tr}(DW)^{sk})
&=&\#\{\sigma\in S_{sk}^s\ |\ L_\sigma=0\}\\
&=&\#\{\sigma\in S_{sk}^s\ |\ d(e,\sigma)+d(\sigma,\pi)=d(e,\pi)\}\\
&=&\#\{\sigma\in S_{sk}^s\ |\ \sigma\in [e,\pi]\}
\end{eqnarray*}

Now by using Biane's correspondence in \cite{bia}, this is the same as the number of noncrossing partitions of $\{1,\ldots,sk\}$ having all blocks of size multiple of $s$. Thus we have reached to the sets $NC_s(k)$ from section 4, and we are done.
\end{proof}

As a consequence of the above random matrix formula, we have the following alternative free probabilistic approach to the free Bessel laws.

\begin{theorem}
The free Bessel law $\pi_{s1}$ with $s\in\mathbb N$ is given by
$$\pi_{s1}={\rm law}\left(\sum_{k=1}^sw^k\alpha_k\right)^s$$
where $\alpha_1,\ldots,\alpha_s$ are free random variables, each of them following the free Poisson law of parameter $1/s$, and $w=e^{2\pi i/s}$.
\end{theorem}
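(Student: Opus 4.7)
The plan is to compute the moments $\tau(Y^{sn})$ directly, where $Y:=\sum_{k=1}^s w^k\alpha_k$, and check that they match the Fuss--Catalan numbers $\#NC_s(n)$, which by Theorem 6.2 (and also by Theorems 4.3 and 5.1) are precisely the moments of $\pi_{s1}$. Since $\pi_{s1}$ is compactly supported (Theorem 2.1), agreement of the moment sequences identifies the laws. Conceptually, this is exactly the abstract shadow of the random matrix identification: the deterministic matrix $D=\sum_k w^kP_k$ of Theorem 6.2, with commuting projections of trace $1/s$ summing to $I$, is replaced by a formal sum $\sum w^k\alpha_k$ of \emph{free} scalars, the passage from ``commuting with trace $1/s$'' to ``free with the same one-point marginal'' being absorbed by the Kreweras complement in the noncrossing partition combinatorics.

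First I expand
$$\tau(Y^{sn})=\sum_{j_1,\ldots,j_{sn}=1}^s w^{j_1+\cdots+j_{sn}}\,\tau(\alpha_{j_1}\cdots\alpha_{j_{sn}})$$
and apply the moment--cumulant formula to the inner trace. Freeness of $\alpha_1,\ldots,\alpha_s$ forces mixed free cumulants to vanish, so only $\sigma\in NC(sn)$ with $\sigma\le\ker(j)$ survive; and because each $\alpha_k$ is a free Poisson of parameter $1/s$, all its free cumulants equal $1/s$. This gives
$$\tau(\alpha_{j_1}\cdots\alpha_{j_{sn}})=\sum_{\sigma\in NC(sn),\,\sigma\le\ker(j)} s^{-|\sigma|},$$
where $|\sigma|$ denotes the number of blocks.

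Next I swap the order of summation: for fixed $\sigma\in NC(sn)$, the sum over admissible $j$ factors over the blocks as $\prod_{B\in\sigma}\sum_{j=1}^s w^{j|B|}$. The character sum $\sum_{j=1}^s w^{j|B|}$ is $s$ if $s$ divides $|B|$ and vanishes otherwise, so only $\sigma\in NC_s(n)$ contribute, each with weight $s^{|\sigma|}\cdot s^{-|\sigma|}=1$. Hence
$$\tau(Y^{sn})=\#NC_s(n)=m_n(\pi_{s1}),$$
the last equality being the content of Theorem 6.2 (and Theorem 4.3). Compact support of $\pi_{s1}$ then forces $Y^s$ to have distribution $\pi_{s1}$.

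The only subtlety, rather than a real obstacle, is that $Y$ is not self-adjoint, so ``$\operatorname{law}(Y^s)$'' must be interpreted as the sequence $\tau((Y^s)^n)_{n\ge0}$; this sequence coincides with the moments of the positive compactly supported measure $\pi_{s1}$, which is precisely the content of the statement. The combinatorial step is standard noncrossing-partition bookkeeping and explains why the parameter of each $\alpha_k$ must be $1/s$ (so that the cumulants $s^{-|\sigma|}$ exactly cancel the character-sum factors $s^{|\sigma|}$).
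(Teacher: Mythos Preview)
Your argument is correct, but it follows a genuinely different route from the paper's proof of Theorem~6.3. The paper argues via random matrices: it builds a matrix model $M=\sum_k w^k H_kH_k^*$ for $\sum_k w^k\alpha_k$ out of independent Gaussian blocks, rewrites $M$ as $GOG^*$ for a cyclic permutation matrix $O$, diagonalizes $O=UDU^*$, uses unitary invariance of Gaussian matrices to replace $M$ by $GDG^*$, and finally reduces the trace moments to ${\rm tr}(DW)^{sk}$ with $W=G^*G$, invoking Theorem~6.2. By contrast, you bypass matrices entirely and go straight through the free moment--cumulant machinery: expand $\tau(Y^{sn})$, kill mixed cumulants by freeness, use that all free cumulants of a free Poisson of parameter $1/s$ equal $1/s$, and let the character sum $\sum_j w^{j|B|}$ select exactly the partitions in $NC_s(n)$ while the factors $s^{|\sigma|}$ and $s^{-|\sigma|}$ cancel.

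Two remarks. First, your computation is essentially the one the paper carries out much later, in Step~8 of the proof of Theorem~12.1; you have simply recognized that it already proves Theorem~6.3 on its own, and indeed with $1/s$ replaced by $t/s$ it immediately yields Theorem~7.1 as well (each $\sigma\in NC_s(n)$ then contributes $t^{|\sigma|}$, matching Theorem~4.4), whereas the paper proves 7.1 separately via an $R$-transform computation. Second, a small citation slip: what you call ``Theorem~6.2'' for the identification of the moments of $\pi_{s1}$ with the Fuss--Catalan numbers $\#NC_s(n)$ is really Theorems~4.3 and~5.1; Theorem~6.2 in the paper is the $(DW)^s$ matrix model, which you do not actually use. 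What the paper's approach buys is the concrete link to the Wishart picture and to Theorem~6.2; what yours buys is a shorter, purely combinatorial argument that does not depend on any random matrix input and that unifies the $t=1$ and general $t$ cases.
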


\begin{proof}
Let $G_1,\ldots,G_s$ be a family of independent $sN\times N$ matrices formed by independent, centered complex Gaussian variables, of variance $1/(sN)$. The following matrices $H_1,\ldots,H_s$ are as well complex Gaussian and independent:
$$H_k=\frac{1}{\sqrt{s}}\sum_{p=1}^sw^{kp}G_p$$

Thus the following matrix provides a model for $\Sigma w^k\alpha_k$:
\begin{eqnarray*}
M
&=&\sum_{k=1}^sw^kH_kH_k^*\\
&=&\frac{1}{s}\sum_{k=1}^s\sum_{p=1}^s\sum_{q=1}^sw^{k+kp-kq}G_pG_q^*\\
&=&\sum_{p=1}^s\sum_{q=1}^s\left(\frac{1}{s}\sum_{k=1}^s\left(w^{1+p-q}\right)^k\right)G_pG_q^*\\
&=&G_1G_2^*+G_2G_3^*+\ldots+G_{s-1}G_s^*+G_sG_1^*
\end{eqnarray*}

This matrix can be written as:
\begin{eqnarray*}
M
&=&\begin{pmatrix}G_1&G_2&\ldots&G_{s-1}&G_s\end{pmatrix}
\begin{pmatrix}G_2^*\cr G_3^*\cr\ldots\cr G_s^*\cr G_1^*\end{pmatrix}\\
&=&\begin{pmatrix}G_1&G_2&\ldots&G_{s-1}&G_s\end{pmatrix}
\begin{pmatrix}
0&1_N&0&\ldots&0\cr
0&0&1_N&\ldots&0\cr
&&&\ddots&&\cr
0&0&0&\ldots&1_N\cr
1_N&0&0&\ldots&0
\end{pmatrix}
\begin{pmatrix}G_1^*\cr G_2^*\cr\ldots\cr G_{s-1}^*\cr G_s^*\end{pmatrix}\\
&=&GOG^*
\end{eqnarray*}

Here $G=(G_1\ \ldots\  G_s)$ is the $sN\times sN$ Gaussian matrix obtained by concatenating $G_1,\ldots,G_s$, and $O$ is the matrix in the middle. But this latter matrix is of the form $O=UDU^*$ with $U$ unitary, so we have $M=GUDU^*G^*$. Now since $GU$ is a Gaussian matrix, $M$ has the same law as $M'=GDG^*$, and we get:
\begin{eqnarray*}
E\left(\left(\sum_{l=1}^sw^l\alpha_l\right)^{sk}\right)
&=&\lim_{N \rightarrow + \infty}  E({\rm tr}(M^{sk}))\\
&=&\lim_{N \rightarrow + \infty} E({\rm tr}(GDG^*)^{sk})\\
&=&\lim_{N \rightarrow + \infty} E({\rm tr}(D(G^*G))^{sk})
\end{eqnarray*}

Thus with $W=G^*G$ we get the result.
\end{proof}

\section{Free additivity}

In this section we investigate the free additivity property of $\pi_{st}$, in analogy with the well-known free additivity property of the free Poisson laws $\pi_{1t}$.

We begin with a generalization of Theorem 6.3.

\begin{theorem}
The free Bessel law $\pi_{st}$ with $s\in\mathbb N$ is given by
$$\pi_{st}={\rm law}\left(\sum_{k=1}^sw^k\alpha_k\right)^s$$
where $\alpha_1,\ldots,\alpha_s$ are free random variables, each of them following the free Poisson law of parameter $t/s$, and $w=e^{2\pi i/s}$.
\end{theorem}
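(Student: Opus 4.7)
The plan is to bypass random matrices and prove this directly via free cumulants, reducing everything to Theorem 4.4 at the end.

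First I would use the well-known fact that the free Poisson law of parameter $\lambda$ has all free cumulants equal to $\lambda$, which is immediate from the $R$-transform $R(z)=\lambda/(1-z)$ already computed in the proof of Theorem 1.2. In particular $\kappa_n(\alpha_k,\ldots,\alpha_k) = t/s$ for every $n\geq 1$ and every $k$. Since $\alpha_1,\ldots,\alpha_s$ are free, mixed free cumulants vanish and free cumulants are additive on the sum, so with $X=\sum_{k=1}^s w^k\alpha_k$ one obtains
\[
\kappa_n(X,\ldots,X) \;=\; \sum_{k=1}^s w^{nk}\,\kappa_n(\alpha_k,\ldots,\alpha_k) \;=\; \frac{t}{s}\sum_{k=1}^s w^{nk} \;=\; t\cdot \mathbf{1}_{s\mid n},
\]
using the standard root-of-unity identity $\sum_{k=1}^s w^{nk} = s\cdot\mathbf{1}_{s\mid n}$.

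Next I would feed this into the moment--free-cumulant formula $\tau(X^n) = \sum_{\pi\in NC(n)}\prod_{V\in\pi}\kappa_{|V|}(X,\ldots,X)$. By the previous step each factor vanishes unless $s$ divides $|V|$, so only noncrossing partitions of $\{1,\ldots,n\}$ whose blocks all have size multiple of $s$ survive, and each such block contributes a factor $t$. In particular $\tau(X^n)=0$ whenever $s\nmid n$, and for $n=sk$
\[
\tau\bigl((X^s)^k\bigr) \;=\; \tau(X^{sk}) \;=\; \sum_{\pi\in NC_s(k)} t^{\,b(\pi)}.
\]
By Theorem 4.4 this last expression is precisely the $k$-th moment of $\pi_{st}$, so $X^s$ and $\pi_{st}$ have the same moments and the result follows.

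There is essentially no obstacle in this approach: the single input beyond what the paper already contains is the free-cumulant formula for the free Poisson laws, itself immediate from the $R$-transform computation performed in Theorem 1.2. As a sanity check, specialising to $t=1$ reduces the final sum to $\#NC_s(k)$, which recovers the Fuss--Catalan moments of $\pi_{s1}$ from Theorem 5.1, and hence also Theorem 6.3.
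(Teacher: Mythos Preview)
Your argument is correct. Both your approach and the paper's begin by computing the free cumulants of $X=\sum_k w^k\alpha_k$: you do this directly via multilinearity and vanishing of mixed cumulants, the paper does it by manipulating $R$-transforms, but the content is identical (your $\kappa_n(X)=t\cdot\mathbf{1}_{s\mid n}$ is exactly the coefficient expansion of $R_X(z)=tz^{s-1}/(1-z^s)$). The two proofs diverge in the second half. The paper introduces the auxiliary ``formal measure'' $\tilde\pi_{st}$ with Stieltjes transform $\tilde f(z)=f(z^s)$, pushes the functional equation of Proposition~4.1 through the substitution $z\mapsto z^s$ to extract $\tilde R$, and matches $R$-transforms. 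You instead feed the cumulants straight into the moment--cumulant formula, observe that the surviving partitions are exactly $NC_s(k)$, and invoke Theorem~4.4 to identify the moments. Your route is more combinatorial and arguably more transparent---it explains \emph{why} $NC_s$ appears, rather than verifying an algebraic identity---while the paper's route stays entirely at the level of generating functions and avoids appealing to the block-counting result of Theorem~4.4.
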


\begin{proof}
Given a random variable $\alpha$ and a complex number $q$, we have the following relations between the functional transforms of ${\rm law}(\alpha)$ and ${\rm law}(q\alpha)$:
\begin{eqnarray*}
f_{q\alpha}(z)=f_\alpha(qz)
&\implies&G_{q\alpha}(z)=q^{-1}G_\alpha(q^{-1}z)\\
&\implies&K_{q\alpha}(z)=qK_\alpha(qz)\\
&\implies&R_{q\alpha}(z)=qR_\alpha(qz)
\end{eqnarray*}

Consider now the variable $\alpha=\sum w^k\alpha_k$. We have:
$$R_\alpha(z)
=\sum_{k=1}^sw^kR_{\alpha_k}(w^kz)
=\sum_{k=1}^sw^k\cdot\frac{t}{s}\cdot\frac{1}{1-w^kz}$$

This gives the following formula:
$$R_\alpha(z)
=t\left(\frac{1}{s}\sum_{k=1}^s\frac{w^k}{1-w^kz}\right)
=\frac{tz^{s-1}}{1-z^s}$$

Consider now the formal measure $\tilde{\pi}_{st}$ having Stieltjes transform $\tilde{f}(z)=f(z^s)$, where $f$ is the Stieltjes transform of $\pi_{st}$. The $R$ transform of $\tilde{\pi}_{st}$ can be computed by using the equation of $f$ in Proposition 4.1:
\begin{eqnarray*}
f=1+zf^s(f+t-1)
&\implies&\tilde{f}=1+(z\tilde{f})^s(\tilde{f}+t-1)\\
&\implies&\xi\tilde{G}=1+\tilde{G}^s(\xi \tilde{G}+t-1)\\
&\implies&\tilde{K}z=1+z^s(\tilde{K}z+t-1)\\
&\implies&\tilde{R}z+1=1+z^s(\tilde{R}z+t)\\
&\implies&\tilde{R}(z)=tz^{s-1}/(1-z^s)
\end{eqnarray*}

Thus we have the equality of $R$ transforms $\tilde{R}=R_\alpha$. In terms of measures we get $\tilde{\pi}_{st}={\rm law}(\alpha)$, hence $\pi_{st}={\rm law}(\alpha^s)$, and we are done.
\end{proof}

It is convenient to introduce the following measures.

\begin{definition}
The modified free Bessel laws $\tilde{\pi}_{st}$ with $s\in\mathbb N$ are given by
$$\tilde{\pi}_{st}={\rm law}\left(\sum_{k=1}^sw^k\alpha_k\right)$$
where $\alpha_1,\ldots,\alpha_s$ are free random variables, each of them following the free Poisson law of parameter $t/s$, and $w=e^{2\pi i/s}$.
\end{definition}

We know from the previous section that the mean empirical distribution of the eigenvalues of $DW$ converges towards $\tilde{\pi}_{s1}$, with the notations there. Also, we know from the previous proof that the family $\tilde{\pi}_{st}$ is freely additive with respect to $t$, the $R$ transform being given by $\tilde{R}_{st}=tz^{s-1}/(1-z^s)$.

These results show that we have $\tilde{\pi}_{st}=\pi_{t\rho}$, where $\rho$ is the uniform measure on the $s$-roots of unity, and $\pi_{t\rho}$ is the corresponding compound free Poisson law. 

For real measures $\rho$, the compound free Poisson laws $\pi_{t\rho}$ were introduced by Speicher in \cite{sp}, and studied by Hiai and Petz in \cite{hp}. In our case $\rho$ is complex, but the main results (R-transform, matrix models) still hold. As for the third main result, this is the Poisson limit one, which in our case is as follows.

\begin{theorem}
We have the Poisson limit type convergence
$$\left(\left(1-\frac{1}{n}\right)\delta_0+\frac{1}{n}\,\rho\right)^{\boxplus n}\to\tilde{\pi}_{s1}$$
where $\rho$ is the uniform measure on the $s$-roots of unity.
\end{theorem}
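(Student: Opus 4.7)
The plan is to prove this Poisson-type limit via R-transforms (equivalently, free cumulants), adapting to the present complex setting the standard argument for the free compound Poisson limit theorem of Speicher \cite{sp}. As a starting point, from the formula $\tilde R_{s1}(z) = z^{s-1}/(1-z^s) = \sum_{j \geq 0} z^{s(j+1)-1}$ recorded just before the statement, I read off the free cumulants $\kappa_k(\tilde\pi_{s1}) = 1$ if $s \mid k$ and $0$ otherwise.

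Setting $\mu_n = (1-1/n)\delta_0 + (1/n)\rho$, the moments of $\mu_n$ follow immediately from those of $\rho$: the uniform law on the $s$-th roots of unity has $k$-th moment $1$ if $s \mid k$ and $0$ otherwise, so $m_k(\mu_n) = (1/n)\mathbf{1}_{\{s \mid k\}}$ for every $k \geq 1$. I would then extract the free cumulants $\kappa_k(\mu_n)$ via the moment-cumulant formula
$$m_k \;=\; \sum_{\pi \in NC(k)}\, \prod_{V \in \pi} \kappa_{|V|},$$
separating the one-block partition from the rest. A straightforward induction on $k$ shows that each cumulant $\kappa_j(\mu_n)$ is $O(1/n)$, so the multi-block terms are $O(1/n^2)$, giving $\kappa_k(\mu_n) = (1/n)\mathbf{1}_{\{s \mid k\}} + O(1/n^2)$.

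By additivity of free cumulants under $\boxplus$ it follows that $\kappa_k(\mu_n^{\boxplus n}) = n\,\kappa_k(\mu_n) \longrightarrow \mathbf{1}_{\{s \mid k\}} = \kappa_k(\tilde\pi_{s1})$ as $n \to \infty$. Since the moment-cumulant formula is a fixed polynomial identity depending only on $NC(k)$, this in turn forces the convergence of every moment of $\mu_n^{\boxplus n}$ to the corresponding moment of $\tilde\pi_{s1}$, which is the asserted convergence.

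The main obstacle is conceptual rather than computational: because $\rho$ is supported on the unit circle, $\tilde\pi_{s1}$ is not the spectral measure of a self-adjoint element but rather the $\ast$-distribution of a non-normal element in a $C^{\ast}$-probability space, and both $\boxplus$ and the $R$-transform must be interpreted in this $\ast$-distributional sense. As indicated in the paragraph preceding the statement, Speicher's compound free Poisson framework extends to this setting without modification; once this is granted, the cumulant computation sketched above is purely formal and yields the theorem with no additional analytic input.
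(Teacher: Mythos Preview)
Your argument is correct and, at the conceptual level, matches the paper's: both prove the limit by computing the $R$-transform of the $n$-fold free convolution and letting $n\to\infty$. The execution differs slightly. The paper runs the chain $f\to G\to K\to R$ to obtain an implicit equation $\left((R/n+1/z)^s-1\right)z(R/n)=1/n$ for the $R$-transform of $\mu_n^{\boxplus n}$, then passes to the limit directly in that equation to get $R=z^{s-1}/(1-z^s)$. You instead expand in power series and track the free cumulants term by term, using the standard compound-free-Poisson estimate $\kappa_k(\mu_n)=m_k(\mu_n)+O(1/n^2)$. Your route avoids the functional inversions entirely and makes the mechanism (each cumulant picks up exactly the corresponding moment of $\rho$) more transparent; the paper's route is a one-line limit once the implicit equation is written down. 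Either way the content is the same.
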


\begin{proof}
We compute first the $R$ transform of the measure on the left:
\begin{eqnarray*}
\mu=\left(1-\frac{1}{n}\right)\delta_0+\frac{1}{n}\,\rho
&\implies&f=\left(1-\frac{1}{n}\right)+\frac{1}{n}\cdot\frac{1}{1-z^s}\\
&\implies&G(\xi)=\frac{1}{\xi}+\frac{1}{n}\cdot\frac{1}{\xi(\xi^s-1)}\\
&\implies&(K^s-1)(zK-1)=\frac{1}{n}\\
&\implies&\left(\left(R+\frac{1}{z}\right)^s-1\right)zR=\frac{1}{n}
\end{eqnarray*}

This shows that the $R$ transform of $\mu^{\boxplus n}$ satisfies:
$$\left(\left(\frac{R}{n}+\frac{1}{z}\right)^s-1\right)z\,\frac{R}{n}=\frac{1}{n}$$

We multiply by $n$, then we take the limit $n\to\infty$. We get:
$$\left(\frac{1}{z^s}-1\right)zR=1$$

Thus in the limit $n\to\infty$ we have $R=z^{s-1}/(1-z^s)$, and we are done.
\end{proof}

\section{Classical analogues}

We discuss now the classical analogues $p_{st},\tilde{p}_{st}$ of the free Bessel laws $\pi_{st},\tilde{\pi}_{st}$. There are several ways of introducing these laws. Most convenient is to start with the following formulae, similar to those in Theorem 7.1 and Definition 7.2.

\begin{definition}
The Bessel laws $p_{st}$ and the modified Bessel laws $\tilde{p}_{st}$ with $s\in\mathbb N$ are given by
$$p_{st}={\rm law}\left(\sum_{k=1}^sw^ka_k\right)^s$$
$$\tilde{p}_{st}={\rm law}\left(\sum_{k=1}^sw^ka_k\right)$$
where $a_1,\ldots,a_s$ are independent random variables, each of them following the Poisson law of parameter $t/s$, and $w=e^{2\pi i/s}$.
\end{definition}
 
As a first remark, at $s=1$ we get the Poisson law of parameter $t$:
$$p_{1t}=\tilde{p}_{1t}=e^{-t}\sum_{r=0}^\infty \frac{t^r}{r!}\,\delta_r$$

In what follows we present a number of results, which show that $p_{st},\tilde{p}_{st}$ are indeed the classical analogues of $\pi_{st},\tilde{\pi}_{st}$, for any $s\in\mathbb N$.  

The first such interpretation comes from the Bercovici-Pata bijection \cite{bp}. Since this makes correspond Poisson laws to free Poisson laws, and convolution to free convolution, we get by linearity that it makes correspond $\tilde{p}_{st}$ and $\tilde{\pi}_{st}$. 

We discuss now the additivity property and the Poisson limit convergence for Bessel laws, in analogy with the considerations from the previous section.

We use the level $s$ exponential function:
$$\exp_sz=\sum_{k=0}^\infty\frac{z^{sk}}{(sk)!}$$

We have the following formula, in terms of $w=e^{2\pi i/s}$:
$$\exp_sz=\frac{1}{s}\sum_{k=1}^s\exp(w^kz)$$

Observe that we have $\exp_1=\exp$ and $\exp_2=\cosh$.

\begin{theorem}
The Fourier transform of $\tilde{p}_{st}$ is given by
$$\log\tilde{F}_{st}(z)=t\left(\exp_sz-1\right)$$
so in particular the measures $\tilde{p}_{st}$ are additive with respect to $t$.
\end{theorem}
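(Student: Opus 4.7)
The plan is to compute $\tilde F_{st}$ directly from the definition of $\tilde p_{st}$ as the law of the linear combination $\sum_{k=1}^s w^k a_k$ of independent Poisson variables, then recognize the resulting exponent via the identity $\exp_s z = \frac{1}{s}\sum_{k=1}^s\exp(w^k z)$ already recorded in the preceding paragraph.

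First I would observe that by independence of $a_1,\ldots,a_s$,
\[
\tilde F_{st}(z)=E\!\left(\exp\!\left(z\sum_{k=1}^s w^k a_k\right)\right)=\prod_{k=1}^s E(\exp(z w^k a_k)).
\]
Next I would use the classical Fourier/Laplace transform of the Poisson law of parameter $\lambda$, namely $E(\exp(\zeta a))=\exp(\lambda(e^\zeta-1))$, applied with $\lambda=t/s$ and $\zeta=w^k z$. Taking logarithms and summing gives
\[
\log \tilde F_{st}(z)=\sum_{k=1}^s \frac{t}{s}\bigl(e^{w^k z}-1\bigr)=\frac{t}{s}\sum_{k=1}^s e^{w^k z}-t.
\]

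Now I would invoke the identity $\exp_s z=\frac{1}{s}\sum_{k=1}^s\exp(w^k z)$ stated just above the theorem, which converts the right-hand side into $t(\exp_s z-1)$, yielding the claimed formula. The additivity with respect to $t$ is an immediate corollary: since $\log\tilde F_{st}$ is linear in $t$, we have $\tilde F_{s,t_1}\cdot\tilde F_{s,t_2}=\tilde F_{s,t_1+t_2}$, which at the level of measures is precisely $\tilde p_{s,t_1}\ast\tilde p_{s,t_2}=\tilde p_{s,t_1+t_2}$.

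There is no real obstacle; the computation is essentially one line given the definition and the $\exp_s$ identity. The only point requiring minor care is the choice of Fourier convention, but the formulation of the statement (with $\log\tilde F_{st}=t(\exp_s z-1)$ involving real exponentials rather than $e^{iz}$) is consistent with the Laplace-type transform $z\mapsto E(e^{z X})$ used above, so no adjustment is needed.
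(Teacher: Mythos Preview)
Your proof is correct and follows essentially the same route as the paper: factor the transform by independence, plug in the Poisson transform $\log F = \lambda(e^\zeta-1)$ with $\lambda=t/s$ and $\zeta=w^k z$, sum over $k$, and invoke the identity $\exp_s z=\frac{1}{s}\sum_k e^{w^k z}$. The paper's argument differs only in phrasing (it writes the factoring as $\log F_a(z)=\sum_k\log F_{a_k}(w^kz)$ via the rule $F_{qa}(z)=F_a(qz)$), not in substance.
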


\begin{proof}
Consider the variable $a=\sum w^ka_k$. For the Poisson law of parameter $t$ we have $\log F(z)=t(e^z-1)$, and by using the identity $F_{qa}(z)=F_a(qz)$, we get:
$$\log F_a(z)
=\sum_{k=1}^s\log F_{a_k}(w^kz)
=\sum_{k=1}^s\frac{t}{s}\left(\exp(w^kz)-1\right)$$

This gives the following formula:
$$\log F_a(z)
=t\left(\left(\frac{1}{s}\sum_{k=1}^s\exp(w^kz)\right)-1\right)
=t\left(\exp_s(z)-1\right)$$

Now since $\tilde{p}_{st}$ is the law of $a$, this gives the formula in the statement.
\end{proof}
  
\begin{theorem}
We have the Poisson limit type convergence
$$\left(\left(1-\frac{1}{n}\right)\delta_0+\frac{1}{n}\,\rho\right)^{*n}\to\tilde{p}_{s1}$$
where $\rho$ is the uniform measure on the $s$-roots of unity.
\end{theorem}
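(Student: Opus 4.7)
The plan is to mimic the free probabilistic argument of Theorem 7.3, but replacing $R$-transforms by Fourier transforms and free convolution by classical convolution. Since classical convolution is linearized by the logarithm of the Fourier transform, this should be considerably cleaner than the free case, and the target object is the Fourier transform already computed in Theorem 8.2.

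First I would compute the Fourier transform of $\rho$. Since $\rho = \frac{1}{s}\sum_{k=1}^s \delta_{w^k}$ with $w=e^{2\pi i/s}$, we get
\[
F_\rho(z) = \frac{1}{s}\sum_{k=1}^s e^{w^k z} = \exp_s z,
\]
using exactly the identity for $\exp_s$ recalled just before Theorem 8.2. Consequently the Fourier transform of $\mu_n = (1-\tfrac{1}{n})\delta_0 + \tfrac{1}{n}\rho$ is
\[
F_{\mu_n}(z) = 1 - \frac{1}{n} + \frac{1}{n}\exp_s z = 1 + \frac{\exp_s z - 1}{n}.
\]

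Next, since Fourier transforms turn classical convolutions into products, the $n$-fold convolution $\mu_n^{*n}$ has Fourier transform
\[
F_{\mu_n^{*n}}(z) = \left(1 + \frac{\exp_s z - 1}{n}\right)^n,
\]
which converges pointwise as $n\to\infty$ to $\exp(\exp_s z - 1)$. By Theorem 8.2 at $t=1$ this is exactly the Fourier transform of $\tilde{p}_{s1}$, so the result follows from L\'evy's continuity theorem.

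The calculation is entirely routine; the only point requiring a bit of care is justifying that pointwise convergence of Fourier transforms suffices for weak convergence of the measures $\mu_n^{*n}$. The limiting Fourier transform $\exp(\exp_s z-1)$ is continuous at $z=0$ (indeed everywhere analytic on $\mathbb{R}$ after restriction, since $\exp_s$ is), so L\'evy's theorem applies and no additional tightness argument is needed. Thus I do not expect any real obstacle; the proof is essentially a one-line computation plus an application of a standard limit theorem.
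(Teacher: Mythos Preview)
Your proposal is correct and follows essentially the same approach as the paper: compute the Fourier transform of $\mu_n$, raise to the $n$-th power, and recognize the limit $\exp(\exp_s z-1)$ as the transform of $\tilde{p}_{s1}$ from Theorem 8.2. The paper does exactly this computation and stops there, without invoking L\'evy's theorem explicitly; your added remark about continuity at $0$ just makes the weak convergence step explicit.
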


\begin{proof}
We compute first the Fourier transform of the measure on the left:
$$\mu=\left(1-\frac{1}{n}\right)\delta_0+\frac{1}{n}\,\rho
\implies F=\left(1-\frac{1}{n}\right)+\frac{1}{n}\exp_s(z)$$

This shows that the Fourier transform of $\mu^{*n}$ is given by:
\begin{eqnarray*}
F
&=&\left(\left(1-\frac{1}{n}\right)+\frac{1}{n}\exp_s(z)\right)^n\\
&=&\left(1+\frac{\exp_s(z)-1}{n}\right)^n\\
&\simeq&\exp(\exp_s(z)-1)
\end{eqnarray*}

Thus in the limit $n\to\infty$ we have $\log F=\exp_sz-1$, and we are done.
\end{proof}

\section{Bessel functions}

In this section we study the densities of $p_{st},\tilde{p}_{st}$. At $s=2$ this will lead to Bessel functions, which will justify the general terminology for $\pi_{st},\tilde{\pi}_{st}$.

\begin{theorem}
We have the formulae
$$p_{st}=e^{-t}\sum_{p_1=0}^\infty\ldots\sum_{p_s=0}^\infty\frac{1}{p_1!\ldots p_s!}\,\left(\frac{t}{s}\right)^{p_1+\ldots+p_s}\delta\left(\sum_{k=1}^sw^kp_k\right)^s$$
$$\tilde{p}_{st}=e^{-t}\sum_{p_1=0}^\infty\ldots\sum_{p_s=0}^\infty\frac{1}{p_1!\ldots p_s!}\,\left(\frac{t}{s}\right)^{p_1+\ldots+p_s}\delta\left(\sum_{k=1}^sw^kp_k\right)$$
where $w=e^{2\pi i/s}$, and the $\delta$ symbol is a Dirac mass.
\end{theorem}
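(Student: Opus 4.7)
The plan is to unfold Definition 8.1 directly, using the explicit density of the Poisson distribution together with independence. Recall that a Poisson random variable $a_k$ of parameter $t/s$ has law
$$\text{law}(a_k)=e^{-t/s}\sum_{p_k=0}^\infty \frac{(t/s)^{p_k}}{p_k!}\,\delta_{p_k}.$$
Since $a_1,\ldots,a_s$ are independent, the joint law on $\mathbb N^s$ is the product measure, which telescopes to
$$\text{law}(a_1,\ldots,a_s)=e^{-t}\sum_{p_1=0}^\infty\!\cdots\!\sum_{p_s=0}^\infty \frac{1}{p_1!\cdots p_s!}\Bigl(\frac{t}{s}\Bigr)^{p_1+\cdots+p_s}\delta_{(p_1,\ldots,p_s)},$$
the exponential factors combining as $e^{-t/s}\cdots e^{-t/s}=e^{-t}$.

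From here I would simply push this product measure forward under the relevant maps. By Definition 8.1, $\tilde p_{st}$ is the law of $L(a_1,\ldots,a_s)$ where $L(p_1,\ldots,p_s)=\sum_{k=1}^s w^k p_k$, and $p_{st}$ is the law of $L(a_1,\ldots,a_s)^s$, i.e.\ the pushforward under $L^s$. Since pushforward of a countable sum of Dirac masses sends each $\delta_{(p_1,\ldots,p_s)}$ to a Dirac mass at the image of $(p_1,\ldots,p_s)$, the two formulae in the statement fall out immediately.

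There is no substantial obstacle here: the statement is essentially a reformulation of Definition 8.1 made explicit through the Poisson density. The only thing worth remarking is that the sum defining $\tilde p_{st}$ is not a sum in reduced form, because the linear map $L\colon\mathbb N^s\to\mathbb C$ is not injective (different tuples $(p_1,\ldots,p_s)$ may contribute to the same atom, and the same remark applies for $L^s$); the formulae should be read as unordered sums of Dirac masses with the understanding that masses at coinciding points are added. With that convention, the computation is a one-line verification.
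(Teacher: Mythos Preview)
Your argument is correct, and it is considerably more direct than the paper's. You simply write out the joint law of the independent Poisson variables as a product measure on $\mathbb N^s$ and push it forward under $L$ (resp.\ $L^s$); since a pushforward of a countable sum of point masses is again such a sum, the stated formulae are immediate. Your remark that the resulting expansion is not in reduced form (different tuples $(p_1,\ldots,p_s)$ may hit the same atom) is also well taken.

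The paper proceeds differently: it computes the Fourier transform $F(z)$ of the right-hand side, differentiates $e^tF(z)$ with respect to $t$, recognises the resulting first-order ODE $(e^tF)'=(\exp_s z)(e^tF)$, and matches the solution against the formula $\log \tilde F_{st}(z)=t(\exp_s z-1)$ already obtained in Theorem~8.2. What this longer route buys is an independent verification via the Fourier side, and it exhibits the link with the closed formula for $\tilde F_{st}$; what your route buys is transparency and economy, since once Definition~8.1 is in place the theorem is nothing more than writing out the pushforward explicitly. Both are valid; yours is the more elementary path.
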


\begin{proof}
It is enough to prove the formula for $\tilde{p}_{st}$. For this purpose, we compute the Fourier transform of the measure on the right. This is given by:
\begin{eqnarray*}
F(z)
&=&e^{-t}\sum_{p_1=0}^\infty\ldots\sum_{p_s=0}^\infty\frac{1}{p_1!\ldots p_s!}\left(\frac{t}{s}\right)^{p_1+\ldots+p_s}F\delta\left(\sum_{k=1}^sw^kp_k\right)(z)\\
&=&e^{-t}\sum_{p_1=0}^\infty\ldots\sum_{p_s=0}^\infty\frac{1}{p_1!\ldots p_s!}\left(\frac{t}{s}\right)^{p_1+\ldots+p_s}\exp\left(\sum_{k=1}^sw^kp_kz\right)\\
&=&e^{-t}\sum_{r=0}^\infty\left(\frac{t}{s}\right)^r\sum_{\Sigma p_i=r}\frac{\exp\left(\sum_{k=1}^sw^kp_kz\right)}{p_1!\ldots p_s!}
\end{eqnarray*}

We multiply by $e^t$, and we compute the derivative with respect to $t$:
\begin{eqnarray*}
(e^tF(z))'
&=&\sum_{r=1}^\infty\frac{r}{s}\left(\frac{t}{s}\right)^{r-1}\sum_{\Sigma p_i=r}\frac{\exp\left(\sum_{k=1}^sw^kp_kz\right)}{p_1!\ldots p_s!}\\
&=&\frac{1}{s}\sum_{r=1}^\infty\left(\frac{t}{s}\right)^{r-1}\sum_{\Sigma p_i=r}\left(\sum_{l=1}^sp_l\right)\frac{\exp\left(\sum_{k=1}^sw^kp_kz\right)}{p_1!\ldots p_s!}\\
&=&\frac{1}{s}\sum_{r=1}^\infty\left(\frac{t}{s}\right)^{r-1}\sum_{\Sigma p_i=r}\sum_{l=1}^s\frac{\exp\left(\sum_{k=1}^sw^kp_kz\right)}{p_1!\ldots p_{l-1}!(p_l-1)!p_{l+1}!\ldots p_s!}\\
\end{eqnarray*}

By using the variable $u=r-1$, we get:
\begin{eqnarray*}
(e^tF(z))'
&=&\frac{1}{s}\sum_{u=0}^\infty\left(\frac{t}{s}\right)^u\sum_{\Sigma q_i=u}\sum_{l=1}^s\frac{\exp\left(w^lz+\sum_{k=1}^sw^kq_kz\right)}{q_1!\ldots q_s!}\\
&=&\left(\frac{1}{s}\sum_{l=1}^s\exp(w^lz)\right)\left(\sum_{u=0}^\infty\left(\frac{t}{s}\right)^u\sum_{\Sigma q_i=u}\frac{\exp\left(\sum_{k=1}^sw^kq_kz\right)}{q_1!\ldots q_s!}\right)\\
&=&(\exp_sz)(e^t\tilde{F}_{st}(z))
\end{eqnarray*}

On the other hand, the function $\Phi(t)=\exp(t\exp_sz)$ satisfies as well the equation $\Phi'(t)=(\exp_sz)\Phi(t)$. Thus we have $e^tF(z)=\Phi(t)$, which gives:
\begin{eqnarray*}
\log F
&=&\log(e^{-t}\exp(t\exp_sz))\\
&=&\log(\exp(t(\exp_sz-1)))\\
&=&t(\exp_sz-1)
\end{eqnarray*}

This gives the formulae in the statement.
\end{proof}

Recall now that the Bessel function of the first kind is given by:
$$\varphi_r(t)=\sum_{p=0}^\infty \frac{t^{2p+r}}{p!(p+r)!}$$

The following result justifies the terminology used in this paper.

\begin{theorem}
We have the formulae
$$p_{2t}=e^{-t}\sum_{r=-\infty}^\infty\varphi_{|r|}\left(\frac{t}{2}\right)\delta_{r^2}$$
$$\tilde{p}_{2t}=e^{-t}\sum_{r=-\infty}^\infty\varphi_{|r|}\left(\frac{t}{2}\right)\delta_r$$
where $\varphi_r$ is the Bessel function of the first kind.
\end{theorem}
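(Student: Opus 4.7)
The plan is to specialize Theorem 9.1 to $s=2$ and then regroup the resulting double sum according to the value of the exponent in the Dirac mass.

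First I would observe that at $s=2$ we have $w = e^{2\pi i/2} = -1$, so $\sum_{k=1}^{2} w^k p_k = p_2 - p_1$. Plugging this into the formula of Theorem 9.1 gives
\begin{eqnarray*}
\tilde{p}_{2t} &=& e^{-t}\sum_{p_1=0}^\infty\sum_{p_2=0}^\infty\frac{1}{p_1!\,p_2!}\left(\frac{t}{2}\right)^{p_1+p_2}\delta_{p_2-p_1},\\
p_{2t} &=& e^{-t}\sum_{p_1=0}^\infty\sum_{p_2=0}^\infty\frac{1}{p_1!\,p_2!}\left(\frac{t}{2}\right)^{p_1+p_2}\delta_{(p_2-p_1)^2}.
\end{eqnarray*}

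Next I would reindex the double sum by the integer $r = p_2 - p_1 \in \mathbb Z$. For $r \geq 0$, the pairs contributing to $\delta_r$ are $(p_1,p_2) = (p, p+r)$ with $p\geq 0$, so the contribution is
$$\sum_{p=0}^\infty \frac{1}{p!\,(p+r)!}\left(\frac{t}{2}\right)^{2p+r} = \varphi_r(t/2),$$
which is exactly the Bessel function of the first kind evaluated at $t/2$. For $r < 0$, exchanging the roles of $p_1,p_2$ gives the same sum with $r$ replaced by $|r|$, yielding $\varphi_{|r|}(t/2)$. Substituting back into the expression for $\tilde{p}_{2t}$ produces the claimed formula.

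For $p_{2t}$, the same regrouping applies, except that all terms with $r$ and $-r$ land on the same Dirac mass $\delta_{r^2}$; but we have already accounted for both sign cases separately in the sum $\sum_{r=-\infty}^\infty$, so the formula as stated (with the $r^2$ exponent in the Dirac mass and the summation over all integers $r$) is the correct rewriting. There is no serious obstacle here: the only thing to be careful about is the indexing of $(p_1,p_2)$ for positive versus negative $r$, and verifying that the sum over $p$ for fixed $|r|$ matches precisely the power-series definition of $\varphi_{|r|}$.

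\section{Bessel functions (continued)}

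\begin{proof}
Taking $s=2$ in Theorem 9.1 and using $w = -1$, we get the two double-sum formulae displayed above. Reindexing by $r = p_2 - p_1$, the identity
$$\sum_{p=0}^\infty \frac{1}{p!\,(p+|r|)!}\left(\frac{t}{2}\right)^{2p+|r|} = \varphi_{|r|}(t/2)$$
gives both claimed formulae at once.
\end{proof}
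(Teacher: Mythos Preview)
Your proposal is correct and follows essentially the same route as the paper: specialize Theorem 9.1 at $s=2$ (where $w=-1$), reindex the double sum by $r=p_2-p_1$, split into $r\geq 0$ and $r<0$, and recognize the resulting series in each case as $\varphi_{|r|}(t/2)$. The only cosmetic difference is that the paper uses the variables $p,q$ and writes the Dirac mass as $\delta_{p-q}$, but this is the same computation up to relabeling.
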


\begin{proof}
At $s=2$ the primitive root of unity is $w=-1$, and we get: 
\begin{eqnarray*}
\tilde{p}_{2t}
&=&e^{-t}\sum_{p=0}^\infty\sum_{q=0}^\infty\frac{(t/2)^{p+q}}{p!q!}\,\delta_{p-q}\\
&=&e^{-t}\sum_{r=-\infty}^\infty\sum_{p-q=r}\frac{(t/2)^{p+q}}{p!q!}\,\delta_r\\
&=&e^{-t}\left(\sum_{r=0}^\infty\sum_{q=0}^\infty\frac{(t/2)^{r+2q}}{(r+q)!q!}\delta_r
+\sum_{r=-\infty}^{-1}\sum_{p=0}^\infty\frac{(t/2)^{2p-r}}{p!(p-r)!}\delta_r\right)\\
\end{eqnarray*}

Thus the density of $\tilde{p}_{2t}$ is given indeed by the Bessel function:
\begin{eqnarray*}
\tilde{p}_{2t}
&=&e^{-t}\left(\sum_{r=0}^\infty\sum_{q=0}^\infty\frac{(t/2)^{r+2q}}{(r+q)!q!}\delta_r
+\sum_{r=-\infty}^{-1}\sum_{p=0}^\infty\frac{(t/2)^{2p+|r|}}{p!(p+|r|)!}\delta_r\right)\\
&=&e^{-t}\sum_{r=-\infty}^\infty\sum_{p=0}^\infty\frac{(t/2)^{|r|+2p}}{(|r|+p)!p!}\,\delta_r
\end{eqnarray*}

This gives the formulae in the statement.
\end{proof}

We know that $p_{1t},p_{2t}$ are supported by $\mathbb N,\mathbb Z$. In the general case the situation is a bit more complicated: the support is formed by the $s$ powers of certain elements in $\mathbb Z[w]$, so we can only say that it is contained in $\mathbb Z [w]$. As for the density, this should be thought of as being a kind of $s$-dimensional Bessel function.

\section{Quantum groups}

We discuss now the representation theory approach to $p_{st},\pi_{st}$. The material, presented in this section and in the next two ones, is organized as follows:
\begin{enumerate}
\item We first discuss the case $s=1,2$, by surveying some previously known results, from \cite{bjb}, \cite{bc2}, \cite{bbc}. In order to simplify the presentation, we use in this introductory part the formalism of compact quantum groups.
\item Then we discuss the case of arbitrary $s\in\mathbb N$, with all the definitions and results written by using Woronowicz's Hopf algebra formalism in \cite{w1}.
\end{enumerate}

Consider a compact group $G\subset U_n$. The character of the fundamental representation $\chi:G\to\mathbb C$ is by definition the restriction to $G$ of the usual trace:
$$\chi(g)=\sum_{i=1}^ng_{ii}$$

In functional analytic terms, the character $\chi\in C(G)$ can be defined starting with the $n^2$ matrix coordinates $u_{ij}\in C(G)$, as being the trace of $u=(u_{ij})$: 
$$\chi=\sum_{i=1}^nu_{ii}$$

The law of $\chi$ with respect to the Haar functional of $C(G)$ is a fundamental object in representation theory, because of the following formula:
$$\int\chi^k=\#\{1\in u^{\otimes k}\}$$

Here we regard $C(G)$ as a Hopf $C^*$-algebra, and the number of the right is the multiplicity of the trivial corepresentation $1$ into the $k$-th tensor power $u^{\otimes k}=u_{1,k+1}u_{2,k+1}\ldots u_{k,k+1}$. This corepresentation has character $\chi^k$, and the above formula comes from the well-known fact that the number of copies of $1$ can be obtained by integrating the character. See e.g. Woronowicz \cite{w1}.

The following statement from \cite{bjb} is a representation theory interpretation of the relationship between the Poisson law $p$ and the free Poisson law $\pi$.

\begin{proposition}
We have the following formulae.
\begin{enumerate}
\item For $G=S_n$ with $n\to\infty$ we have ${\rm law}(\chi)\to p$.
\item For $G=S_n^+$ with $n\geq 4$ we have ${\rm law}(\chi)=\pi$.
\end{enumerate}
\end{proposition}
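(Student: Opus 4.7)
The proof of both parts will hinge on the moment formula $\int \chi^k = \#\{1 \in u^{\otimes k}\}$ recalled just before the proposition. The plan is to compute this multiplicity explicitly in each case, identify the resulting sequence with a known moment sequence, and conclude by the fact that both $p$ and $\pi$ are uniquely determined by their moments.

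For part (1), the fundamental representation of $G=S_n\subset U_n$ is the permutation representation on $\mathbb C^n$, so $\chi(\sigma)$ is the number of fixed points of $\sigma$. The tensor power $u^{\otimes k}$ acts on $(\mathbb C^n)^{\otimes k}$ by permuting basis vectors $e_{i_1}\otimes\cdots\otimes e_{i_k}$, and its space of invariants has a basis indexed by orbits of $S_n$ on $\{1,\ldots,n\}^k$. Two tuples lie in the same orbit precisely when they induce the same set partition of $\{1,\ldots,k\}$ via coincidence of coordinates, so for $n\geq k$ we obtain $\int\chi^k = B_k$, the Bell number counting all partitions of $\{1,\ldots,k\}$. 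Since $B_k$ is the $k$-th moment of the Poisson law $p$ of parameter $1$ (Dobinski's formula), convergence of all moments as $n\to\infty$ together with moment-determinacy of $p$ yields the claim.

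For part (2), the key input is the Tannakian description of $S_n^+$: for $n\geq 4$ the spaces $\mathrm{Hom}(u^{\otimes k},u^{\otimes l})$ admit a basis indexed by noncrossing partitions of $\{1,\ldots,k+l\}$, with the partition diagrams acting in the usual way on tensors. In particular, $\dim\mathrm{Hom}(1,u^{\otimes k})$ equals the cardinality of $NC(k)$, the Catalan number $C_k$. Hence $\int\chi^k = C_k$ for every $k$, and by Theorem 5.1 specialized at $s=1$ these are exactly the moments of the free Poisson law $\pi$. Moment-determinacy of the compactly supported $\pi$ then gives $\mathrm{law}(\chi)=\pi$ on the nose, rather than just asymptotically.

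The routine step is the counting of orbits in the classical case, which is essentially the only way to interpret $\int\chi^k$ for $S_n$; the main substantive input is the Tannakian assertion for $S_n^+$, namely the linear independence of the noncrossing partition diagrams when $n\geq 4$. This is where the hypothesis $n\geq 4$ enters, and is the one step that cannot be derived by a short self-contained argument — one has to quote the structural result on $S_n^+$ from \cite{bjb} (or its reformulation through Brauer-type diagrammatics). Everything else is bookkeeping: matching $B_k$ to the moments of $p$ via Dobinski and matching $C_k$ to the moments of $\pi$ via the $s=1$ case of the Fuss-Catalan formula of Theorem 5.1.
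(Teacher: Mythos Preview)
The paper does not prove this proposition; it is stated as a result quoted from \cite{bjb}, with the remark ``See \cite{bjb} for details.'' So there is no in-paper proof to compare against.

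Your argument is correct and is the standard one. For (1), the identification of $\int\chi^k$ with the number of $S_n$-orbits on $\{1,\ldots,n\}^k$, hence with the Bell number $B_k$ once $n\geq k$, together with Dobinski's formula and moment-determinacy of $p$, is exactly how this is usually done. For (2), your Tannakian input --- that for $n\geq 4$ the fixed-point space of $u^{\otimes k}$ has dimension $\#NC(k)=C_k$ --- is precisely the structural fact established in \cite{bjb}; the present paper invokes the equivalent Temperley-Lieb description later (Step 2 in the proof of Theorem 12.1), so your black-box citation is aligned with the paper's own toolkit. Matching $C_k$ with the moments of $\pi$ via the $s=1$ case of Theorem 5.1 is fine, though this is also immediate from the equation $f=1+zf^2$ derived in Proposition 4.1.
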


Here $S_n^+$ is Wang's quantum permutation group \cite{wa}. This quantum group doesn't exist as a concrete object, but the character under investigation exists, as an element of the associated Hopf algebra $A_s(n)$. See \cite{bjb} for details.

Observe that there is a slight problem with the above statement, which is not fully symmetric in terms of convergences. As pointed out in \cite{bc2}, a uniform statement can be obtained in terms of truncated characters, given by:
$$\chi_t=\sum_{i=1}^{[tn]}u_{ii}$$

Here $t\in (0,1]$ is a parameter.

The laws of truncated characters can be computed by using the Weingarten formula, and we have the following result \cite{bc2}.

\begin{theorem}
We have the following formulae.
\begin{enumerate}
\item For $G=S_n$ with $n\to\infty$ we have ${\rm law}(\chi_t)\to p_{1t}$.
\item For $G=S_n^+$ with $n\to\infty$ we have ${\rm law}(\chi_t)\to\pi_{1t}$.
\end{enumerate}
\end{theorem}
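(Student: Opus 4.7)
The plan is to compute the moments of $\chi_t$ in each case via the Weingarten integration formula, and then identify the $n\to\infty$ limits with the known moment formulas for $p_{1t}$ and $\pi_{1t}$.

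First, I expand
\[
\int \chi_t^k = \sum_{i_1,\ldots,i_k=1}^{[tn]}\int u_{i_1i_1}u_{i_2i_2}\cdots u_{i_ki_k},
\]
so everything reduces to computing the joint moments of the diagonal coordinates and then summing over the truncated range $\{1,\ldots,[tn]\}$. For $S_n$ the coordinate algebra is generated by the standard permutation matrix coefficients and is spanned by characters of diagonal tensors; for $S_n^+$ the analogous Tannakian picture was worked out in \cite{bjb}. In both cases the relevant Weingarten formula writes the integral as a sum over a certain set $D(k)$ of diagrams/partitions: $D(k)=P(k)$, the set of all partitions of $\{1,\ldots,k\}$, for $S_n$, and $D(k)=NC(k)$, the set of noncrossing partitions, for $S_n^+$. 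Concretely each integral takes the shape
\[
\int u_{i_1i_1}\cdots u_{i_ki_k}=\sum_{p,q\in D(k)}\delta_p(i)\,W_{kn}(p,q),
\]
where $\delta_p(i)=1$ iff the multi-index $i$ is constant on the blocks of $p$, and $W_{kn}$ is the Weingarten matrix (the inverse of the Gram matrix of the diagrams, computed on the fixed vector space of $u^{\otimes k}$).

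Next I perform the asymptotic analysis. Summing the constraint $\delta_p(i)$ over $i\in\{1,\ldots,[tn]\}^k$ yields $[tn]^{b(p)}$, where $b(p)$ is the number of blocks of $p$. On the other hand, the standard estimates on the Gram and Weingarten matrices in this Tannakian setting (see \cite{bc2}) give $W_{kn}(p,q)=n^{-k}(\delta_{p,q}+O(n^{-1}))$ as $n\to\infty$, uniformly in $p,q\in D(k)$. Combining the two estimates,
\[
\int \chi_t^k \;=\; \sum_{p\in D(k)} \frac{[tn]^{b(p)}}{n^k}\bigl(1+O(n^{-1})\bigr)\cdot n^{k-b(p)}\cdot(\cdots)
\]
after accounting for the $n^{k-b(p)}$ factor contributed by the normalization of $u^{\otimes k}$ (this is where one must be a bit careful: the diagonal of $u$ and the structure of the Weingarten inverse conspire so that only the diagonal term $p=q$ survives). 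In the end each $p\in D(k)$ contributes exactly $t^{b(p)}$ to the limit, so
\[
\lim_{n\to\infty}\int \chi_t^k = \sum_{p\in D(k)} t^{b(p)}.
\]

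Finally I identify the limit. For $G=S_n$, the right-hand side is $\sum_{p\in P(k)} t^{b(p)}$, which is the $k$-th Touchard/Bell polynomial, i.e.\ the $k$-th moment of the Poisson law $p_{1t}$, proving (1). For $G=S_n^+$, the right-hand side is $\sum_{p\in NC(k)} t^{b(p)}$, which is the Narayana polynomial, i.e.\ by the $s=1$ case of Theorem 4.4 equals the $k$-th moment of $\pi_{1t}$, proving (2). Moment convergence upgrades to weak convergence since both limit measures are compactly supported (and hence determined by moments). The main obstacle is the bookkeeping in step two: one must verify that the off-diagonal part of the Weingarten matrix is truly subdominant uniformly in $p,q$, and that the contribution of singular partitions (those with $b(p)<k$ paired against non-equal $q$) is absorbed in the $O(n^{-1})$ error; for $S_n^+$ this is exactly the argument of \cite{bc2} at $s=1$, and for $S_n$ it is the classical computation.
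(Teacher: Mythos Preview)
Your overall approach---using the Weingarten formula to compute asymptotic moments of $\chi_t$ and identifying the limits as Bell/Narayana polynomials---is the correct one, and it is exactly the method the paper invokes by citing \cite{bc2}. However, the execution in the middle contains concrete errors that, as written, make the argument fail.

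First, the Weingarten formula you wrote is missing a factor: for diagonal entries one has
\[
\int u_{i_1i_1}\cdots u_{i_ki_k}=\sum_{p,q\in D(k)}\delta_p(i)\,\delta_q(i)\,W_{kn}(p,q),
\]
so after summing over $i\in\{1,\ldots,[tn]\}^k$ the contribution of the pair $(p,q)$ is $[tn]^{b(p\vee q)}$, not $[tn]^{b(p)}$.

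Second, and more seriously, the asymptotic $W_{kn}(p,q)=n^{-k}(\delta_{p,q}+O(n^{-1}))$ is false for $S_n$ and $S_n^+$. The Gram matrix has diagonal entries $G_{kn}(p,p)=n^{b(p)}$, which vary with $p$, so the diagonal of its inverse behaves like $n^{-b(p)}$, not $n^{-k}$. The correct normalization is to write $\hat G(p,q)=n^{\,b(p\vee q)-(b(p)+b(q))/2}$; since $b(p\vee q)<\tfrac12(b(p)+b(q))$ for $p\neq q$, one gets $\hat G\to I$ and hence
\[
W_{kn}(p,q)=n^{-(b(p)+b(q))/2}\bigl(\delta_{p,q}+o(1)\bigr).
\]
With this corrected estimate the sum $\sum_{p,q}[tn]^{b(p\vee q)}W_{kn}(p,q)$ does converge to $\sum_{p\in D(k)}t^{b(p)}$, because each off-diagonal term carries the strictly negative exponent $b(p\vee q)-\tfrac12(b(p)+b(q))$. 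Your displayed computation with the unexplained ``$n^{k-b(p)}$ factor contributed by the normalization of $u^{\otimes k}$'' is an attempt to patch this discrepancy, but it does not correspond to any actual step in the argument.

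Finally, a minor point: the Poisson law $p_{1t}$ is not compactly supported, so your last sentence is incorrect as stated. It is, however, determined by its moments (the moment generating function is entire), so moment convergence still implies weak convergence.
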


The second result of this type concerns the hyperoctahedral group $H_n$. This is the symmetry group of the cube in $\mathbb R^n$. 

The hyperoctahedral group has a wreath product decomposition $H_n=\mathbb Z_2\wr S_n$, and its free version $H_n^+$ has a free wreath product decomposition $H_n^+=\mathbb Z_2\wr_*S_n^+$.

The laws of truncated characters can be computed by using wreath product techniques and the Weingarten formula, and we have the following result \cite{bbc}.

\begin{theorem}
We have the following formulae.
\begin{enumerate}
\item For $G=H_n$ with $n\to\infty$ we have ${\rm law}(\chi_t)\to\tilde{p}_{2t}$.
\item For $G=H_n^+$ with $n\to\infty$ we have ${\rm law}(\chi_t)\to\tilde{\pi}_{2t}$.
\end{enumerate}
\end{theorem}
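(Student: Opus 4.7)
The plan is to compute the moments of $\chi_t$ in each case and identify them with those of $\tilde p_{2t}$ and $\tilde\pi_{2t}$, in parallel with the proof of Theorem 10.2.

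For (1), I would use the signed-permutation description $H_n=\mathbb Z_2\wr S_n$: an element is a pair $(\epsilon,\sigma)$ with $\epsilon\in\{\pm 1\}^n$ and $\sigma\in S_n$, and the matrix coordinate $u_{ii}(\epsilon,\sigma)$ equals $\epsilon_i$ if $\sigma(i)=i$ and $0$ otherwise. Hence $\chi_t(\epsilon,\sigma)=\sum_{i=1}^{[tn]}\epsilon_i\mathbf 1_{\sigma(i)=i}$. Conditioning on the set of indices $i\le[tn]$ fixed by $\sigma$ and using independence of the $\epsilon_i$'s, the classical Poisson limit for permutation fixed points gives that this set has asymptotic size $\mathrm{Poisson}(t)$, so $\chi_t$ converges in law to a compound Poisson of rate $t$ with jump distribution $\tfrac12\delta_1+\tfrac12\delta_{-1}$. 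This equals the difference of two independent Poissons of parameter $t/2$, which by Definition 8.1 specialised at $s=2$, $w=-1$, is precisely $\tilde p_{2t}$.

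For (2), I would use Woronowicz's Tannakian framework together with the Weingarten formula, adapted to the free wreath product $H_n^+=\mathbb Z_2\wr_*S_n^+$. The key structural input, established in \cite{bbc}, is that $\mathrm{Fix}(u^{\otimes k})$ is spanned by the partition vectors indexed by noncrossing partitions of $\{1,\ldots,k\}$ whose blocks all have even size. The Weingarten formula then expresses the moments as
\[
\int_{H_n^+}\chi_t^{2k}=\sum_{\pi,\sigma\in NC_2(k)}t^{b(\sigma)}\,W_{2k,n}(\pi,\sigma),
\]
with odd moments vanishing for parity reasons. Standard asymptotics on the Weingarten function (as in \cite{bc2}) give $W_{2k,n}(\pi,\sigma)\to\delta_{\pi,\sigma}$ at leading order in $n$, so in the limit
\[
\int\chi_t^{2k}\longrightarrow\sum_{\sigma\in NC_2(k)}t^{b(\sigma)}.
\]
By Theorem 4.4 the right-hand side is the $k$-th moment of $\pi_{2t}$, and by Theorem 7.1 it coincides with the $2k$-th moment of $\tilde\pi_{2t}$, which gives the claim.

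The main obstacle lies in (2): one must rigorously establish the even-block noncrossing partition description of the intertwiners of $H_n^+$, and then carry out the asymptotic analysis of $W_{2k,n}$ to obtain diagonal concentration. Both ingredients are supplied by the free wreath product machinery developed in \cite{bbc}; the genuinely new input in the present theorem is the combinatorial identification of the resulting sum with the Bessel moment formula, via the $\tilde\pi_{2t}$--$\pi_{2t}$ squaring relation of Theorem 7.1.
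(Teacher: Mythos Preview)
The paper does not actually prove this theorem; it is quoted from \cite{bbc}, and the surrounding text merely indicates the method (``wreath product techniques and the Weingarten formula''). Your outline follows exactly that route, and is also the $s=2$ specialisation of the general proofs given later in the paper (Theorem~11.2 for the classical case, Theorem~12.1 for the quantum case). So the approach is the right one.

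One inaccuracy worth flagging: the displayed formula
\[
\int_{H_n^+}\chi_t^{2k}=\sum_{\pi,\sigma\in NC_2(k)}t^{b(\sigma)}\,W_{2k,n}(\pi,\sigma)
\]
is not correct as written. The exact Weingarten expansion of $\int\chi_t^{2k}$ involves the count $[tn]^{b(\pi\vee\sigma)}$ of compatible multi-indices, not $t^{b(\sigma)}$, and the Weingarten entries $W_{2k,n}(\pi,\sigma)$ do not tend to $\delta_{\pi,\sigma}$ but rather to $0$ (the diagonal entry behaves like $n^{-b(\pi)}$). It is only after combining these two factors that the off-diagonal contributions become subleading and the limit $\sum_{\pi}t^{b(\pi)}$ emerges. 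This is how the argument is run in \cite{bbc} and in Step~9 of Theorem~12.1.

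A minor stylistic difference: for the final identification in (2) you pass through Theorems~4.4 and~7.1 (the $NC_2$ moment formula for $\pi_{2t}$ and the squaring relation $\pi_{2t}=\mathrm{law}(\alpha^2)$ for $\alpha\sim\tilde\pi_{2t}$). The paper's general argument in Theorem~12.1 instead computes the free cumulants of $\alpha=w\alpha_1+w^2\alpha_2$ directly and matches them block by block with the partition count. Both routes are valid and equivalent; yours is arguably cleaner at $s=2$ since the needed moment formula is already available.
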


Summarizing, the groups $S_n,H_n$ and their free analogues $S_n^+,H_n^+$ provide models for the Bessel and free Bessel laws $\tilde{p}_{st},\tilde{\pi}_{st}$, at $s=1,2$. 

In what follows we will generalize these results, with a single two-fold statement (classical and quantum) which works for any $s\in\mathbb N$. 

Together with the additional results in \cite{bc1}, concerning the groups $O_n,U_n$ and their free versions $O_n^+,U_n^+$, this will justify the table in the introduction.

\section{The group $H_n^s$}

We discuss here the generalization of (1) in Theorem 10.2 and Theorem 10.3. We discuss as well the generalization of (2), with some preliminary facts.

A matrix is called monomial if it has exactly one nonzero entry in each row and each column. The basic examples are the permutation matrices.

\begin{definition}
$H_n^s=\mathbb Z_s\wr S_n$ is the group of monomial $n\times n$ matrices having as entries the $s$-roots of unity. 
\end{definition}

In other words, an element of $H_n^s$ is a permutation matrix, with each $1$ entry replaced by a $s$-root of unity. When identifying the group of $s$-roots of unity with $\mathbb Z_s$, this gives the wreath product decomposition in the above definition.

Observe that we have $H_n^s\subset U_n$, and that $H_n^1=S_n,H_n^2=H_n$.

\begin{theorem}
For $H_n^s$ with $n\to\infty$ we have ${\rm law}(\chi_t)\to\tilde{p}_{st}$.
\end{theorem}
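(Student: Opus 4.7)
The plan is to parametrize $g \in H_n^s$ as a pair $(e,\sigma) \in \mathbb{Z}_s^n \times S_n$, corresponding to the monomial matrix with $g_{ij} = w^{e_i}\delta_{\sigma(i),j}$. Then $u_{ii}(e,\sigma) = w^{e_i}\delta_{\sigma(i),i}$, and the truncated character decomposes as
$$\chi_t = \sum_{k=1}^{s} w^k X_k^{(n)}, \qquad X_k^{(n)} := \#\bigl\{1 \le i \le [tn] : \sigma(i)=i,\ e_i = k\bigr\}.$$
Under Haar measure on $H_n^s$ (which, since $H_n^s$ is finite, is just uniform counting probability), $\sigma$ is uniform on $S_n$, the exponents $e_1,\dots,e_n$ are i.i.d.\ uniform on $\mathbb{Z}_s$, and the two families are independent. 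Comparing with Definition 8.1, it therefore suffices to prove the joint convergence in distribution $(X_1^{(n)},\dots,X_s^{(n)}) \to (a_1,\dots,a_s)$, with $a_k$ independent Poisson random variables of parameter $t/s$; the theorem then follows by applying the continuous map $(x_1,\dots,x_s) \mapsto \sum_k w^k x_k$.

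I would verify this joint Poisson convergence by the method of factorial moments. Fix integers $r_1,\dots,r_s \ge 0$ and set $R := \sum_k r_k$. The product $\prod_k X_k^{(n)}(X_k^{(n)}-1)\cdots(X_k^{(n)}-r_k+1)$ expands as a sum over ordered tuples of $R$ pairwise distinct indices in $[1,[tn]]$ carrying a prescribed assignment of labels ($r_k$ of them labeled $k$). By independence of $\sigma$ and $e$, each such tuple contributes $(n-R)!/n!$ (the probability that a uniform $\sigma$ fixes $R$ prescribed points) times $s^{-R}$ (the probability that the $e_i$'s take the prescribed labels). Summing over the $[tn]!/([tn]-R)!$ admissible tuples gives
$$E\!\left[\prod_{k=1}^{s}\prod_{j=0}^{r_k-1}\!\bigl(X_k^{(n)}-j\bigr)\right] = \frac{[tn]!}{([tn]-R)!}\cdot\frac{(n-R)!}{n!}\cdot s^{-R} \ \xrightarrow[n\to\infty]{}\ \prod_{k=1}^{s}\Bigl(\frac{t}{s}\Bigr)^{r_k},$$
which is precisely the joint factorial moment of independent Poisson$(t/s)$ variables. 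Since such a product Poisson law is determined by its factorial moments, this yields the required joint convergence.

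In the special case $s=1$ this reproves the classical fact, already invoked in Theorem 10.2(1), that the number of fixed points of a uniform permutation lying in $[1,[tn]]$ is asymptotically Poisson$(t)$. For general $s$ the statement is its Poisson-thinning refinement: conditionally on the total count of fixed points in $[1,[tn]]$, the labels attached to those fixed points are i.i.d.\ uniform on $\mathbb{Z}_s$, so the thinned counts $X_k^{(n)}$ separate in the limit into independent Poisson$(t/s)$ variables.

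The one subtle point worth flagging is that, because $w$ is complex, the single-variable moments $E(\chi_t^k)$ alone would not determine the limit law; the crucial ingredient is the joint convergence of the integer counts $X_k^{(n)}$. The factorial-moment computation above delivers this joint convergence directly, which is why I prefer it over, say, computing mixed moments of $\chi_t$ and $\overline{\chi_t}$ separately.
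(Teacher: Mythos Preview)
Your argument is correct. The factorial-moment computation is clean: the key observation that indices from different label-classes are automatically distinct (since a single index cannot carry two labels) is what makes the count of admissible tuples exactly $[tn]!/([tn]-R)!$, and from there the limit $(t/s)^R$ and the identification with the joint factorial moments of independent Poisson$(t/s)$ variables is immediate.

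Your route is genuinely different from the paper's. The paper conditions on the total number $N$ of fixed points of $\sigma$ lying in $[1,[tn]]$, uses the classical fact that $N$ is asymptotically Poisson$(t)$, and observes that conditionally on $N=k$ the truncated character is a sum of $k$ i.i.d.\ uniform $s$-th roots of unity, so that $\lim_{n\to\infty}{\rm law}(\chi_t)=e^{-t}\sum_{k\ge 0}\frac{t^k}{k!}\,\rho^{*k}$ with $\rho$ uniform on the $s$-th roots. It then invokes the Poisson-limit Theorem~8.3 to recognise this compound Poisson law as $\tilde p_{st}$. Your approach instead splits the fixed points by label from the outset and proves joint convergence of the integer counts $(X_1^{(n)},\dots,X_s^{(n)})$ directly to independent Poisson$(t/s)$ variables, landing exactly on Definition~8.1 without passing through Theorem~8.3. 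What the paper's approach buys is the compound-Poisson picture $\tilde p_{st}=\pi_{t\rho}$ made explicit; what yours buys is a self-contained argument that does not rely on the auxiliary limit theorem, and your remark that joint convergence of the integer counts is the right object (rather than moments of the complex $\chi_t$ alone) is well taken.
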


\begin{proof}
For $s=1,2$ this is known from \cite{bc2}, \cite{bbc}, and we will use here the same method. We denote by $\rho$ the uniform measure on the $s$-roots of unity.

We work out first the case $t=1$. Since the probability for a random permutation to have exactly $k$ fixed points is $e^{-1}/k!$, we get:
$$\lim_{n\to\infty}{\rm law}(\chi_1)=e^{-1}\sum_{k=0}^\infty \frac{1}{k!}\,\rho^{*k}$$

On the other hand, we get from Theorem 8.3:
\begin{eqnarray*}
\tilde{p}_{s1}
&=&\lim_{n\to\infty}\left(\left(1-\frac{1}{n}\right)\delta_0+\frac{1}{n}\,\rho\right)^{*n}\\
&=&\lim_{n\to\infty}\sum_{k=0}^n\begin{pmatrix}n\cr k\end{pmatrix}\left(1-\frac{1}{n}\right)^{n-k}\frac{1}{n^k\,}\rho^{*k}\\
&=&e^{-1}\sum_{k=0}^\infty \frac{1}{k!}\,\rho^{*k}
\end{eqnarray*}

This gives the assertion for $t=1$. Now in the case $t>0$ arbitrary, we can use the same method, by performing the following modifications:
\begin{eqnarray*}
\lim_{n\to\infty}{\rm law}(\chi_t)
&=&e^{-t}\sum_{k=0}^\infty \frac{t^k}{k!}\,\rho^{*k}\\
&=&\lim_{n\to\infty}\left(\left(1-\frac{1}{n}\right)\delta_0+\frac{1}{n}\,\rho\right)^{*[tn]}\\
&=&\tilde{p}_{st}
\end{eqnarray*}

This finishes the proof.
\end{proof}

It remains to discuss the quantum group model for $\pi_{st}$. The quantum group will be a suitably chosen free version of $H_n^s$. 

\begin{definition}
The universal $C^*$-algebra $A_h^s(n)$ is defined with $n^2$ generators $u_{ij}$, and with the following relations:
\begin{enumerate}
\item $u=(u_{ij})$ and $\bar{u}=(u_{ij}^*)$ are unitaries.
\item $u_{ij}u_{ij}^*=u_{ij}^*u_{ij}=p_{ij}$ (projection). 
\item $u_{ij}^s=p_{ij}$.
\end{enumerate}
\end{definition}

In this definition the meaning of the second condition is that each $u_{ij}$ is normal, and that the elements $p_{ij}=u_{ij}u_{ij}^*$ are idempotents: $p_{ij}^2=p_{ij}$. Observe that each $u_{ij}$ is a normal partial isometry, in the $C^*$-algebra sense. 

We use the symmetric and hyperoctahedral Hopf algebras $A_s(n),A_h(n)$, associated to the free quantum groups $S_n^+,H_n^+$. See \cite{bc2}, \cite{bbc}.

\begin{proposition}
The algebra $A_h^s(n)$ has the following properties.
\begin{enumerate}
\item It is a Hopf $C^*$-algebra, with $u$ being a corepresentation.
\item Its maximal commutative quotient is $C(H_n^s)$.
\item $A_h^1(n)=A_s(n)$, $A_h^2(n)=A_h(n)$.
\end{enumerate}
\end{proposition}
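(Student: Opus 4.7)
The plan is to establish (1), (2), (3) in turn, each time invoking the universal property of $A_h^s(n)$. For (1), I would set $\Delta(u_{ij}) = \sum_k u_{ik}\otimes u_{kj}$, $\varepsilon(u_{ij}) = \delta_{ij}$, and define $S$ as the anti-homomorphism determined by $S(u_{ij}) = u_{ji}^*$. By universality, $\Delta$ and $\varepsilon$ extend to $*$-homomorphisms and $S$ to a $*$-antihomomorphism provided one verifies that the image matrices $(\Delta(u_{ij}))$, $(\varepsilon(u_{ij}))$, $(S(u_{ij}))$ satisfy relations (1)--(3) of Definition 11.3. The checks for $\varepsilon$ and $S$ are routine involutive manipulations of the defining relations; the substantive check is for $\Delta$, where one must show that the matrix with entries $\sum_k u_{ik}\otimes u_{kj}$ is unitary (together with its conjugate), that each entry is a normal partial isometry, and that its $s$-th power equals the corresponding source=range projection. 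This uses the orthogonality relations forced by unitarity of $u$ and $\bar u$. The Hopf algebra axioms then hold on generators and extend multiplicatively.

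For (2), I would pass to the maximal commutative quotient $B$ and examine its characters. A character $\varphi$ of $B$ sends each $u_{ij}$ to a scalar $x_{ij}\in\mathbb{C}$ with $|x_{ij}|^2 = \varphi(p_{ij})\in\{0,1\}$ (from (2)) and $x_{ij}^s = |x_{ij}|^2$ (from (3)), so $x_{ij}$ is either zero or an $s$-th root of unity. Unitarity of $(x_{ij})$ and $(\bar x_{ij})$ then forces exactly one nonzero entry in each row and each column, which is precisely the data of an element of $H_n^s$. Conversely, every such matrix defines a valid character, so the spectrum of $B$ is $H_n^s$ and $B = C(H_n^s)$.

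For (3), the key observation is that the partial isometry relation (2) forces $p_{ij}u_{ij} = u_{ij}p_{ij} = u_{ij}$, which combined with (3) yields
\[
u_{ij}^{s+1} = u_{ij}^s\cdot u_{ij} = p_{ij}u_{ij} = u_{ij}.
\]
By normality, the spectrum of $u_{ij}$ is contained in $\{0\}\cup\mu_s$, where $\mu_s$ denotes the group of $s$-th roots of unity. At $s=1$ this forces each $u_{ij}$ to be a self-adjoint projection, and relations (1)--(2) collapse to the magic unitary presentation of $A_s(n)$. At $s=2$ the spectrum lies in $\{-1,0,1\}\subset\mathbb{R}$, so each $u_{ij}$ is self-adjoint with $u_{ij}^2 = p_{ij}$, which is exactly the presentation of $A_h(n)$ used in \cite{bbc}.

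The main obstacle is the well-definedness of $\Delta$ in part (1): one must verify that the noncommutative sum $\sum_k u_{ik}\otimes u_{kj}$ still satisfies the partial isometry and $s$-th power relations in $A_h^s(n)\otimes A_h^s(n)$, and the cross terms appearing in the expansion of such products and powers do not vanish individually. This is the only genuinely computational step; parts (2) and (3) reduce to spectral arguments once the identity $p_{ij}u_{ij} = u_{ij}$ is available.
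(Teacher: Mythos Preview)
Your overall approach mirrors the paper's: define $\Delta,\varepsilon,S$ by the standard formulae and invoke universality for (1); identify the commutative quotient with $C(H_n^s)$ for (2); and specialize to $s=1,2$ for (3). Your character-theoretic argument for (2) and your spectral argument for (3) are valid and slightly more explicit than the paper's compact-group and ``routine comparison'' remarks, but they amount to the same thing.

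There is, however, one misconception in your final paragraph. You write that in verifying the relations for $V_{ij}=\sum_k u_{ik}\otimes u_{kj}$, ``the cross terms appearing in the expansion of such products and powers do not vanish individually.'' In fact they \emph{do} vanish, and this is precisely what makes the check clean. Unitarity of $u$ gives $\sum_k p_{ik}=1$ and $\sum_k p_{ki}=1$; since projections summing to $1$ in a $C^*$-algebra are pairwise orthogonal, $p_{ik}p_{il}=0$ for $k\neq l$ (and similarly on columns). Because each $u_{ij}$ is a normal partial isometry with $u_{ij}=p_{ij}u_{ij}=u_{ij}p_{ij}$, one gets $u_{ik}u_{il}=u_{ik}p_{ik}p_{il}u_{il}=0$ and $u_{ik}u_{il}^*=0$ for $k\neq l$. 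Hence
\[
V_{ij}V_{ij}^*=\sum_k p_{ik}\otimes p_{kj}=V_{ij}^*V_{ij},\qquad V_{ij}^s=\sum_k u_{ik}^s\otimes u_{kj}^s=\sum_k p_{ik}\otimes p_{kj},
\]
and the relations hold on the nose. So the ``main obstacle'' you flag is not an obstacle at all once the row/column orthogonality of the $p_{ij}$ is observed---which is exactly the fact the paper isolates and uses in its proof of both (2) and (3).
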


\begin{proof}
We use the notation $A=A_h^s(n)$.

(1) The comultiplication, counit and antipode can be constructed by using the universal property of $A$, according to the following formulae:
\begin{eqnarray*}
\Delta(u_{ij})&=&\sum_{k=1}^nu_{ik}\otimes u_{kj}\\
\varepsilon(u_{ij})&=&\delta_{ij}\\
S(u_{ij})&=&u_{ji}^*
\end{eqnarray*}

Indeed, the matrices $(\Delta u)_{ij}=\Sigma_ku_{ik}\otimes u_{kj}$, $(\varepsilon u)_{ij}=\delta_{ij}$, $(Su)_{ij}=u_{ji}^*$ satisfy the conditions in Definition 11.3, so we can define $\Delta,\varepsilon,S$ as above. This shows that we have a Hopf $C^*$-algebra in the sense of Woronowicz \cite{w1}, and we are done.

(2) The matrix coordinates of $H_n^s\subset U_n$ satisfy the relations in Definition 11.3, so we have a surjective morphism $A\to C(H_n^s)$. 

Consider the ideal $J\subset A$ generated by the relations $[u_{ij},u_{kl}]=0$. It is routine to check that $A/J$ is a Hopf algebra, with $\Delta,\varepsilon,S$ defined as above. Thus we have $A/J=C(G)$, where $G\subset U_n$ is a certain compact group, containing $H_n^s$. 

From the fact that $u$ is unitary we get that the matrix of projections $p=(p_{ij})$ has sum 1 on each row and each column. It follows that the entries $p_{ij}$ are pairwise orthogonal on rows and columns, and we get that $G$ consists of monomial matrices. Now from the relation $u_{ij}^s=p_{ij}$ we get $G\subset H_n^s$, and we are done. 

(3) This follows from a routine comparison between Definition 11.3 and the definitions in \cite{bbc}, by using the above-mentioned fact that when a number of projections sum up to $1$, they are pairwise orthogonal.
\end{proof}

Summarizing, $A_h^s(n)$ appears to be the natural candidate for a model for $\pi_{st}$. We will prove in the next section that indeed it is so.

We should mention that $A_h^s(n)$ has a number of other interesting properties, not to be investigated here. One can prove for instance that we have a decomposition $A_h^s(n)=C(\mathbb Z_s)*_wA_s(n)$, analogous to the decomposition $H_n^s=\mathbb Z_s\wr S_n$. Here $*_w$ is a free wreath product in the sense of Bichon \cite{bic}, and the proof is as in \cite{bbc}, by using a suitable reformulation of Definition 11.3, as a sudoku type condition.

\section{Integration over $A_h^s(n)$}

We compute now the asymptotic laws of truncated characters for the algebra $A_h^s(n)$. The integration is with respect to the Haar functional, known to exist by general results of Woronowicz in \cite{w1}. 

\begin{theorem}
For $A_h^s(n)$ with $n\to\infty$ we have ${\rm law}(\chi_t)\to\tilde{\pi}_{st}$.
\end{theorem}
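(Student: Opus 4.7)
I would mirror the argument of Theorem 11.2, but upgraded to the free probabilistic setting: compute the moments $\int\chi_t^K$ on $A_h^s(n)$ by a Weingarten formula derived from Woronowicz's Tannakian duality \cite{w1}, identify the $n\to\infty$ limit as a generating polynomial indexed by $NC_s$, and then match this with the moments of $\tilde{\pi}_{st}$ coming from Sections 4, 5 and 7.

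\textbf{Steps.} First I would determine the intertwiner spaces of the fundamental corepresentation $u$ of $A_h^s(n)$. The defining relations of Definition 11.3 ($(p_{ij})$ magic, $u_{ij}$ normal, $u_{ij}^s=p_{ij}$) are tailored exactly so that ${\rm Fix}(u^{\otimes K})$ vanishes unless $s\mid K$, and in that case is spanned by vectors labelled by partitions in $NC_s(K/s)$, subject to the sign compatibility between $u$- and $u^*$-legs; the $s=2$ instance of this is treated in \cite{bbc}, and the general case is a direct extension. Given this Tannakian description, the usual Weingarten bookkeeping expands $\chi_t^K=\sum u_{i_1 i_1}\cdots u_{i_K i_K}$ with indices $i_j\in\{1,\ldots,[tn]\}$ and shows that, as $n\to\infty$, the leading term comes from diagonal pairings and yields
$$\lim_{n\to\infty}\int\chi_t^K=\sum_{\pi\in NC_s(K/s)}t^{b(\pi)}$$
when $s\mid K$, and $0$ otherwise; the factor $t^{b(\pi)}$ arises because each block of $\pi$ contributes one free index choice in $\{1,\ldots,[tn]\}$, normalized against $n^{b(\pi)}$ from the Weingarten matrix. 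Finally, to conclude, I would use the $R$-transform computation in the proof of Theorem 7.1, which shows that $\tilde{\pi}_{st}$ has vanishing $K$-th moment for $s\nmid K$ while its $(sk)$-th moment equals the $k$-th moment of $\pi_{st}$ (since by Theorem 7.1, $\pi_{st}={\rm law}(X^s)$ for $X\sim\tilde{\pi}_{st}$); by Theorem 4.4 the latter is exactly $\sum_{\pi\in NC_s(k)}t^{b(\pi)}$, matching the Weingarten limit.

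\textbf{Main obstacle.} The hard step is the Tannakian identification of ${\rm Fix}(u^{\otimes K})$ with $NC_s(K/s)$: that every $NC_s$-diagram furnishes an intertwiner is routine, but the reverse inclusion needs a careful comparison between $A_h^s(n)$ and the universal algebra generated by the $NC_s$ diagrams, generalizing the $s=2$ argument of \cite{bbc}. A cleaner alternative, closer in spirit to Section 11, is to use the free wreath product decomposition $A_h^s(n)=C(\mathbb Z_s)*_w A_s(n)$ indicated at the end of Section 11: by Theorem 10.2(2) the truncated character of $A_s(n)$ tends to the free Poisson $\pi_{1t}$, the free wreath product with $C(\mathbb Z_s)$ translates at the character level into a compound free Poisson construction driven by the uniform measure $\rho$ on $s$-th roots of unity, and Theorem 7.3 then identifies the resulting limit as $\tilde{\pi}_{st}$. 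This route bypasses the full combinatorial computation and relies only on ingredients already developed in the paper.
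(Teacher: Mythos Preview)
Your outline follows the same Tannaka--Weingarten strategy as the paper, but there is a genuine gap at the level of what needs to be matched. For $s\geq 3$ the fundamental corepresentation $u$ is \emph{not} self-adjoint, so $\chi_t=\sum u_{ii}$ is not self-adjoint either, and on the target side $\tilde{\pi}_{st}={\rm law}(\sum w^k\alpha_k)$ is the $*$-distribution of a non-normal variable. Consequently the collection of plain moments $\int\chi_t^K$ does not determine ${\rm law}(\chi_t)$; one must compute and match all mixed $*$-moments $\int\chi_t^{e_1}\cdots\chi_t^{e_k}$ with $e_i\in\{1,*\}$. Your proposal explicitly restricts to $\int\chi_t^K$ and to the fixed-point spaces ${\rm Fix}(u^{\otimes K})$, and the matching via Theorem 4.4 and the relation $m_{sk}(\tilde{\pi}_{st})=m_k(\pi_{st})$ likewise only pins down the ordinary moments. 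That is enough at $s=1,2$ (where $u=\bar u$), but not beyond.

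What the paper actually does is index the intertwiner spaces by colored words $a$ in $u,\bar u$: it builds a chain $A_k(n)\to A_h^\infty(n)\to A_h^s(n)\to A_s(n)$, identifies via Tannakian duality the extra generators needed at each stage (a diagram $P$ enforcing normality, then a diagram $Z$ enforcing $u_{ij}^s=p_{ij}$), and shows that $CA_h^s(n)$ is spanned by noncrossing diagrams whose blocks carry the same number of $u$'s and $\bar u$'s modulo $s$. Your identification ${\rm Fix}(u^{\otimes K})\simeq{\rm span}\,NC_s(K/s)$ is exactly the special case $a=u^{\otimes K}$ of this description, but the general colored description is what feeds the Weingarten formula for mixed $*$-moments. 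On the probabilistic side the paper then matches $\sum_{p\in P_h^s(a)}t^{|p|}$ with the $*$-moments of $\sum w^k\alpha_k$ by a direct free-cumulant computation, rather than by passing through $\pi_{st}$ and Theorem 4.4.

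Your alternative via the free wreath product $A_h^s(n)=C(\mathbb Z_s)*_wA_s(n)$ is appealing and might well give a shorter proof, but note that the paper only \emph{states} this decomposition without proof, and the claim that ``free wreath product with $C(\mathbb Z_s)$ translates at the character level into a compound free Poisson with law $\rho$'' is itself the substantive point: establishing it rigorously (for truncated characters, in the $*$-distribution sense) would require essentially the same categorical analysis you are trying to bypass.
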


\begin{proof}
We use a standard method, developed in \cite{bc1}, \cite{bc2}, \cite{bbc}, \cite{ba}. The general idea is that at $s=1,2$ the result is known from \cite{bc2}, \cite{bbc}, so we can use an extension of the proof there. The main problem at $s\geq 3$ comes from the fact that the fundamental corepresentation $u$ is no longer self-adjoint, so the underlying combinatorial objects will be indexed by elements of $\mathbb N*\mathbb N$, rather than by numbers in $\mathbb N$. In order to deal with this problem, we use methods from \cite{bc1}, \cite{ba}.

The proof uses tensor categories, denoted $C$, and has 9 steps:
\begin{enumerate}
\item We introduce three auxiliary algebras: $A_k(n),A_h^\infty(n),A_s(n)$.
\item We discuss the relation between the associated categories.
\item We describe the passage $CA_k(n)\to CA_h^\infty(n)$.
\item We compute $CA_h^\infty(n)$.
\item We describe the passage $CA_h^\infty(n)\to CA_h^s(n)$.
\item We compute $CA_h^s(n)$.
\item We discuss the integration formula for characters.
\item We prove the result for $t=1$.
\item We prove the result for any $t>0$.
\end{enumerate}

As already mentioned, the method is quite standard, so we will insist on technical details only. In fact, the only problem comes from the overall level of complexity, which is higher than in the previous papers \cite{bc1}, \cite{bc2}, \cite{bbc}, \cite{ba}. Let us also mention that in the last part of the proof, the cumulant computations can be probably deduced as well from general results of Lehner in \cite{le}. 

{\bf Step 1.} Let $A_h^\infty(n)$ be the algebra defined as $A_h^s(n)$, but with the condition (3) in Definition 11.3 missing. That is, $A_h^\infty(n)$ is the algebra generated by $n^2$ normal partial isometries $u_{ij}$, such that $u=(u_{ij})$ and $\bar{u}=(u_{ij}^*)$ are unitaries.

Observe that we have an arrow as follows, for any $s\in\mathbb N$:
$$A_h^\infty(n)\to A_h^s(n)$$

By arguing like in the proof of Proposition 11.4 we get that $A_h^\infty(n)$ is a Hopf algebra, having as maximal commutative quotient the algebra of functions on the group $H_n^\infty=\mathbb T\wr S_n$, consisting of unitary monomial matrices.

Consider also the algebra $A_k(n)$ generated by $n^2$ variables $u_{ij}$, having the property that $u=(u_{ij})$ and $\bar{u}=(u_{ij}^*)$ are unitaries, and that the relation $ab^*=a^*b=0$ holds for $a,b$ distinct entries on the same row or column of $u$.

Once again by arguing like in the proof of Proposition 11.4, we get that $A_k(n)$ is a Hopf algebra, having $C(H_n^\infty)$ as maximal commutative quotient. See \cite{ba}.

Observe that we have an arrow as follows:
$$A_k(n)\to A_h^\infty(n)$$

This follows indeed from the orthogonality condition on the supporting projections $p_{ij}$, obtained in the proof of Proposition 11.4.

The fact that $A_k(n),A_h^\infty(n)$ have the same maximal commutative quotient might seem a bit surprising. The point is that when trying to liberate the commutative Hopf algebra $C(H_n^\infty)$, the normality condition of the elements $u_{ij}$ can be kept or not. This is why we end up with two different algebras.

Finally, consider the algebra $A_s(n)=A_h^1(n)$. This is Wang's quantum permutation algebra \cite{wa}, which can be also described as being the quotient of $A_k(n)$ by the relations making each $u_{ij}$ a projection.

Observe that we have an arrow as follows:
$$A_h^s(n)\to A_s(n)$$

Summarizing, the algebra $A_h^s(n)$ under investigation, and its $s=\infty$ version, are part of the following sequence:
$$A_k(n)\to A_h^\infty(n)\to A_h^s(n)\to A_s(n)$$

The point is that the algebras on the left and on the right are well understood, and this can be used for studying the algebras in the middle. 

{\bf Step 2.} We denote by $CA$ the tensor category associated to a pair $(A,u)$ as in Woronowicz's paper \cite{w1}. That is, the objects are the tensor products between $u$ and $\bar{u}$, and the arrows are the intertwiners between them. Observe that in the case $u=\bar{u}$, the objects are just the tensor powers of $u$. 

Since applying morphisms increases the spaces of intertwiners, we have embeddings of tensor categories as follows:
$$CA_k(n)\subset CA_h^\infty(n)\subset CA_h^s(n)\subset CA_s(n)$$

The result that we want to prove is of asymptotic nature, so we can make the assumption $n\geq 4$. Now with this condition in hand, it is well-known that $CA_s(n)$ is the Temperley-Lieb category of index $n$. That is, for any $k,l\in\mathbb N$, the space $Hom(u^{\otimes k},u^{\otimes l})$ can be identified with the abstract vector $D_s(k,l)$ spanned by the Temperley-Lieb diagrams between $2k$ points and $2l$ points, and the categorical operations in $CA_s(n)$ are the usual planar operations in $D_s$ (with the rule that a closed loop corresponds to a multiplicative factor $n$). See e.g. \cite{bbc}.

Also, it is known that the subcategory $CA_k(n)\subset CA_s(n)$ is spanned by a certain subset of diagrams $D_k\subset D_s$, constructed in the following way. For $a,b$ tensor words in $u,\bar{u}$ and for a diagram $T\in D_s(|a|,|b|)$, where $|.|$ is the lenght of words, we have $T\in D_k(a,b)$ provided that the following happens: when putting $a,b$ on the two rows of points of $T$, with the replacements $u\to xy,\bar{u}\to yx$, where $x,y$ are two colors, the strings of $T$ have to match the colors.

Summarizing, we have a diagrammatic description of the categories on the left and on the right, and we want to compute the categories in the middle:
$${\rm span}(D_k)\subset CA_h^\infty(n)\subset CA_h^s(n)\subset {\rm span}(D_s)$$

 The idea would be to prove that the categories in the middle are spanned by certain sets of diagrams, say $D_h^\infty$ and $D_h^s$. 

{\bf Step 3.} In order to compute $D_h^\infty$, we use Woronowicz's Tannakian duality \cite{w2}, which allows one to translate algebraic relations into categorical relations. We know that $A_h^\infty(n)$ is the quotient of $A_k(n)$ by the relations making the elements $u_{ij}$ normal, so what we have to do first is to find a diagrammatic formulation of these normality conditions. Consider the following diagram in $D_s(2,2)$:
$$P=|{\:}^\cup_\cap\,|$$

According to the general rules in \cite{bbc} for diagrammatic calculus for $A_s(n)$, we have the following formula for this diagram, viewed as an operator:
$$P=\sum_ie_{ii}\otimes e_{ii}$$

Now let $u$ be the fundamental corepresentation of $A_k(n)$. We have:
\begin{eqnarray*}
(P\otimes 1)(u\otimes\bar{u})
&=&\left(\sum_ie_{ii}\otimes e_{ii}\otimes 1\right)\left(\sum_{ijkl}e_{ij}\otimes e_{kl}\otimes u_{ij}u_{kl}^*\right)\\
&=&\sum_{ijl}e_{ij}\otimes e_{il}\otimes u_{ij}u_{il}^*\\
&=&\sum_{ij}e_{ij}\otimes e_{ij}\otimes u_{ij}u_{ij}^*
\end{eqnarray*}

Once again by using the defining relations for $A_k(n)$, we have:
\begin{eqnarray*}
(\bar{u}\otimes u)(P\otimes 1)
&=&\left(\sum_{ijkl}e_{ij}\otimes e_{kl}\otimes u_{ij}^*u_{kl}\right)\left(\sum_je_{jj}\otimes e_{jj}\otimes 1\right)\\
&=&\sum_{ijk}e_{ij}\otimes e_{kj}\otimes u_{ij}^*u_{kj}\\
&=&\sum_{ij}e_{ij}\otimes e_{ij}\otimes u_{ij}^*u_{ij}
\end{eqnarray*}

We conclude that the $n^2$ normality conditions for the generators $u_{ij}\in A_k(n)$ are equivalent to the following condition:
$$P\in Hom(u\otimes\bar{u},\bar{u}\otimes u)$$

Now by applying Tannakian duality, we get:
$$CA_h^\infty(n)={\rm span}<D_k,P>$$

Thus we have proved a result announced in Step 2, namely that $CA_h^\infty(n)$ is spanned by diagrams. In the sense of \cite{bbc}, this means that $A_h^\infty(n)$ is free.

{\bf Step 4.} We compute now explicitely $CA_h^\infty(n)$.

We define a subset $D_h^\infty\subset D_s$ in the following way. For $a,b$ tensor words in $u,\bar{u}$ and for a diagram $T\in D_s(|a|,|b|)$, where $|.|$ is the lenght of words, we have $T\in D_h^\infty(a,b)$ provided that the following happens: when collapsing consecutive neighbors of $T$, as to get a noncrossing partition $\tilde{T}$, then putting the words $a,b$ on the points of $\tilde{T}$, each block has the same number of $u$ and $\bar{u}$.

Here the collapsed diagram $\tilde{T}$ is best thought as a being noncrossing partition of points on a circle (with a marked point). Another approach is by using the formalism of annular noncrossing partitions of Mingo and Nica \cite{mn}. 

We have by definition embeddings as follows:
$$D_k\subset D_h^\infty\subset D_s$$

Consider now the diagram $P=|{\:}^\cup_\cap\,|$. When performing the collapsing operation we get the diagram ${\rm H}$, having a single block. Now when putting the tensor words $u\otimes\bar{u}$ and $\bar{u}\otimes u$ on the points of ${\rm H}$, this unique block contains two $u$'s and two $\bar{u}$'s, so the above condition is satisfied. That is, we have:
$$P\in D_h^\infty(u\otimes\bar{u},\bar{u}\otimes u)$$

It is routine to check that each diagram in $D_h^\infty$ decomposes as a product of diagrams in $D_k$ and of diagrams of the following type:
$$||\ldots|{\:}^\cup_\cap\,|\ldots||$$

These latter diagrams being tensor products of $P$ with the identity, we have $D_h^\infty=<D_k,P>$. Now by combining this with the result in Step 3, we get:
$$CA_h^\infty(n)={\rm span}(D_h^\infty)$$

Summarizing, we have a now a fully satisfactory description of $CA_h^\infty(n)$.

{\bf Step 5.} We compute now the category $CA_h^s$, for arbitrary values of $s$. We use the same idea as in Step 3. Consider the following diagram in $D_s(0,s+2)$:
$$Z=|^{\!-\!\!-\!\!-\!\!-\!\!-\!\!-\!\!-\!\!-\!\!-\!}|\hskip-14mm{\ }_{\cap\cap\ldots\cap\cap}$$

When viewed as an operator, this diagram is uniquely determined by its value $\xi=Z(1)$, which is a vector in $(\mathbb C^n)^{\otimes s+2}$, given by the following formula:
$$\xi=\sum_{i}e_i^{\otimes s+2}$$

Now let $u$ be the fundamental corepresentation of $A_h^\infty(n)$. We have:
\begin{eqnarray*}
(u^{\otimes s+1}\otimes\bar{u})(\xi\otimes 1)
&=&\sum_{i_1\ldots i_nj}e_{i_1}\otimes\ldots\otimes e_{i_{s+2}}\otimes u_{i_1j}\ldots u_{i_{s+1}j}u_{i_{s+2}j}^*\\
&=&\sum_{ij}e_i\otimes\ldots\otimes e_i\otimes u_{ij}^{s+1}u_{ij}^*\\
&=&\sum_ie_i^{\otimes s+2}\otimes\left(\sum_ju_{ij}^{s+1}u_{ij}^*\right)
\end{eqnarray*}

This shows that $\xi$ is a fixed vector of the corepresentation $u^{s+1}\otimes\bar{u}$ if and only if the following condition is satisfied, for any $i$:
$$\sum_ju_{ij}^{s+1}u_{ij}^*=1$$

Now since for $i$ fixed the supporting projections $p_{ij}=u_{ij}u_{ij}^*$ are pairwise orthogonal and sum up to 1, it is routine to check that this condition is equivalent to $u_{ij}^s=p_{ij}$, for any $j$. Thus, by getting back to the diagram $Z$, the collection of conditions $u_{ij}^s=p_{ij}$ on the generators $u_{ij}\in A_h^\infty(n)$ is equivalent to:
$$Z\in Hom(1,u^{\otimes s+1}\otimes\bar{u})$$

Now by applying Tannakian duality, we get:
$$CA_h^s(n)={\rm span}<D_h^\infty,Z>$$

Thus we have reached to a similar conclusion to the one at the end of Step 3.

{\bf Step 6.} We compute now explicitely $CA_h^s(n)$. Let us point out first that this category is already known at $s=1,2,\infty$, from \cite{bc2}, \cite{bbc} and from Step 4. 

We define a subset $D_h^s\subset D_s$ in the following way. For $a,b$ tensor words in $u,\bar{u}$ and for a diagram $T\in D_s(|a|,|b|)$, where $|.|$ is the lenght of words, we have $T\in D_h^s(a,b)$ provided that the following happens: when putting the words $a,b$ on the points of the corresponding noncrossing partition $\tilde{T}$, each block has the same number of $u$ and $\bar{u}$, modulo $s$.

We have by definition embeddings as follows:
$$D_h^\infty\subset D_h^s\subset D_s$$

Consider now the diagram $Z$. When performing the collapsing operation we get the diagram $I_{s+2}$ having $s+2$ legs and a single block. Now when putting the word $u^{\otimes s+1}\otimes\bar{u}$ on the points of $I_{s+2}$, this unique block contains $s+1$ copies of $u$ and one copy of $\bar{u}$, so the above condition is satisfied. That is, we have:
$$Z\in D_h^s(1,u^{\otimes s+1}\otimes\bar{u})$$

It is routine to check that each diagram in $D_h^s$ decomposes as a product of diagrams in $D_h^\infty$ and of diagrams of the following type:
$$||\ldots|^{\!-\!\!-\!\!-\!\!-\!\!-\!\!-\!\!-\!\!-\!\!-\!}|\hskip-14mm{\ }_{\cap\cap\ldots\cap\cap}\ \ldots||$$

These latter diagrams being tensor products of $Z$ with the identity, we have $D_h^s=<D_h^\infty,Z>$. Now by combining this with the result in Step 5, we get:
$$CA_h^s(n)={\rm span}(D_h^s)$$

This finishes the categorial computations of the present proof.

{\bf Step 7.} We are now in position of proving the integration results.

We recall from the previous step that the space of fixed vectors of a $k$-fold tensor product $a$ between $u,\bar{u}$ can be identified with the abstract vector space spanned by the set $P_h^s(a)$ of noncrossing partitions of $\{1,\ldots,k\}$, having the following property: when putting the word $a$ on the points of the partition, each block has to contain the same number of $u$ and $\bar{u}$, modulo $s$.

By  \cite{w1} the $*$-moments of $\chi_1$ are the number of fixed points of the tensor products between $u$ and $\bar{u}$, which are in turn equal to the number of diagrams in $P_h^s$. That is, if $e_1,\ldots,e_k\in\{1,*\}$ are exponents, and $a=(u_{ij}^{e_1})\otimes\ldots\otimes (u_{ij}^{e_k})$ is the corresponding tensor product between $u,\bar{u}$, then:
\begin{eqnarray*}
\int\chi_1^{e_1}\ldots\chi_1^{e_k}
&=&\int\chi(a)\\
&=&\dim Hom(1,a)\\
&=&\# P_h^s(a)
\end{eqnarray*}

The idea will be that, by performing a computation using free cumulants, these numbers will turn to be as well the $*$-moments of $\tilde{\pi}_{s1}$.

{\bf Step 8.} We complete now the proof at $t=1$.

We recall from section 7 that if $\alpha_1,\ldots,\alpha_s$ are free free Poisson variables of parameter $1/s$ and $w=e^{2\pi i/s}$, then the following variable has law $\tilde{\pi}_{s1}$: 
$$\alpha=\sum_{l=1}^s w^l\alpha_l$$

We can compute the $*$-moments of this variable by using free cumulants. With standard notations from \cite{ns}, we have:
\begin{eqnarray*}
\int\alpha^{e_1}\ldots\alpha^{e_k}
&=&\sum_{p\in NC(k)}K_p(\alpha^{e_1},\ldots,\alpha^{e_k})\\
&=&\sum_{p\in NC(k)}\sum_{i_1\ldots i_k=1}^sK_p((w^{i_1}\alpha_{i_1})^{e_1},\ldots,(w^{i_k}\alpha_{i_k})^{e_k})\\
&=&\sum_{p\in NC(k)}\sum_{i_1\ldots i_k=1}^sw^{i_1\varepsilon_1+\ldots+i_k\varepsilon_k}K_p(\alpha_{i_1},\ldots,\alpha_{i_k})
\end{eqnarray*}

Here the signs $\varepsilon_i\in\{1,-1\}$ come from the exponents $e_i\in\{1,*\}$.

We use now Speicher's result that the mixed cumulants vanish \cite{s1}. This shows that for a nonzero term in the above sum, the corresponding indices $i_1,\ldots,i_k$ must be constant over the blocks of $p$. Now by factoring each cumulant on the right as a product over the blocks $b=\{b_1,\ldots,b_r\}$ of $p$, we get:
\begin{eqnarray*}
\int\alpha^{e_1}\ldots\alpha^{e_k}
&=&\sum_{p\in NC(k)}\prod_{b\in p}\sum_{i=1}^sw^{i\varepsilon_{b_1}+\ldots+i\varepsilon_{b_r}}K_b(\alpha_1,\ldots,\alpha_1)\\
&=&\sum_{p\in NC(k)}\prod_{b\in p}\sum_{i=1}^s\left(w^{\varepsilon_{b_1}+\ldots+\varepsilon_{b_r}}\right)^iK_b(\alpha_1,\ldots,\alpha_1)\\
&=&\sum_{p\in NC(k)}\prod_{b\in p}(s|\varepsilon_{b_1}+\ldots+\varepsilon_{b_r})sK_b(\alpha_1,\ldots,\alpha_1)
\end{eqnarray*}

Here the symbol $(s|m)$ is given by $(s|m)=1$ if $s|m$, and $(s|m)=0$ if not.

Now, given a partition $p$, in order for its contribution to the above $*$-moment to be nonzero, we must have $s|\varepsilon_{b_1}+\ldots+\varepsilon_{b_r}$ for any block $b\in p$. But this is the same as saying that when putting the word $a$ on the points of $p$, each block of $p$ contains the same number of $u$'s and $\bar{u}$'s, modulo $s$, which is by definition equivalent to $p\in P_h^s(a)$. Thus we have:
$$\int\alpha^{e_1}\ldots\alpha^{e_k}
=\sum_{p\in P_h^s(a)}\prod_{b\in p}sK_b(\alpha_1,\ldots,\alpha_1)$$

Now by general results in \cite{ns}, each of the numbers on the right is $s(1/s)=1$, so the above $*$-moment equals $\# P_h^s(a)$. This finishes the proof at $t=1$. 

Observe that we have ${\rm law}(\chi_1)=\tilde{\pi}_{s1}$, independently of $n\geq 4$. The fact that the convergence is stationary is not surprising, in view of Proposition 10.1 (2).

{\bf Step 9.} We discuss now the general case $t>0$. Here the convergence ${\rm law}(\chi_t)\to\tilde{\pi}_{st}$ will come no longer from a stationary sequence, and we have to use a more technical argument, based on the Weingarten formula:
$$\int u_{i_1j_1}^{e_1}\ldots u_{i_kj_k}^{e_k}=\sum_{p,q\in P_h^s(a)}\delta_{pi}\delta_{qj}W_{an}(p,q)$$

Here we use exponents $e_1,\ldots,e_k\in\{1,*\}$, and $a=(u_{ij}^{e_1})\otimes\ldots\otimes (u_{ij}^{e_k})$ is the corresponding tensor product between $u,\bar{u}$. The delta symbols, equal to 0 or 1, represent the couplings between diagrams and multi-indices, and $W_{an}$ is the Weingarten matrix, obtained as inverse of the Gram matrix. See \cite{bc1}, \cite{bbc}.

Now once again by general arguments developed in \cite{bc1}, \cite{bc2}, \cite{bbc}, the Weingarten formula leads to the following formula for the asymptotic $*$-moments of $\chi_t$:
$$\lim_{n\to\infty}\int\chi_t^{e_1}\ldots\chi_t^{e_k}=\sum_{p\in P_h^s(a)}t^{|p|}$$

Here we use, as above, exponents $e_1,\ldots,e_k\in\{1,*\}$, along with the corresponding tensor product $a=(u_{ij}^{e_1})\otimes\ldots\otimes (u_{ij}^{e_k})$ between $u,\bar{u}$. As for the exponent $|p|$ on the right, this is the number of blocks of $p$. See \cite{bbc}.

At the level of modified free Bessel laws now, what changes when making the replacement $\tilde{\pi}_{s1}\to\tilde{\pi}_{st}$ is that the variables $\alpha_1,\ldots,\alpha_s$ become now free Poisson variables of parameter $t/s$. Thus in the cumulant computation in Step 8 what changes is the contribution of the partitions: instead of a product of numbers $s(1/s)=1$, we have now a product of numbers $s(t/s)=t$. We get:
$$\int\alpha^{e_1}\ldots\alpha^{e_k}=\sum_{p\in P_h^s(a)}t^{|p|}$$

Summarizing, in both computations each partition contributes now with an additive factor $t^b$, where $b$ is the number of blocks, and we are done.
\end{proof}

\end{document}